\documentclass[a4paper,11pt]{amsart}
\usepackage{amsmath,amsthm,amssymb}
\usepackage{comment}
\newcommand{\mychoice}[3]{#1
}
\mychoice{
\newcommand{\plabel}[1]{ \label{#1}}
\newcommand{\gbibitem}[1]{ \bibitem{#1}}
\newcommand{\snewpage}{}

\specialcomment{commentx}{}{}\excludecomment{commentx}
\specialcomment{commenty}{}{}\excludecomment{commenty}
\specialcomment{commentz}{}{}

}{
\usepackage{xcolor}
\newcommand{\plabel}[1]{ \label{#1}\rlap{\smash{${}^{^{[#1]}}$}}}
\newcommand{\gbibitem}[1]{ \bibitem{#1}\rlap{\smash{${}^{^{[#1]}}$}}}
\newcommand{\snewpage}{\newpage}

\newenvironment{commentx}{\color{magenta} }{\color{black} }
\newenvironment{commenty}{\color{blue} }{\color{black} }

}{
\usepackage{xcolor}
\newcommand{\plabel}[1]{ \label{#1}}
\newcommand{\gbibitem}[1]{ \bibitem{#1}}
\newcommand{\snewpage}{}

\specialcomment{commentx}{}{}\excludecomment{commentx}

\specialcomment{commentz}{}{}\excludecomment{commentz}

}
\DeclareMathOperator{\sgn}{sgn}

\DeclareMathOperator{\Id}{Id}
\DeclareMathOperator{\tr}{tr}

\DeclareMathOperator{\artanh}{artanh}
\DeclareMathOperator{\arcosh}{arcosh}

\DeclareMathOperator{\arsinh}{arsinh}
\DeclareMathOperator{\Rea}{Re}\DeclareMathOperator{\Ima}{Im}
\DeclareMathOperator{\CR}{CR}
\DeclareMathOperator{\DW}{DW}

\theoremstyle{definition}
\newtheorem{point}{}[section]

\newtheorem{disc}[point]{Discussion}
\newtheorem{remark}[point]{Remark}
\newtheorem{example}[point]{Example}
\theoremstyle{plain}

\newtheorem{lemma}[point]{Lemma}

\newtheorem{cor}[point]{Corollary}
\newtheorem{theorem}[point]{Theorem}
\newcommand{\bem}{\begin{bmatrix}}
\newcommand{\eem}{\end{bmatrix}}

\DeclareMathOperator{\arcoth}{arcoth}

\newcommand{\eqed}{
\pushQED{\qed}
\qedhere
\popQED
}
\newcommand{\eqedexer}{
\renewcommand{\qedsymbol}{$\diamondsuit$}
\pushQED{\qed}
\qedhere
\popQED
\renewcommand{\qedsymbol}{$\Box$}
}

\newcommand{\qedexer}{  \renewcommand{\qedsymbol}{$\diamondsuit$} \qed \renewcommand{\qedsymbol}{$\Box$}}
\newcommand{\qedremark}{  \renewcommand{\qedsymbol}{$\triangle$} \qed \renewcommand{\qedsymbol}{$\Box$}}

\newcommand{\marginextend}[1]{ \addtolength{\oddsidemargin}{-#1}  \addtolength{\evensidemargin}{-#1}
  \addtolength{\textwidth}{#1}\addtolength{\textwidth}{#1}}
\newcommand{\updownextend}[1]{ \addtolength{\topmargin}{-#1}  \addtolength{\textheight}{#1}
\addtolength{\textheight}{#1}}
\marginextend{1cm}
\updownextend{0cm}
\allowdisplaybreaks[4]
\title{The elliptical range theorem for the conformal range}
\author{Gyula Lakos}
\email{gyula.lakos@uni-miskolc.hu}
\address{Institute of Mathematics, Department of Analysis, University of Miskolc, H-3515 Miskolc-Egyetemváros, Hungary}
\keywords{ Davis--Wielandt shell, numerical range, conformal range of operators, hyperbolic geometry, elementary analytic geometry}
\subjclass[2020]{Primary: 15A60, Secondary: 51M10.}
\begin{document}
\begin{abstract}
The conformal range (or the real Davis--Wielandt shell), which is a particular planar projection of the Davis--Wielandt shell,
 can be considered as the hyperbolic version of the numerical range; i.~e. it is a ``field of values''
 which can be interpreted as a subset of the asymptotically closed hyperbolic plane.
Here we explain  the analogue of  the elliptical range theorem of $2\times2$ complex matrices for the conformal range.
If we consider the conformal range in the Beltrami--Cayley--Klein (BCK) model of the asympotically closed
 hyperbolic plane (which model just looks like as the closed unit disk on the Euclidean plane
  with the boundary points being the asymptotical points), then we find the following:
The conformal range of a $2\times2$ complex matrix is a possibly degenerate elliptical disc in the
 BCK model  but one which must avoid the particular asymptotical point $(0,1)$.
In the viewpoint of the synthetic (that is ``innerly interpreted'') geometry of the hyperbolic plane,
 these sets corresponds to the limiting objects of $h$-elliptical disks  but ``keeping away'' from the particular asymptotic point $(0,1)$.
Here `$h$' indicates that the object is interpreted from the viewpoint of hyperbolic  geometry.
More specifically, if the $2\times2$ complex matrix $A$ is  normal, then its  conformal range in the BCK  model
 is a possibly degenerate segment with its endpoints corresponding to the  eigenvalues of $A$ up to conjugation.
In the synthetic geometry of the asymptotically closed hyperbolic plane, this might be an
 $h$-point, an $h$-asymptotical point, an $h$-segment, an asymptotically closed $h$-half line, or an asymptotically closed $h$-line.
If  the $2\times2$ complex matrix $A$ is not normal, then its  conformal range in the BCK  model
 looks like an elliptical disk (allowing also circular disks).
In the synthetic geometry of the  asymptotically closed hyperbolic plane, this might be an
 $h$-elliptical disk (if $A$ has non-real eigenvalues; circular in the case of double or conjugate eigenvalues, proper otherwise),
 an $h$-elliptical parabolical disk (if $A$ has one real eigenvalue and one non-real eigenvalue),
 an $h$-distance band (if $A$ has two distinct real eigenvalues),
 or an $h$-horodisk (if $A$ has two equal real eigenvalues).
In any case, the asymptotic points correspond to real eigenvalues, and the synthetically understood
 foci correspond to eigenvalues up to conjugation.
We also supply metrical data related to the range.
We include a discussion of comparison between the elliptical range theorems
regarding the numerical range, the Davis--Wielandt shell, and the conformal range.
\end{abstract}
\maketitle
\snewpage
\section*{Introduction}
\textbf{The numerical range.}
Assume that $\mathfrak H$ is a complex Hilbert space, with interior product $\langle\cdot,\cdot\rangle$
(linear in the first variable, skew-linear in the second variable); $|\mathbf x|_2=\sqrt{\langle \mathbf x,\mathbf x\rangle}$.
If $A$ is a 
 linear operator on $\mathfrak H$, then its numerical range
(first studied by Toeplitz \cite{Toe}, Hausdorff \cite{Hau}, cf. Gustafson, Rao \cite{GR}) is defined as
\[\mathrm W(A)=
\left\{
\left(\frac{\Rea\langle \mathbf y,\mathbf x\rangle}{|\mathbf x|_2^2} ,\frac{\Ima\langle \mathbf y,\mathbf x\rangle}{|\mathbf x|_2^2} \right)
\,:\, A\mathbf x=\mathbf y, \, \mathbf x\neq 0
\right\}.\]
This is a subset of $\mathbb R^2$, usually identified with $\mathbb C$ (any point can occur there).

\textbf{The conformal range.}
A similar construction is given by the conformal range, or in other name, the real
Davis--Wieland shell
(taken from \cite{L2}, the name will be explained later).
Actually, we give two variants (with their relation explained soon):
\[\DW_{\mathrm{pCK}}^{\mathbb R}(A)=
\left\{
\left(\frac{\Rea\langle \mathbf y,\mathbf x\rangle}{|\mathbf x|_2^2}  , \frac{|\mathbf y|_2^2}{|\mathbf x|_2^2}\right)
\,:\, A\mathbf x=\mathbf y, \, \mathbf x\neq 0
\right\},\]
and
\[\DW_{\mathrm{BCK}}^{\mathbb R}(A)=
\left\{
\left(\frac{2\Rea\langle\mathbf  y,\mathbf x\rangle}{|\mathbf y|_2^2+|\mathbf x|_2^2}  ,
\frac{|\mathbf y|_2^2-|\mathbf x|_2^2}{|\mathbf y|_2^2+|\mathbf x|_2^2}\right)
\,:\, A\mathbf x=\mathbf y, \, \mathbf x\neq 0
\right\}
.\]
Here $\DW_{\mathrm{pCK}}^{\mathbb R}(A)$ will be a subset of $\overline{H}^{2-}_{\mathrm{pCK}}=\{(x,z)\in\mathbb R^2 \,:\, x^2\leq z\}$,
 and $\DW_{\mathrm{BCK}}^{\mathbb R}(A)$ will be a subset of $ \overline{H}^{2-}_{\mathrm{BCK}}=\{(x,z)\in\mathbb R^2 \,:\, x^2+z^2\leq 1\}\setminus\{(0,1)\}$
 (any point can occur in those sets).
The points $(x_{\mathrm{pCK}},z_{\mathrm{pCK}} )\in\DW_{\mathrm{pCK}}^{\mathbb R}(A)$ and
$( x_{\mathrm{BCK}}  , z_{\mathrm{BCK}})\in\DW_{\mathrm{BCK}}^{\mathbb R}(A)$
can be transcribed into each other by the canonical correspondences
\begin{equation}
\bem x_{\mathrm{BCK}}  \\ z_{\mathrm{BCK}}  \\ 1 \eem
=
\frac1{z_{\mathrm{pCK}}+1}\!
\bem 2\,x_{\mathrm{pCK}}  \\ z_{\mathrm{pCK}}-1  \\  z_{\mathrm{pCK}}+1 \eem
\,\,\,\text{and}\,\,
\bem x_{\mathrm{pCK}}   \\ z_{\mathrm{pCK}}  \\ 1 \eem
=
\frac1{1-z_{\mathrm{BCK}}}\!
\bem x_{\mathrm{BCK}}  \\ 1+z_{\mathrm{BCK}}  \\  1-z_{\mathrm{BCK}} \eem.
\plabel{eq:can20}
\end{equation}
These transcriptions, one can notice, are realized by projective transformations.

It is advantageous to augment
 $ \overline{H}^{2-}_{\mathrm{BCK}}$ by the point $(0,1)$, making
 \[\DW_{\mathrm{pCK}}^{\mathbb R}(A)\subset \overline{H}^{2 }_{\mathrm{BCK}}\equiv \{(x,z)\in\mathbb R^2 \,:\, x^2+z^2\leq 1\}.\]
According to the projective transcription \eqref{eq:can20}, this corresponds to augmenting
 $ \overline{H}^{2-}_{\mathrm{pCK}}$ by the ideal point $\infty_{\langle0,1\rangle}$
 (the ideal point in direction $\langle0,1\rangle$).
This makes \[\DW_{\mathrm{pCK}}^{\mathbb R}(A)\subset
\overline{H}^{2 }_{\mathrm{pCK}}\equiv \{(x,z)\in\mathbb R^2 \,:\, x^2\leq z\}\cup\{\infty_{\langle0,1\rangle}\}.\]
Here the extra points are never taken in the conformal range, but the augmented base sets $\overline{H}^{2 }_{\mathrm{BCK}}$
and  $\overline{H}^{2 }_{\mathrm{pCK}}$ can naturally be endowed by a geometry:
\snewpage

\textbf{ $\overline{H}^{2 }_{\mathrm{BCK}}$ and
  $\overline{H}^{2 }_{\mathrm{pCK}}$ as models of the asymptotically closed  hyperbolic plane.}
Here the ordinary $h$-points are those which are in the interior of the base set and the asymptotic
 $h$-points are those which are on the boundary of the base set.
The asymptotically closed $h$-lines are just the intersections of ordinary lines meeting the interior
 of the base set with the base set.
The ordinary $h$-lines are the asymptotically closed $h$-lines without their asymptotic $h$-points
(i.~e.~non-empty intersections of ordinary lines and the interior of the base set).
Here the natural maps are the projective $h$-collineations, which are collineations of the ambient
 projective space leaving the base set invariant   restricted of to (possibly the interior of) the base set.
(These take ordinary $h$-points to ordinary $h$-points, and asymptotic $h$-points to asymptotic $h$-points).
We will simply (and, in fact, not erroneously) call these maps $h$-collineations.
By the Beltrami--Cayley--Klein distance, an $h$-collineation invariant, natural (for connoisseurs, ``$k=1$'') $h$-metric
 is induced on the ordinary $h$-points.
This $h$-metric can be extended to the  asymptotically closed case as a $[0,+\infty]$-valued metric by
 the prescription that the distance of an asymptotic point to any other point is $+\infty$.
In that way, we obtain the   Beltrami--Cayley--Klein model of the asymptotically closed hyperbolic plane
 on   $\overline{H}^{2 }_{\mathrm{BCK}}$, and the
 parabolic Cayley--Klein model of the asymptotically closed hyperbolic plane
 on  $\overline{H}^{2 }_{\mathrm{pCK}}$.
(In what follows, the notation $\overline{H}^{2 }_{\mathrm{BCK}}$ and $\overline{H}^{2 }_{\mathrm{pCK}}$
 will be meant to include the geometry.)
The geometries of these models are isomorphic, as simple projections transformations, cf.~\eqref{eq:can20},  relate them.
Restricted to the ordinary $h$-points, we have the Beltrami--Cayley--Klein and the
 parabolic Cayley--Klein models of the hyperbolic plane, where
 the $h$-isometries are the same as $h$-collineations.
See Berger \cite{Ber} for a short introduction to hyperbolic geometry, and, in particular,
 for the Beltrami--Cayley--Klein model.
The parabolic Cayley--Klein model is a just a variant by projective equivalence.
(See Klein \cite{Kle1}, \cite{Kle2}, \cite{Kle3} for the original and detailed exposition of the projective approach.)
~
Now we can say that the  that conformal range takes values in the asymptotically closed hyperbolic plane.
What is not so clear yet is that hyperbolic geometry is indeed fundamental with respect to the behaviour of conformal range.
The fundamental properties of the conformal range can be established directly (see \cite{L2}), however
 our strategy here will be using known information regarding
\snewpage

\textbf{The Davis--Wielandt shell.}
This is a relatively classical object, cf.
Wielandt \cite{Wie0}, \cite{Wie}; Davis \cite{D1}, \cite{D2}, \cite{D3}; Li, Poon, Sze \cite{LPS}; Lins, Spitkovsky, Zhong \cite{LSZ}.
For us,  \cite{D1} and \cite{D2} are useful.
If $A$ is a linear operator on $\mathfrak H$, then its Davis--Wielandt shell is defined as
\[\DW_{\mathrm{pCK}}(A)=
\left\{
\left(\frac{\Rea\langle \mathbf y,\mathbf x\rangle}{|\mathbf x|_2^2} ,\frac{\Ima\langle \mathbf y,\mathbf x\rangle}{|\mathbf x|_2^2} , \frac{|\mathbf y|_2^2}{|\mathbf x|_2^2}\right)
\,:\, A\mathbf x=\mathbf y, \, \mathbf x\neq 0
\right\},\]
or
\[\DW_{\mathrm{BCK}}(A)=
\left\{
\left(\frac{2\Rea\langle\mathbf  y,\mathbf x\rangle}{|\mathbf y|_2^2+|\mathbf x|_2^2} ,\frac{2\Ima\langle \mathbf y,\mathbf x\rangle}{|\mathbf y|_2^2+|\mathbf x|_2^2} , \frac{|\mathbf y|_2^2-|\mathbf x|_2^2}{|\mathbf y|_2^2+|\mathbf x|_2^2}\right)
\,:\, A\mathbf x=\mathbf y, \, \mathbf x\neq 0
\right\}.\]
Here
 \[\DW_{\mathrm{pCK}}(A)
\subset
 \overline{H}^3_{\mathrm{pCK}}=\{(x,y,z)\,:\, x^2+y^2\leq z\}\cup\{\infty_{\langle0,0,1\rangle}\},\]
 and
  \[\DW_{\mathrm{BCK}}(A)
\subset
 \overline{H}^3_{\mathrm{pCK}}=\{(x,y,z)\,:\, x^2+y^2+z^2\leq1\}.\]
 $\overline{H}^{3}_{\mathrm{BCK}}$ and
  $\overline{H}^{3 }_{\mathrm{pCK}}$ can similarly be thought as models of the asymptotically closed  hyperbolic space.
Thus the sets  $\DW_{\mathrm{BCK}}(A)$ and $\DW_{\mathrm{pCK}}(A)$ are interpreted as subsets of the asymptotically closed
BCK and pCK models of the hyperbolic space.
Between the BCK and pCK models, there are (for us) canonical correspondences given by
\begin{equation}
\bem x_{\mathrm{BCK}} \\ y_{\mathrm{BCK}}  \\ z_{\mathrm{BCK}}  \\ 1 \eem
=
\frac1{z_{\mathrm{pCK}}+1}\!
\bem 2\,x_{\mathrm{pCK}} \\ 2\,y_{\mathrm{pCK}}  \\ z_{\mathrm{pCK}}-1  \\  z_{\mathrm{pCK}}+1 \eem
\,\,\,\text{and}\,\,
\bem x_{\mathrm{pCK}} \\ y_{\mathrm{pCK}}  \\ z_{\mathrm{pCK}}  \\ 1 \eem
=
\frac1{1-z_{\mathrm{BCK}}}\!
\bem x_{\mathrm{BCK}} \\ y_{\mathrm{BCK}}  \\ 1+z_{\mathrm{BCK}}  \\  1-z_{\mathrm{BCK}} \eem;
\plabel{eq:can2}
\end{equation}
 with $(0,0,1)$ in the BCK model corresponding to $\infty_{\langle0,0,1\rangle}$ in the pCK model
( although these latter points do not appear in the shell for linear operators).
\snewpage

\textbf{The relationship of the ranges.}
One can see that the Davis--Wielandt shell is a common generalization of the numerical and conformal ranges.
Or, alternatively, both the numerical and conformal ranges are particular projections of the Davis--Wielandt shell:
~
We see that $\mathrm W(A)$ can be obtained  from $\DW_{\mathrm{pCK}}(A)$ by omitting the third coordinates in the points,
 thus the numerical range it is the ``vertical'' projection of  Davis--Wielandt shell $\DW_{\mathrm{pCK}}(A)$.
If we imagine the projection as setting the third coordinate to $0$,
 then, in terms of hyperbolic geometry, this a particular projection from the never-taken asymptotic point
 $\infty_{\langle0,0,1\rangle}$  to the particular asymptotical
 tangent plane $z_{\mathrm{pCK}}=0$.
~
On the other hand, $\DW_{\mathrm{pCK}}^{\mathbb R}(A)$  can be obtained  from  $\DW_{\mathrm{pCK}}(A)$
 by omitting the second coordinates in the points, thus the corresponding conformal range is also projection of  Davis--Wielandt shell.
If we imagine the projection as setting the second coordinate to $0$,
 then, in terms of hyperbolic geometry, this a hyperbolic orthogonal projection  to the particular asymptotically closed
 hyperbolic plane  $y_{\mathrm{pCK}}=0$.
The same can be said about the relationship of $\DW_{\mathrm{BCK}}^{\mathbb R}(A)$  and  $\DW_{\mathrm{BCK}}(A)$.
~
In fact,  the numerical and conformal ranges are also related in a more direct manner as long as $A$ is bounded.
Let us define the real double (which is more like a kind of $3/2$ multiple) as
\begin{equation}
\mathrm D^{\mathbb R}(A)=\frac{A+A^*}{2}+\mathrm i A^*A.
\plabel{eq:doub}
\end{equation}
Then it is easy to see that
\begin{equation}
\DW_{\mathrm{pCK}}^{\mathbb R}(A)=\mathrm W \left(\mathrm D^{\mathbb R}(A)\right).
\plabel{eq:doubac}
\end{equation}
Consequently, the more advanced methods to compute the numerical range, like of Kippenhahn \cite{Kip1} / \cite{Kip2}
  can also be used to compute the conformal range.
(See \cite{L2} for this more directly.)
The ``easy'' relationship \eqref{eq:doubac} is, however, a bit deceiving, as
 in terms of fundamental properties of the conformal range, it is
 the Davis--Wielandt shell which will be more relevant.
(Another drawback is that \eqref{eq:doubac} cannot be well used when $A$ is an unbounded or multivalued linear operator,
 although those will not be of concern here.)
In turn, it can be thought that the Davis--Wielandt shell is behind some properties of the numerical range,
 cf.~the convexity argument of Davis \cite{Dav}; which is among the simplest ones along Dekker / Halmos \cite{Hal}.)
\snewpage

\textbf{The choice of model.}
We see that the BCK and pCK models can be used equivalently with respect to Davis--Wielandt shell and the conformal range.
Due to its simple relation to the $\mathrm W(A)$, the version $\DW_{\mathrm{pCK}} (A)$
 is the natural choice of linear algebraic community for the Davis--Wielandt shell (cf.~Horn, Johnson \cite{HJ}).
From theoretical viewpoint, in order to recognize certain symmetries, however, $\DW_{\mathrm{BCK}} (A)$ is a more convenient choice;
 furthermore, there is hardly any geometrical literature dealing with the pCK model beyond the
 general mention that, due to projective invariance, there is a certain freedom in choosing the base set.
Therefore we will use the BCK model as our main choice
.
~
One can, however, use any other model for the asymptotically closed hyperbolic space or plane,
 if we clarify what is the canonical way of transcription from the projective models.
In \cite{L2}, the conformal range was defined originally as $\CR(A)\equiv\DW^{\mathbb R}_{\mathrm{Ph}}(A)$,
 the real Davis--Wielandt shell in the Poincar\'e half-plane model.
(That model was also constructed first by Beltrami, but became popular though the work of Poincar\'e.
For the sake simplicity, we will also use the customary terminology.)
The name `conformal range' takes after its particular presentation and behaviour  in the Poincar\'e half-plane model,
 but, otherwise, it is just a synonymous expression for the `real Davis--Wielandt shell'.
~
One less familiar with hyperbolic geometry could think that using a single model would be less confusing.
This is not the case.
Various models have various strengths.
Using a single model is possible technically but that would be less enlightening.
~
At certain points,  we can make statements which are valid in more than one models,
 then we put `$*$' to the place of the model, leading to  $\DW_{*}(A)$ or $\DW^{\mathbb R}_{*}(A)$,
 signifying (at least partial) model-independence in the statement.
If a statement is meant in synthetic hyperbolic sense, then the `$*$'s can be omitted altogether.
A possibly asymptotic $h$-point $\boldsymbol x$ is understood as a compatible collection of $\boldsymbol x_*$'s;
 $\boldsymbol x_{\mathrm{pCK}}$ is the tuple of its coordinates in the pCK model, etc.

\textbf{The setting of the elliptical range theorems.}
The simplest yet not entirely trivial case for the numerical and conformal ranges is when $A$ is a complex $2\times2$ matrix.
In that case, the numerical range is described geometrically by the well-known elliptical range theorem.
This says that $\mathrm W(A)$ is a possibly degenerate elliptical disk with foci given by the eigenvalues
 of $A$, with major and minor semi-axes $s^\pm=\sqrt{\dfrac{U_A\pm|D_A|}2}$,
 and half focal distance $s^{\mathrm f}=\sqrt{|D_A|}$ (where the values
 $U_A$ and $|D_A|$ are defined in Subsection \ref{ssub:spectype}).
One can see Toeplitz \cite{Toe} (1918), Murnaghan \cite{Mur}, Donoghue \cite{Don},
 Horn, Johnson \cite{HJ}, Gustafson, Rao \cite{GR}, Li \cite{Li}, for various proofs (up to various detail).
We are partial, however, for \cite{LL}.
See Johnson \cite{Joh} (real case) and Uhlig \cite{Uhl} for the presentation of the analytic geometric equation
 (when $s^->0$, which happens when $A$ is non-normal).
Note, however, that the main point in the elliptical range theorem is the geometrical interpretation.
~
One may wonder whether a similar elliptical range theorem is valid with respect to the conformal range.
Indeed, the answer is affirmative:

\snewpage
\textbf{The qualitative elliptical range theorems in general.}
It is easy to prove that for a complex $2\times2$ matrix $A$, the
Davis--Wielandt shell $\DW_{\mathrm{pCK}}(A)$ or $\DW_{\mathrm{BCK}}(A)$ is a possibly degenerate ellipsoid
(See Davis \cite{D2}, Theorem 10.1.)
This is a kind of elliptical range theorem already.
This immediately implies that the numerical range $\mathrm W(A)$ is a possibly degenerate elliptical disk.
Moreover, it also implies that the conformal ranges $\DW_{\mathrm{pCK}}^{\mathbb R}(A)$ and
 $\DW_{\mathrm{BCK}}^{\mathbb R}(A)$ yield possibly degenerate elliptical disks
(in Euclidean view)
.
Alternatively, the ordinary elliptical range theorem and \eqref{eq:doubac} implies that $\DW_{\mathrm{pCK}}^{\mathbb R}(A)$
 is a possibly degenerate disk; and by projective transition, so is $\DW_{\mathrm{BCK}}^{\mathbb R}(A)$.
In this most qualitative level, the elliptical range theorems are quite trivial.
Therefore, any interesting statement should involve more precise (in our case: metric and focal) information.
~
One can prove that $A$ is normal if and only if $\mathrm D^{\mathbb R}(A)$ is normal.
Thus  $\DW_{\mathrm{pCK}}^{\mathbb R}(A)$ and $\DW_{\mathrm{pCK}}^{\mathbb R}(A)$
 are degenerate (meaning a point or a segment) if and only if $A$ is normal.
Regarding the focal properties, however, there are no obvious consequences concerning the conformal range.
In the non-normal case, $A\mapsto \mathrm D^{\mathbb R}(A)$ does not even respect parabolicity in any direction.
The point, however, is that the conformal range  and the Davis--Wielandt shell should be interpreted in terms of hyperbolic geometry.
~
The Davis--Wielandt shell is quite different from the numerical range, but the elliptical range theorem for it
 is quite relatable to the case of the numerical range.
~
The elliptical range theorem for the conformal range is slightly more complicated than for the numerical range.
This can be attributed 
to the fact that asymptotic points can act as focal points in our hyperbolic conics.
Otherwise, the analogy is close.
The only unusual feature is that we have to consider conics on the hyperbolic plane, which is a slightly obscure topic.
Nevertheless, the study of hyperbolic conics predates the study of the numerical range:

\textbf{Hyperbolic conics in general.}
The systematic study of hyperbolic conics was apparently started by Story \cite{Sto} (1882), a rather serious work.
Then hyperbolic conics are discussed by Killing \cite{Kil} (1885)
 D'Ovidio \cite{DO1}, \cite{DO2}, \cite{DO3} (1891), Barbarin \cite{Bar} (1901), Liebmann \cite{Lib0} (1902),
 Massau \cite{Mas} (1905), Liebmann \cite{Lib1} (1905), Coolidge \cite{Coo} (1909).
A very detailed study of the conics is conducted by V\"or\"os \cite{V1} / \cite{V2} (1909/10).
Thus, it can be said that by 1910 the hyperbolic conics were rather well-explored;
 although not all of the sources cited above can be considered equally accessible.
(A parallel record for quadratic surfaces is provided by
 Barbarin \cite {Bar}, Coolidge \cite{CooP} (1903), Bromwich \cite{Bro} (1905), Coolidge \cite{Coo},
 and V\"or\"os \cite{V3} (1912).)
Numerical aspects of the classification of hyperbolic conics are discussed further in Fladt \cite{F1}, \cite{F2}, \cite{F3}.
For those who are not specialists of hyperbolic geometry, the exposition of Izmestiev \cite{Izm} may be a good choice for a start.
(For us, \cite{Izm} Section 6.4 will be important.
But, for one truly interested in hyperbolic conics, facing Story \cite{Sto} is probably unavoidable.)
Ultimately, the point is that $h$-conics are simplest to be defined as conics of the projective models (i.~e.~of pCK or BCK).
Then they enjoy similar synthetic presentations (essentially: focal descriptions) and properties as in the Euclidean case.
Nevertheless, requiring any substantial preliminary knowledge about hyperbolic conics would likely be too optimistic.
Therefore, despite having given out several references, we organize the presentation so that
 that familiarity with hyperbolic conics  beyond cycles  will be only helpful but not required.
\snewpage

\textbf{Preliminaries required.}
For our purposes, a basic understanding of hyperbolic geometry up to cycles,
 e.~g.~as in Berger \cite{Ber}, would be sufficient.
(But there are several very nice model based treatments.)
There reader should also be able to recognize some hyperbolic symmetries in the BCK model.
(This amounts more or less just to the observation that the Euklidean symmetries of the BCK model are also $h$-collineations.)
Apart from that, we quote only three basic statements about the Davis--Wielandt range from Davis \cite{D1}, \cite{D2};
 therefore no particular familiarity with the Davis--Wielandt range will be required either.
\snewpage

\textbf{A reminder on cycles.}
Cycles on the hyperbolic plane can already be defined using $h$-collineations:
If $P$ is an ordinary $h$-point, then the orbit of an external ordinary $h$-point $E$ under the $P$-fixing
 $h$-collineations is the $h$-circle with center $P$ passing through $E$.
If $P_0$ is an asymptotic $h$-point, then the orbit of an ordinary $h$-point $E$ under the
the group generated by those $h$-collineations whose fixed point set is
an asymptotically closed $h$-line passing through $P_0$
 is the $h$-horocycle with asymptotic point $P_0$ passing through $E$.
If $l$ is an $h$-line, then orbit of an external ordinary $h$-point $E$ under the $h$-collineations  fixing the
 asymptotic points of $l$ is the (two-sided) $h$-distance line with axis $l$ passing through $E$.
In terms of the associated distance geometry:
An $h$-circle is the set of points of a given positive distance from $P$.
An $h$-horocycle is a set of points of infinite but ``equal'' distance from $P_0$.
(The points $E_1\neq E_2$ are ``of equal distance from $P_0$'' if  the asymptotic triangle $E_1E_2P_0\bigtriangleup$
 is $h$-congruent to $E_2E_1P_0\bigtriangleup$; or equivalently, the asymptotic closure
 of the perpendicular bisector of the segment $E_1E_2$ passes through $P_0$; this leads to an equivalence relation.
Regularized  distances from $P_0$ can also be defined;
 but these are more of less equivalent to $h$-horocycles with asymptotical point $P_0$, as they are just signed distances from them.)
The $h$-distance line is the set of points of a given positive distance from $l$.
In general, there is no harm in taking the distance-based definitions for $h$-circles and $h$-distance lines;
 however this is not done directly in the case of $h$-horocycles,
 so we take the definition with $h$-collineations here.
The corresponding $h$-disks, $h$-horodisks, $h$-distance bands are obtained by taking convex closure,
 or, alternatively, allowing distances less or equal than the original threshold level;
 that is, essentially, by filling the interiors in.
The asymptotically closed versions are obtained by adding $P_0$ or the asymptotic points of $l$, depending on the case.

One can describe $h$-cycles analytically in BCK and pCK models as follows:
In the ambient projective spaces of these models, the asymptotically closed cycles are given
 by non-degenerate quadrics which have at least one ordinary $h$-point
  and whose equations are linearly generated by the quadric of the asymptotic points (called: absolute)
  and by the quadric of a double line.
In the case of the $h$-circle, $h$-horocycle, $h$-distance line, the double line is supported on a
 projective line intersecting the set of asymptotic points in 0,1,2 points, respectively.
In the case of the $h$-distance line, the line is the projective extension of the defining line $l$.
In the case of the $h$-horocycle, the line is tangent to the asymptotic point $P_0$.
In the case of the $h$-circle, the line is external to the model (it is the polar line of $P$ relative to the absolute).
This characterization can easily be proven considering some nice canonical representatives, and then
 projective invariance extends it generally.
Although we make remarks in this regard, we do not use the this analytical characterization
 in merit, as we will rather consider the canonical representatives directly.
Nevertheless, from a purely analytic viewpoint, the analytic characterization above is the simplest and most unified one
 to introduce $h$-cycles.
We also remark:
The $h$-distance lines are easy to recognize
 because they are tangent ellipses at the two critical asymptotic points to the set of asymptotic points from the inside.
The $h$-horocycles are not only osculating but hyperosculating ellipses to the set of asymptotic points from the inside
(inner plus osculating implies hyperosculating);
 this is a good reminder regarding their shape (unless the asymptotic point is $\infty_{\langle0,1\rangle}$,
 because osculation there is transparent only after a suitable change of coordinates).
\begin{commentx}
 The short characterizations given above, however, do not substitute the proper understanding of $h$-cycles,
 for what the reader is better to consult the standard literature if needed.
\end{commentx}

\textbf{Hyperbolic conics for our interest.}
Beyond lineal objects and cycles, we will consider only proper $h$-ellipses and $h$-elliptic parabolas.
These are most easily accounted in the analytic geometric viewpoint:
Now, indeed, if $\DW_{\mathrm{BCK}}^{\mathbb R}(A)$ is not a possibly degenerate segment, then it must be an elliptical
 subdisk  strictly contained in the unit disk (as $(0,1)_{\mathrm{BCK}}$ is forbidden for the conformal range).
As such it may have $0$, $1$, or $2$ asymptotic (i.~e.~boundary) points.
In the case of $0$ asymptotic points, we have the case of an $h$-ellipse, which may be an $h$-circle (which is a cycle)
 or it is otherwise called a proper $h$-ellipse.
In the case of $1$ asymptotic points, it may or may not be hyperosculating  to the boundary.
In the hyperosculating case we have an $h$-horocycle, and otherwise it called an $h$-elliptic parabola.
In the case of $2$ asymptotic points,  we have a pair of (conjugate) hypercyles (or ``an algebraically closed hypercycle'').
(More precisely, we have the disks associated to these conics.)
The \textit{analytic} descriptions above will be our official definitions to $h$-ellipses and $h$-elliptic parabolas.

 \textbf{Synthetical characterizations.}
The $h$-conics above which are not cycles can also be described in synthetical ways,
 by focal characterizations, representing what should be known
 about them having a synthetical understanding.
(We will not assume that knowledge but we will provide some alternative proofs for those who already have it.)

For $h$-ellipses, the synthetical characterization in terms of distance sums is already articulated by Story \cite{Sto}.
Combined with elementary observations it can be put into the following form
(but see Killing \cite{Kil}, Liebmann \cite{Lib0}, \cite{Lib1} for more synthetic discussions):
Let $F_1$ and $F_2$ be (non-asymptotic) hyperbolic points.
Let $\mathrm d$ denote the (standard) hyperbolic distance, and we choose $m^+>\mathrm d(F_1,F_2)$ the intended
``length of major axis''.
Then the associated $h$-ellipse has points $\boldsymbol x$, where $\mathrm d(F_1,\boldsymbol x)+\mathrm d(F_2,\boldsymbol x)=m^+$.
The corresponding $h$-elliptical disk is given by the equation   $\mathrm d(F_1,\boldsymbol x)+\mathrm d(F_2,\boldsymbol x)\leq m^+$.
Here $F_1$ and  $F_2$ are the foci (just like $m^+$, they are determined by the $h$-ellipse).
What makes the $h$-ellipse proper is just $F_1\neq F_2$, otherwise we have a cycle, namely an $h$-circle.
Now the statement is that analytic and synthetic $h$-ellipses are the same.
The reader may or may not know this already.
In any case, we will also demonstrate the equivalence of definitions.
However, assuming having familiarity with the  synthetic  geometry simplifies certain proofs.
Regarding $h$-ellipses, a notable geometric fact is the following:
Assume that $s^+=\frac12m^+$ is half of the length of the major axis,
Assume that $s^-=\frac12m^-$ is half of the length of the minor axis,
and $s^{\mathrm f}=\frac12m^{\mathrm f}$ is half of the distance of the foci.
Then
\begin{equation}
\cosh s^+=(\cosh s^-)(\cosh s^{\mathrm f})
\plabel{eq:Pellip}
\end{equation}
holds.
(This is just the hyperbolic Pythagorean rule with respect to the center, one focus, and one vertex on the minor axis.)

For $h$-elliptical parabolas the situation is not generic, it is better to use the viewpoint
 put forward by Killing \cite{Kil}, and discussed in greater detail by Liebmann \cite{Lib0}, \cite{Lib1}:
Let $C$ be a horocycle.
For a hyperbolic point $\boldsymbol x$, let  $\mathrm d_C(\boldsymbol x)$ denote the signed distance
 of $\boldsymbol x$ from $C$ such that it is negative if $\boldsymbol x$ is inside the horocycle $C$.
(Therefore the function $\boldsymbol x\mapsto \mathrm d_C(\boldsymbol x)$ is a regularized distance function
 from the asymptotical point of $C$. Adding a constant to this function means just passing to a parallel
 horocycle $C'$.)
Having a horocycle $C$, we can choose an arbitrary point $F$ inside the horocyle (i.~e.~$\mathrm d_C(F)<0$).
Then the (ordinary) points of the associated $h$-elliptic parabola are those hyperbolic points $\boldsymbol x$
 for which the equation $\mathrm d(F,\boldsymbol x)+\mathrm d_C(\boldsymbol x)=0$ holds.
The corresponding $h$-elliptical parabolical disk is given by the equation $\mathrm d(F,\boldsymbol x)+\mathrm d_C(\boldsymbol x)\leq0$.
Here $F$ is a focus of the $h$-elliptic parabola; but the asymptotic point $F_0$ of the horocyle
 can be considered as the other.
The tip (vertex) $V$ of the $h$-elliptic parabola lies halfway between the focus $F$
and the pedal point $P$ of $ F$ on the horocycle $C$.
As the absolute distance of the horocycle $C$ and the focus $F$ is $-\mathrm d_C(F)$,
 his implies that the distance of $V$ and $F$ is $\mathrm d(V,F)=-\frac12\mathrm d_C(F)$.
Note that, similarly to the case $h$-ellipses, beyond the foci, there is an additional metric
 data which manifests in the concrete choice of the horocycle, but
 focus-vertex distance $-\frac12\mathrm d_C( F)>0$ can equivalently be used for this purpose.
As before, one may or may not be familiar with the equivalence
 of the analytical and synthetical definitions of $h$-elliptic parabolas,
 but knowing it simplifies some proofs.

As $h$-elliptic parabolas are less familiar objects than $h$-ellipses, we will elaborate a bit more on them:
We address that how they can considered as limits of $h$-ellipses canonically.
Let us consider the $h$-elliptical parabolical disk given  as before.
Then, for $u>0$, let $ F_1(u)= F$; let $ F_2(u)$ be the point which lays
 on the half-line $\overrightarrow{FF_0}$ in hyperbolic distance $u$ from $F$.
Let $m^+(u)=-\mathrm d_C(F)+u$.
As $u\nearrow+\infty$, we see that $F_1(u)=F$, and $F_2(u)\rightarrow F_0$,
 and the functions
\[\boldsymbol x\mapsto \mathrm d(F_1(u),\boldsymbol x) + \mathrm d(F_2(u),\boldsymbol x)-m^+(u)
\equiv \mathrm d(F,\boldsymbol x)+\mathrm d(F_2(u),\boldsymbol x)-u+\mathrm d_C(F)\]
decreases (non-strictly) in $u$ (which is a simple consequence of the triangle inequality), limiting to the function
\[\boldsymbol x\mapsto  \mathrm d(F,\boldsymbol x)+\mathrm d_C(\boldsymbol x).\]
This means that, as $u\nearrow+\infty$, we have larger and larger $h$-elliptical discs, which the same major axis line;
 and due to the non-vanishing
 gradient of the function $\mathrm d_C$, they limit to our  $h$-elliptical parabolical disk in question.
This we call as the canonical limiting process.
This has he advantage that the tip (vertex on the major axis) closest to $F_1(u)=F$ is $V_1(u)=V$ stationarily.
It might be surprising, but as $u\nearrow+\infty$, the minor axis $m^-(u)$ of the $h$-ellipses increases
 but limits to a finite number, $m^-(u)\nearrow M^-$.
Indeed, for $s^-(u)=\frac12m^-(u)$, $s^+(u)=\frac12m^+(u)=\mathrm d(V,F)+\frac u2$, $s^{\mathrm f}(u)=\frac12m^{\mathrm f}(u)=\frac u2$,
 the identity $\cosh s^-(u)=\frac{\cosh s^+(u)}{\cosh s^{\mathrm f}(u)}$ limits, as $u\nearrow+\infty$, to
 $\cosh s_{\mathrm{a}}^-=\exp \mathrm d(V,F)$.
This $s_{\mathrm{a}}^-$ is the ``asymptotic'' minor semi-axis length, $s_{\mathrm{a}}^-=\frac12m_{\mathrm{a}}^-$.
As the major axes of the $h$-ellipses remain  the same, namely the axis of the $h$-elliptic parabola, we see
 the following: the $h$-elliptic parabola is contained in the distance band of radius
 $s_{\mathrm{a}}^-=\frac12m_{\mathrm{a}}^-$ around the axis but not in a smaller one.
For an $h$-elliptic parabola given on its own,
 $s^-$ and $m^-$ can be used instead of $s_{\mathrm{a}}^-$ and $m_{\mathrm{a}}^-$, respectively;
 but these are just asymptotic lengths and no actual axis segment or axis line belongs to them.
Ultimately, our main take is
\begin{equation}
 \cosh s^-=\exp \mathrm d(V,F).
\plabel{eq:Pellipar}
\end{equation}
\snewpage

\textbf{Quadratic surfaces in 3-dimensional space for our purposes.}
In terms of quadratic surfaces, for the Davis--Wielandt shell, only $h$-tubes and $h$-horospheres  will be considered
in the hyperbolic space.
~
If $P_0$ is an asymptotic $h$-point, then [the orbit of an ordinary $h$-point $E$ under the
 group generated by those $h$-collineations whose fixed point set is
 an asymptotically closed $h$-plane passing through $P_0$]
 is the $h$-horosphere with asymptotic point $P_0$ passing through $E$.
If $l$ is an $h$-line, then [the orbit of an external ordinary $h$-point $E$ under the $h$-collineations fixing the
 asymptotic points of $l$] is the $h$-tube with axis $l$ passing through $E$.
The $h$-horosphere is a set of points of infinite but ``equal'' distance from $P_0$.
The $h$-tube is the set of points of a given positive distance from $l$.
Again, taking the distance-based definition for $h$-tubes is reasonable, but the description of horospheres
 is better to be discussed in the setting of the $h$-collineation picture, as proper regularized
 distances from an asymptotic point sort of invoke horospheres already.
The asymptotically closed versions of the $h$-horospheres and  $h$-tubes
 are obtained by adding the asymptotic points $P_0$, or the asymptotic points of the lines $l$, respectively.
~
One can describe $h$-horospheres and $h$-tubes analytically in BCK and pCK models as follows:
In the ambient projective spaces of these models, the asymptotically closed $h$-horosphere is given by a
 non-degenerate quadric which have at least one ordinary $h$-point
 and whose equation is linearly generated by the quadric of the asymptotic points
 and by the quadric of a double plane tangent to the set of asymptotic points (at the asymptotic point $P_0$ of the $h$-horosphere).
The asymptotically closed $h$-tube is given by a
  non-degenerate quadric which have at least one ordinary $h$-point
  and whose equation is linearly generated by the quadric of the asymptotic points
  and by the quadric corresponding to
  a pair of different planes
  tangent to the set of asymptotic points (at the asymptotic points of the axis $l$ of  the $h$-tube).
These characterizations can be established using some canonical representatives and
 then they can be extended by projective invariance.
Again, although we make remarks on them,
 we will not really use these analytic characterizations, as we will deal with the
 canonical representatives directly.
\snewpage

\textbf{On the layout of this paper.}
Section \ref{sec:DW} reviews  basic information related to  Davis--Wielandt shell in greater detail,
 and states the elliptical range theorem there.
Section \ref{sec:CR} discusses basic information related to the conformal range,
 and states the elliptical range theorem there.
Section \ref{sec:spec22} discusses a spectral classification of the complex $2\times2$ matrices.
Section \ref{sec:DWP} proves the    elliptical range theorem for the Davis--Wielandt shell.
Section \ref{sec:CRP} proves the   elliptical range theorem for the conformal range.
Section \ref{sec:com} compares the elliptical range theorems
 for the numerical range, the Davis--Wielandt shell, and the conformal range.

\textbf{On terminology and notation.}
Due to their nature, depending on their appearance,
 the terms `$h$-collineations', `$h$-isometries', `$h$-congruences', `conformal transformations',
 are used quite ecclectically.
Also, the expressions
 `M\"obius transformations', `fractional linear transformations'
 (working synonymously in the 2-dimensional case but serving only the orientation-preserving case in the 3-dimensional case)
 belong here.
If $a>0$, then $\frac a0=+\infty$.
However, $\frac 00=\pm\infty$, for which, in formulas,
 special evaluation rules apply, which will be indicated; typically, the values $0$, $1$, or $\infty$ may apply.
The triple ratio $a:b:c$ is the 3-tuple $(a,b,c)$ up to multiplication by non-zero scalars.
(Thus $0:0:0$ is a valid ratio, and it different from any other one.)
Similar comment applies for ordinary, i.~e.~double ratios.
The inequality $A\leq B$ is fully equivalent to $C\leq D$
 if  $A< B$ is equivalent to $C< D$ and $A= B$ is equivalent to $C= D$.
$A\leq B$ transcribes to $C\leq D$ if
$A$ can be rewritten as $C$, and $B$ can be rewritten as $D$.
(This implies full equivalence.)

\textbf{On this presentation.}
The reader will recognize that much of the paper deals with the related hyperbolic geometry explained;
 that could have been avoided at the cost of making the presentation less accessible.
Also, allowing $A$ to be a multivalued operator (as in Davis \cite{D1}, \cite{D2})
 would have simplified some statements, and would have allowed to access a different
 set canonical representatives, but it would have made the treatment a bit alien
 compared to   standard linear algebra.
The emphasis is on making the material, in particular, some hyperbolic geometry, available for linear algebraists.
Although the conformal range is more informative in the higher dimensional case,
 this might be a good opportunity to develop some initial familiarity with it.

\textbf{Acknowledgements.}
The author thanks \'Akos G. Horv\'ath for help related to hyperbolic conics.
The present state of the manuscript is due to the kind advices
 of Froilán M. Dopico, and some other persons I am not able to name.
\snewpage

\snewpage
\section{On the Davis--Wielandt shell}
\plabel{sec:DW}

\subsection{On the hyperbolic space (review)}\plabel{ss:hypspace}
~\\

\textbf{Complex parametrization and collineations.}
As it was indicated, a modern review of hyperbolic geometry is contained in  Berger \cite{Ber}.
But, for our purposes, the discussions in Davis \cite{D1}, \cite{D2} are very useful
(or see \cite{L2} Appendix A).
Let us recall that the complex numbers are embedded to the pCK and BCK models by
\[\iota_{\mathrm{pCK}}(\lambda)=\left(\Rea\lambda,\Ima\lambda,|\lambda|^2\right)
\quad\,\,\text{and}\quad\,\,
\iota_{\mathrm{BCK}}(\lambda)=\left(\frac{2\Rea\lambda}{|\lambda|^2+1},
\frac{2\Ima\lambda}{|\lambda|^2+1},\frac{|\lambda|^2-1}{|\lambda|^2+1}\right)\]
 (as asymptotic points);
 and by extending to the Riemann sphere, $\iota_{\mathrm{pCK}}(\infty)=\infty_{\langle0,0,1\rangle}$, $\iota_{\mathrm{BCK}}(\infty)=(0,0,1)$,
 all asymptotic points are captured.

In what follows, in this section, `$*$' may mean either BCK or pCK.

If $f:\lambda\mapsto \frac{a\lambda+b}{c\lambda+d}$, $a,b,c,d\in\mathbb C$, $ad-bc\neq0$ is a complex fractional linear
(i.~e.~complex M\"obius) transformation, then
there is a single unique collineation $f_*$ of the asymptotically closed hyperbolic space such that
$f_*(\iota_*(\lambda))=\iota_*(f(\lambda))$ for $\lambda\in\mathbb C\cup\{\infty\}$.
This, by the collineation structure, induces, uniquely, an orientation-preserving collineation of the hyperbolic space.
(If we want to consider not orientation-preserving $h$-collineations,
we should include the ones induced from the actions $f:\lambda\mapsto \frac{a\bar\lambda+b}{c\bar\lambda+d}$, $a,b,c,d\in\mathbb C$, $ad-bc\neq0$.
Note that the effect from the transformation $\lambda\mapsto \bar\lambda$
is $(x_*,y_*,z_*)\mapsto (x_*,-y_*,z_*)$, which is the $h$-reflection to the canonically embedded $h$-plane.)
Recall that a $h$-collineation is a restriction of a projective transformation to the base set of the pCK and BCK models.
(The projective transformations, i.~e.~essentially the explicit actions on the non-asymptotical points, are written out in, say, \cite{L2} Appendix A; but we will not use them explicitly; the main point is their existence.)
Here orientation-preservance in hyperbolic sense is the same as the one in Euclidean sense
in the interior of the model, or  just on the asymptotic points.
\snewpage

\textbf{Distance.}
In the BCK model,  the natural distance function  is given by
\begin{multline}\mathrm d^{\mathrm{BCK}}((x_{\mathrm{BCK}},y_{\mathrm{BCK}},  z_{\mathrm{BCK}}),
 (\tilde x_{\mathrm{BCK}}, (\tilde y_{\mathrm{BCK}},\tilde z_{\mathrm{BCK}}))=\\
=\arcosh\left(\frac{1-x_{\mathrm{BCK}}\tilde x_{\mathrm{BCK}}-y_{\mathrm{BCK}}\tilde y_{\mathrm{BCK}} - z_{\mathrm{BCK}}\tilde z_{\mathrm{BCK}}  }{\sqrt{1-(x_{\mathrm{BCK}})^2-(y_{\mathrm{BCK}})^2-(z_{\mathrm{BCK}})^2}
 \sqrt{1-(\tilde x_{\mathrm{BCK}})^2-(\tilde y_{\mathrm{BCK}})^2-(\tilde z_{\mathrm{BCK}})^2}}\right).
\plabel{eq:dis3}
 \end{multline}
~

\textbf{Representatives for horospheres.}
Assume that the map
$F:\mathbf w\in\mathbb R^2\mapsto \mathbf A\mathbf w+\mathbf b$
 is an Euclidean isometry.
Then
$\mathrm{Up}_F: (\mathbf w,z)\in\mathbb R^2\times\mathbb R\mapsto (\mathbf A\mathbf w+\mathbf b,z+2
\langle\mathbf A\mathbf w,\mathbf b\rangle+|\mathbf b|^2)$
 induces a $h$-collineation of the pCK model (corresponding to a
 $h$-collineation   $f_{\mathrm{pCK}}$ where $f(\lambda)=a\lambda+b$ or $f(\lambda)=a\bar\lambda+b$ with $|a|=1$),
 fixing $\infty_{\langle0,0,1\rangle}$.
In fact, the  $\mathrm{Up}_F$'s form the group $F_{\infty,\infty}$ generated by those $h$-collineations
 whose fixed point set is an asymptotically closed $h$-plane through $\infty_{\langle0,0,1\rangle}$
 (corresponding to those $F$'s which are Euclidean reflections).
(It is hard to express that $\infty$ is a ``double fixpoint''.)
The group $F_{\infty,\infty}$ acts transitively on the sets with equation
\begin{equation}
z_{\mathrm{pCK}}-(x_{\mathrm{pCK}})^2-(y_{\mathrm{pCK}})^2=c;
\plabel{eq:horoc}
\end{equation}
thus one can see that these sets with $c>0$ are the $h$-horospheres with asymptotic point $\infty_{\langle0,0,1\rangle}$.
In fact, $\mathrm{Up}_F$ just lifts the action of $F$ up to all sets with equation \eqref{eq:horoc}.
(Note that the group of all $\infty_{\langle0,0,1\rangle}$-fixing collineations, which is larger,
 is the semidirect product of $F_{\infty,\infty}$  with the group of maps
 $(\mathbf w,z)\in\mathbb R^2\times\mathbb R\mapsto (a\mathbf w,a^2z)$, for $a>0$,
 corresponding to $h$-collineations  $f_{\mathrm{pCK}}$ where $f(\lambda)=a\lambda$;
 these will mix up the $h$-horospheres in question.)
(A geometer would make this argument using the closely related Poincaré half-space model, which we have not considered.)
Transcription of \eqref{eq:horoc} to the BCK model yields
\begin{equation}
(x_{\mathrm{BCK}})^2+(y_{\mathrm{BCK}})^2+ (z_{\mathrm{BCK}})^2-1 +c(z_{\mathrm{BCK}}-1)^2=0 ,
\plabel{eq:horocc}
\end{equation}
with $c>0$, as the equations for asymptotically closed $h$-horospheres with asymptotic point $(0,0,1)$.
(For the analytically minded, we remark that \eqref{eq:horocc} is indeed, a linear, in fact, positive, combination, of the equations
 $(x_{\mathrm{BCK}})^2+(y_{\mathrm{BCK}})^2+(z_{\mathrm{BCK}})^2-1=0$
 and $(z_{\mathrm{BCK}}-1)^2=0$, which are for the absolute and the double tangent plane at $(0,0,1)$ to the BCK model.
In the pCK model, as we have $ z_{\mathrm{pCK}}-(x_{\mathrm{pCK}})^2-(y_{\mathrm{pCK}})^2=0$
 for the absolute, and $0=1$ for the double ideal plane.)
In particular, $c=1$ yields
\begin{equation}
(x_{\mathrm{BCK}})^2+(y_{\mathrm{BCK}})^2+2(z_{\mathrm{BCK}})^2-2z_{\mathrm{BCK}}=0
\plabel{eq:horo}
\end{equation}
as the equation for an asymptotically closed $h$-horosphere with asymptotic point $(0,0,1)$.
\snewpage

\textbf{Representatives for $h$-tubes.}
Assume that $\mathbf A$ is $2\times2$ real matrix which is a similarity, i.~e.~positive scalar times an orthogonal transformation.
Then  the map
$\mathrm{Up}_{\mathbf A}: (\mathbf w,z)\in\mathbb R^2\times\mathbb R\mapsto (\mathbf A\mathbf w ,\|\mathbf A\|z )$
 induces a $h$-collineation of the pCK model (corresponding to a
 $h$-collineation   $f_{\mathrm{pCK}}$ where $f(\lambda)=a\lambda $ or $f(\lambda)=a\bar\lambda $),
 fixing the asymptotic points $(0,0,0)$ and $\infty_{\langle0,0,1\rangle}$.
In fact, the  $\mathrm{Up}_{\mathbf A}$'s form the group $F_{0\infty}$
containing  those $h$-collineations which fix the asymptotic points $(0,0,0)$ and $\infty_{\langle0,0,1\rangle}$ of the line
 $x_{\mathrm{pCK}}=y_{\mathrm{pCK}}=0$.
The group $ F_{0\infty}$ acts transitively on all sets with equation
\begin{equation}
z_{\mathrm{pCK}}=(1+c)(x_{\mathrm{pCK}})^2+(y_{\mathrm{pCK}})^2);
\plabel{eq:doroc}
\end{equation}
but the points $(0,0,0)$ and $\infty_{\langle0,0,1\rangle}$ taken out.
Thus one can see that these sets with $c>0$ are the $h$-tubes belonging to the $h$-line $x_{\mathrm{pCK}}=y_{\mathrm{pCK}}=0$.
(They are asymptotically closed if $(0,0,0)$ and $\infty_{\langle0,0,1\rangle}$ are added.)
Transcription of \eqref{eq:doroc} to the BCK model yields
\begin{equation}
(1+c)(x_{\mathrm{BCK}})^2+(1+c)(y_{\mathrm{BCK}})^2+ (z_{\mathrm{BCK}})^2-1 =0 ,
\plabel{eq:dorocc}
\end{equation}
with $c>0$, as the equations for asymptotically closed
 $h$-tubes belonging to the $h$-line $x_{\mathrm{BCK }}=y_{\mathrm{BCK }}=0$, i.~e.~with asymptotical points $(0,0,\pm1)$.
(For the analytically minded, we remark that \eqref{eq:dorocc} is indeed a particular linear combination, of the equations
 $(x_{\mathrm{BCK}})^2+(y_{\mathrm{BCK}})^2+(z_{\mathrm{BCK}})^2-1=0$
 and $(z_{\mathrm{BCK}})^2-1=0$, which are for the absolute and
 for the union of tangent planes   at $(0,0,\pm1)$ to the BCK model.
In the pCK model,  we have $ z_{\mathrm{pCK}}-(x_{\mathrm{pCK}})^2-(y_{\mathrm{pCK}})^2=0$
 for the absolute, and $0=z_{\mathrm{pCK}}$ for the pair of the  plane tangent to the asymptotical points at $(0,0,0)$ and the ideal plane.)

The purely distance approach works out as follows:
By symmetry reasons, the $h$-plane $z_{\mathrm{BCK}}=C$ must be perpendicular to the $h$-line $x_{\mathrm{BCK }}=y_{\mathrm{BCK }}=0$.
Therefore in the BCK model the map
$(x_{\mathrm{BCK}},y_{\mathrm{BCK}},z_{\mathrm{BCK}} )\mapsto (0,0,z_{\mathrm{BCK}} )$
must be the orthogonal (pedal) projection to the $h$-line $x_{\mathrm{BCK }}=y_{\mathrm{BCK }}=0$.
Therefore the equation for the respective $h$-tube with radius $r>0$ is
\[\mathrm d^{\mathrm{BCK }}\left((x_{\mathrm{BCK}},y_{\mathrm{BCK}},z_{\mathrm{BCK}} ), (0,0,z_{\mathrm{BCK}} )\right)=r;\]
that is
\[\frac{\sqrt{1-(z_{\mathrm{BCK}})^2}}{\sqrt{1-(x_{\mathrm{BCK}})^2-(y_{\mathrm{BCK}})^2-(z_{\mathrm{BCK}})^2}}=\cosh r.\]
Rewriting this (on ordinary $h$-points) leads to the equation \eqref{eq:dorocc} with $c=\frac1{(\sinh r)^2}$.

\snewpage
\subsection{The Davis--Wielandt shell (review)}
\plabel{ss:DWbas}~\\

Let $A$ be a linear operator on a complex Hilbert space, of any dimension.
\begin{theorem}[Wielandt, Davis]
\plabel{thm:DWronc}
If $U$ is unitary, then  $\DW_{*}(A)=\DW_{*}(UAU^{-1})$.
\end{theorem}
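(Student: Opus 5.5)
The plan is to prove the statement at the level of the defining formulas, since $\DW_{\mathrm{pCK}}(A)$ and $\DW_{\mathrm{BCK}}(A)$ are both built only from the three quantities $\langle\mathbf y,\mathbf x\rangle$, $|\mathbf x|_2^2$ and $|\mathbf y|_2^2$ for pairs with $A\mathbf x=\mathbf y$, $\mathbf x\neq0$. The idea is to show that the unitary $U$ gives a bijection between the admissible pairs for $A$ and those for $UAU^{-1}$, and that this bijection preserves the three quantities. Then the two sets of tuples coincide, whichever model `$*$' is meant.

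Concretely, given $\mathbf x\neq0$ with $A\mathbf x=\mathbf y$, set $\mathbf x'=U\mathbf x$ and $\mathbf y'=U\mathbf y$. Then $\mathbf x'\neq0$ because $U$ is injective, and
\[(UAU^{-1})\mathbf x'=UA\mathbf x=U\mathbf y=\mathbf y'.\]
Since $U$ preserves inner products, we get $\langle\mathbf y',\mathbf x'\rangle=\langle\mathbf y,\mathbf x\rangle$, $|\mathbf x'|_2=|\mathbf x|_2$ and $|\mathbf y'|_2=|\mathbf y|_2$. Hence the point of $\DW_*(A)$ produced by $\mathbf x$ equals the point of $\DW_*(UAU^{-1})$ produced by $\mathbf x'$, which gives $\DW_*(A)\subset\DW_*(UAU^{-1})$. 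For the reverse inclusion, apply the same argument to the unitary $U^{-1}=U^*$ and the operator $UAU^{-1}$, since $U^{-1}(UAU^{-1})U=A$.

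For an unbounded or only densely defined $A$, one extra check is needed: the domain of $UAU^{-1}$ is $U(\mathrm{Dom}\,A)$, so $\mathbf x\mapsto U\mathbf x$ is a bijection between the two domains. This is the only point that needs any care; otherwise the statement is routine. As an alternative, the pCK and BCK versions are related by the fixed transcription \eqref{eq:can2}, so it would be enough to prove the equality in one model and transfer it.
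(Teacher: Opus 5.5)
Your proof is correct: $\mathbf x\mapsto U\mathbf x$ is a bijection between the admissible vectors for $A$ and for $UAU^{-1}$, and it preserves $\langle\mathbf y,\mathbf x\rangle$, $|\mathbf x|_2$ and $|\mathbf y|_2$, which are the only quantities either model uses. The paper gives no argument of its own; it cites Davis and remarks that only the M\"obius transformation rule (Theorem \ref{thm:DWtrans}) is nontrivial among the three basic properties, so your direct check is exactly the routine verification it alludes to.
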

\begin{theorem}[Wielandt, Davis]
\plabel{thm:DWtrans}
If $f:\lambda\mapsto \frac{a\lambda+b}{c\lambda+d}$, $a,b,c,d\in\mathbb C$, $ad-bc\neq0$ is a complex fractional linear
(i.~e.~complex M\"obius) transformation, and $-\frac dc$ is not in the spectrum of $A$,
then for $f(A)=\frac{aA+b}{cA+d}$, the Davis--Wielandt shell is given as
\[\DW_*(f(A))=f_*(\DW_*(A)).\]
\end{theorem}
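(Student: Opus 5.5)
The plan is to reduce to generators of the Möbius group and check each one by a direct computation in the pCK model. The BCK statement then follows, because the transcription \eqref{eq:can2} is a fixed projective map that conjugates $f_{\mathrm{pCK}}$ into $f_{\mathrm{BCK}}$.

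\textbf{Reduction to generators.} If $c=0$, then $f(\lambda)=\frac ad\lambda+\frac bd$ is affine. If $c\neq0$, write $f=g_3\circ g_2\circ g_1$ with $g_1(\lambda)=c\lambda+d$, $g_2(\lambda)=1/\lambda$, and $g_3$ affine. The hypothesis that $-\frac dc$ is not in the spectrum of $A$ says exactly that $g_1(A)=cA+d$ is invertible. Hence $f(A)=g_3\bigl(g_2(g_1(A))\bigr)$ is a legitimate functional calculus identity, and each intermediate operator satisfies the hypothesis needed at the next step. By uniqueness of the collineation extending a given action on the asymptotic points, $(g_3\circ g_2\circ g_1)_*=(g_3)_*\circ(g_2)_*\circ(g_1)_*$. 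So it suffices to treat three kinds of map: translations $\lambda\mapsto\lambda+b$, complex scalings $\lambda\mapsto a\lambda$ with $a\neq0$, and the inversion $\lambda\mapsto1/\lambda$ for invertible $A$.

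\textbf{Verification for the generators.} For $\mathbf x\neq0$ with $\mathbf y=A\mathbf x$, write $w=\langle\mathbf y,\mathbf x\rangle/|\mathbf x|_2^2$ (identifying the first two pCK coordinates with a complex number) and $z=|\mathbf y|_2^2/|\mathbf x|_2^2$.
\begin{itemize}
\item \emph{Translation.} For $A+b$ and the same $\mathbf x$, the point becomes $\bigl(w+b,\ z+2\Rea(\bar b w)+|b|^2\bigr)$. This is the map $\mathrm{Up}_F$ with $F(\mathbf w)=\mathbf w+\mathbf b$, which is an $h$-collineation.
\item \emph{Scaling.} For $aA$ the point becomes $(aw,\ |a|^2z)$, which is $\mathrm{Up}_{\mathbf A}$ for the similarity given by multiplication by $a$.
\item \emph{Inversion.} For $A^{-1}$, use the vector $\mathbf x'=\mathbf y$, so that $A^{-1}\mathbf x'=\mathbf x$. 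Then $\langle \mathbf x,\mathbf y\rangle/|\mathbf y|_2^2=\bar w/z$ and $|\mathbf x|_2^2/|\mathbf y|_2^2=1/z$. The resulting map $(w,z)\mapsto(\bar w/z,\ 1/z)$ is projective: in homogeneous coordinates it is $(x,y,z,t)\mapsto(x,-y,t,z)$. It preserves the absolute $x^2+y^2=zt$, and on $\iota_{\mathrm{pCK}}(\lambda)=(\lambda,|\lambda|^2)$ it gives $\bigl(\bar\lambda/|\lambda|^2,\,1/|\lambda|^2\bigr)=\iota_{\mathrm{pCK}}(1/\lambda)$, including the exchange of $(0,0,0)$ and $\infty_{\langle0,0,1\rangle}$.
\end{itemize}
In each case the explicit projective map preserves the base set and agrees with the generator on the asymptotic points, so by uniqueness it is $(g_i)_*$.

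\textbf{Equality of sets.} For the affine generators the vector $\mathbf x$ is unchanged, so the two ranges match point by point. For the inversion, $\mathbf x\mapsto A\mathbf x$ is a bijection of the nonzero vectors because $A$ is invertible, so again the sets match. Applied to all $\mathbf x$, this gives $\DW_{\mathrm{pCK}}(g(A))=g_{\mathrm{pCK}}(\DW_{\mathrm{pCK}}(A))$ for each generator, and composing the three steps yields the theorem.

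The step I expect to need the most care is the inversion step, together with the bookkeeping of invertibility in the decomposition. One must use the vector substitution $\mathbf x'=A\mathbf x$ rather than the same $\mathbf x$, must recognise the complex conjugation $\langle\mathbf x,\mathbf y\rangle=\overline{\langle\mathbf y,\mathbf x\rangle}$, and must confirm that the resulting map is the orientation-preserving $f_*$ attached to $1/\lambda$ and not its reflected version. For unbounded $A$, domain issues would also have to be addressed; I would restrict to bounded operators, or note that the computation goes through verbatim on the graph of $A$.
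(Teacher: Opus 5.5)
Your argument is correct. The paper gives no proof of its own here; it only refers to Davis \cite{D1}, \cite{D2}, noting that this is the one nontrivial fact among Theorems \ref{thm:DWronc}--\ref{thm:DWfonc}. Your proposal is therefore a self-contained replacement for that citation rather than a variant of an argument in the text. The following steps all check out:
the decomposition $f=g_3\circ g_2\circ g_1$ with $g_1(A)=cA+d$ invertible;
the pCK formulas for translations and scalings;
the projective involution $(x,y,z,t)\mapsto(x,-y,t,z)$ obtained from the substitution $\mathbf x'=A\mathbf x$;
and the transfer to BCK by conjugating with \eqref{eq:can2}.

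There are two small points. First, your scaling map $(w,z)\mapsto(aw,|a|^2z)$ is the right one, but it is not literally the paper's $\mathrm{Up}_{\mathbf A}$. The paper's formula has $\|\mathbf A\|z$ where $\|\mathbf A\|^2z$ is meant. So rely on your own direct check: the map is linear in homogeneous coordinates, preserves $x^2+y^2=zt$, and sends $\iota_{\mathrm{pCK}}(\lambda)$ to $\iota_{\mathrm{pCK}}(a\lambda)$. Second, the orientation worry you raise for the inversion is already settled by your computation. The map sends $\iota_{\mathrm{pCK}}(\lambda)$ to $\iota_{\mathrm{pCK}}(1/\lambda)$, not to $\iota_{\mathrm{pCK}}(1/\bar\lambda)$, and uniqueness of the collineation with prescribed action on asymptotic points does the rest.

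What your route buys is that it uses only the explicit collineations of Subsection \ref{ss:hypspace}; the price is the generator bookkeeping. A one-step alternative fits the linear-relation viewpoint the paper attributes to Davis, where inversion is just a flip of the graph. Map graph pairs by $(\mathbf x,\mathbf y)\mapsto(d\mathbf x+c\mathbf y,\,b\mathbf x+a\mathbf y)$. This takes pairs with $\mathbf y=A\mathbf x$, $\mathbf x\neq0$, bijectively onto pairs with $\mathbf y'=f(A)\mathbf x'$, $\mathbf x'\neq0$; invertibility of $cA+d$ is used exactly here. Consider the Hermitian Gram matrix $G$ of $(\mathbf x,\mathbf y)$, whose entries $|\mathbf x|_2^2$, $\langle\mathbf y,\mathbf x\rangle$, $|\mathbf y|_2^2$ are the homogeneous pCK coordinates. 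Under this map, $G$ undergoes the fixed congruence $G\mapsto N^*GN$, where $N$ has rows $(d,b)$ and $(c,a)$. This is a linear map preserving positive semidefiniteness and rank. On rank-one $G$ it realizes $\lambda\mapsto f(\lambda)$, so it is $f_{\mathrm{pCK}}$ for all $f$ at once, with no decomposition needed.
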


\begin{theorem}[Wielandt, Davis]
\plabel{thm:DWfonc}
The asymptotic points in  $\DW_{*}(A)$ are the $\iota_*(\lambda)$ where $\lambda$ is an eigenvalue of $A$.
\end{theorem}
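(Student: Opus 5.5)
The statement to prove is Theorem \ref{thm:DWfonc}: the asymptotic points of $\DW_*(A)$ are exactly the $\iota_*(\lambda)$ with $\lambda$ an eigenvalue of $A$. I would work in the pCK model and transfer to BCK through \eqref{eq:can2}. Since $\DW_{\mathrm{pCK}}(A)$ never contains $\infty_{\langle0,0,1\rangle}$ (for $\mathbf x\neq0$ all coordinates are finite), only finite asymptotic points $(x,y,z)$ with $x^2+y^2=z$ need to be handled.

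\textbf{Computation.} Take $\mathbf x\neq0$ and $\mathbf y=A\mathbf x$, and normalize $|\mathbf x|_2=1$. The point of the shell is $(\Rea\langle\mathbf y,\mathbf x\rangle,\Ima\langle\mathbf y,\mathbf x\rangle,|\mathbf y|_2^2)$. It is asymptotic exactly when
\[|\langle \mathbf y,\mathbf x\rangle|^2=|\mathbf y|_2^2\,|\mathbf x|_2^2 .\]
By the equality case of the Cauchy--Schwarz inequality, this happens if and only if $\mathbf y$ and $\mathbf x$ are linearly dependent. Because $\mathbf x\neq0$, that means $A\mathbf x=\lambda\mathbf x$ for some $\lambda\in\mathbb C$, namely $\lambda=\langle\mathbf y,\mathbf x\rangle$.

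\textbf{Identifying the point.} In that case $\langle \mathbf y,\mathbf x\rangle=\lambda$ and $|\mathbf y|_2^2=|\lambda|^2$, so the point is $(\Rea\lambda,\Ima\lambda,|\lambda|^2)=\iota_{\mathrm{pCK}}(\lambda)$. Conversely, if $\lambda$ is an eigenvalue with eigenvector $\mathbf x$, the same computation shows $\iota_{\mathrm{pCK}}(\lambda)\in\DW_{\mathrm{pCK}}(A)$. This proves both inclusions in the pCK model.

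\textbf{Transfer to BCK.} I would check directly that the correspondence \eqref{eq:can2} takes $\iota_{\mathrm{pCK}}(\lambda)$ to $\iota_{\mathrm{BCK}}(\lambda)$, which is immediate from the formulas. Since this correspondence maps $\DW_{\mathrm{pCK}}(A)$ onto $\DW_{\mathrm{BCK}}(A)$ and asymptotic points onto asymptotic points, the statement carries over. One could instead repeat the Cauchy--Schwarz argument in BCK coordinates, where $x^2+y^2+z^2=1$ reduces to the same equality case.

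\textbf{Caveats.} There is no real obstacle. The only points needing care are the convention of what ``eigenvalue'' means for unbounded or multivalued $A$ (here it means a point eigenvalue), and the exclusion of $\infty$, which has already been dealt with.
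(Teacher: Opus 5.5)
Your proof is correct, and it agrees with the paper, which gives no argument of its own: it refers to Davis and remarks that only the M\"obius transformation property (Theorem \ref{thm:DWtrans}) among the three quoted facts is nontrivial. Your Cauchy--Schwarz equality-case argument is exactly that routine verification: asymptoticity in pCK coordinates reduces to $|\langle\mathbf y,\mathbf x\rangle|^2=|\mathbf y|_2^2|\mathbf x|_2^2$, the $\infty$ point is never attained since $\mathbf x\neq0$, and \eqref{eq:can2} sends $\iota_{\mathrm{pCK}}(\lambda)$ to $\iota_{\mathrm{BCK}}(\lambda)$.
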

\begin{proof}[Proofs]
See Davis \cite{D1}, \cite{D2}.
(Only Theorem \ref{thm:DWtrans} is nontrivial.)
\end{proof}

\subsection{The elliptical range theorem for the shell
(statement)
}\plabel{ss:DW}
~\\
\begin{theorem}[Wielandt, Davis]
\plabel{thm:DWdonc}
(The qualitative elliptical range theorem for the Davis--Wielandt shell.)

Suppose that $A$ is a linear operator on a $2$-dimensional complex Hilbert space.
We have the following possibilities:

(i) $A$ has a double eigenvalue $\lambda$, and $A$ is normal (thus $A=\lambda \Id$).

Then $\DW_*(A)$ contains only the point $\iota_*(\lambda)$.

(ii)  $A$ has two different eigenvalues $\lambda_1\neq\lambda_2$, and $A$ is normal.

Then $\DW_*(A)$ is the asymptotically closed $h$-line connecting $\iota_*(\lambda_1)$ and $\iota_*(\lambda_2)$.

(iii) $A$ has a double eigenvalue $\lambda$, and $A$ is not normal.

Then $\DW_*(A)$ is an asymptotically closed $h$-horosphere with  asymptotical point $\iota_*(\lambda)$.
In the $\mathrm{BCK}$ model this is an ellipsoid, whose equation is linearly
generated by the quadratic equation of the unit sphere
and the equation of the double plane tangent to unit sphere at $\iota_*(\lambda)$.

(iv) $A$ has two different eigenvalues $\lambda_1\neq\lambda_2$, and $A$ is not normal.

Then $\DW_*(A)$ is the an asymptotically closed $h$-tube around the $h$-line connecting $\iota_*(\lambda_1)$ and $\iota_*(\lambda_2)$.
In the $\mathrm{BCK}$ model this is an ellipsoid, whose equation is linearly
generated by the quadratic equation of the unit sphere
and the quadratic equation of the union of planes tangent to unit sphere at $\iota_*(\lambda_1)$ and $\iota_*(\lambda_2)$.
\end{theorem}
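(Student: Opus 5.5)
We reduce each case to a canonical representative using Theorems \ref{thm:DWronc} and \ref{thm:DWtrans}, compute the shell of that representative directly, and transport the answer back. By Schur triangularization and Theorem \ref{thm:DWronc}, we may assume $A=\bem \lambda_1 & \mu\\ 0&\lambda_2\eem$ with $\mu\geq0$ (conjugating by a diagonal unitary makes $\mu$ real and nonnegative). Normality is equivalent to $\mu=0$.

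\textbf{Normal cases (i), (ii).} Put $\mathbf x=(x_1,x_2)$ with $|x_1|^2=t$, $|x_2|^2=1-t$. Then $\langle A\mathbf x,\mathbf x\rangle=t\lambda_1+(1-t)\lambda_2$ and $|A\mathbf x|^2=t|\lambda_1|^2+(1-t)|\lambda_2|^2$. Hence $\DW_{\mathrm{pCK}}(A)$ is the Euclidean segment $\{t\iota_{\mathrm{pCK}}(\lambda_1)+(1-t)\iota_{\mathrm{pCK}}(\lambda_2)\}$. In a projective model this segment is exactly the asymptotically closed $h$-line, or a single point if $\lambda_1=\lambda_2$. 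Equation \eqref{eq:can2} carries this to BCK, since projective maps preserve segments.

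\textbf{Non-normal cases (iii), (iv).} Now $\mu>0$. The conditions of Theorem \ref{thm:DWtrans} hold for affine maps $f(\lambda)=a\lambda+b$, with no spectral restriction. For (iii), use such an $f$ to reach $A=\bem 0&\mu\\0&0\eem$, noting $f(A)$ is still nonnormal. We also want to send the eigenvalue to $\infty$, i.e.\ to $(0,0,1)_{\mathrm{BCK}}$. The cleanest route is to compute directly for $\bem0&\mu\\0&0\eem$ in pCK. For $\mathbf x=(x_1,x_2)$ with $|\mathbf x|=1$ we get $\mathbf y=(\mu x_2,0)$, so $\langle\mathbf y,\mathbf x\rangle=\mu x_2\bar x_1$ and $|\mathbf y|^2=\mu^2|x_2|^2$. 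Writing $x_2\bar x_1=w$ with $|w|^2=|x_2|^2(1-|x_2|^2)$, the point is $(\mu w,\mu^2 s)$ with $s=|x_2|^2\in[0,1]$ and $|w|^2=s(1-s)$. This is the surface $X^2+Y^2=Z(1-Z/\mu^2)$, i.e.\ $X^2+Y^2+Z^2/\mu^2-Z=0$.

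We then check that it is an asymptotically closed $h$-horosphere at $\iota(0)=(0,0,0)_{\mathrm{pCK}}$. Apply the inversion $\lambda\mapsto1/\lambda$ (a collineation swapping $0$ and $\infty$), or equivalently verify that the equation is a combination of the absolute $Z-X^2-Y^2$ and the double tangent plane $Z^2$ at $(0,0,0)$, which it visibly is. The group $F_{\infty,\infty}$ description, conjugated, then identifies it as a horosphere orbit. The general (iii) follows by transporting with $f_*$.

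For (iv), pick an affine $f$ with $f(\lambda_1)=0$, then a Möbius map with $f(\lambda_2)=\infty$. Directly, take $A=\bem 1&\mu\\0&-1\eem$, whose eigenvalues map to $(0,0,\pm1)_{\mathrm{BCK}}$; use the affine normalization $\lambda_{1,2}=\pm1$. Compute $\DW_{\mathrm{BCK}}$ with $|\mathbf x|=1$, and show it satisfies \eqref{eq:dorocc} with some $c>0$ depending on $\mu$ (one expects $c=4/\mu^2$ or similar). This requires eliminating the parameters $s,\arg w$: the $z$-coordinate is $\frac{|\mathbf y|^2-1}{|\mathbf y|^2+1}$, and $x^2+y^2$ is a rational function of $s$. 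Surjectivity onto the whole surface comes from letting $\arg w$ rotate and $s$ sweep $[0,1]$, using continuity and the $F_{0\infty}$ rotational symmetry (diagonal unitaries rotate $\arg w$). Theorem \ref{thm:DWfonc} confirms that the asymptotic points are exactly $\iota(\pm1)$. Transport by $f_*$ then gives the general case, since $f_*$ maps $h$-lines to $h$-lines and tubes to tubes, and the pencil description is projectively invariant.

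The main obstacle is the explicit elimination in (iv) to get exactly the form \eqref{eq:dorocc}. If it resists, I would instead apply $\lambda\mapsto\lambda/(\lambda-1)$ to move the eigenvalues to $0,\infty$ in pCK, where \eqref{eq:doroc} is simpler. Even then, the map involves $(A-1)^{-1}$, which makes the matrix messier.
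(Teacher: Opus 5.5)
Your overall strategy is the paper's: reduce by unitary conjugation and applicable M\"obius maps to $\mathbf 0_2$, a multiple of $S_0$, and $L_t$ (your $\mu$ is $2t$), compute the shell, and transport back.

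Cases (i) and (ii) are fine, and your chord argument in pCK handles them directly, without Möbius transport. Case (iii) is also fine. To close it, write out the collineation induced by $\lambda\mapsto1/\lambda$: in homogeneous pCK coordinates it is $(X:Y:Z:W)\mapsto(X:-Y:W:Z)$. This map preserves $ZW=X^2+Y^2$ and carries your surface $X^2+Y^2+Z^2/\mu^2-ZW=0$ to $Z-X^2-Y^2=1/\mu^2$, i.e.\ \eqref{eq:horoc} with $c=1/\mu^2$. The paper uses the $180^\circ$ rotation of the BCK ball instead.

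The gap is in (iv), and it has two parts.

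First, you have misplaced the asymptotic points. We have $\iota_{\mathrm{BCK}}(\pm1)=(\pm1,0,0)$; the points $(0,0,\pm1)$ are $\iota_{\mathrm{BCK}}(\infty)$ and $\iota_{\mathrm{BCK}}(0)$. Consequently:
\begin{itemize}
\item The shell of $\bem1&\mu\\0&-1\eem$ does \emph{not} satisfy \eqref{eq:dorocc}. It is $x^2+(1+c)(y^2+z^2)=1$ with $c=4/\mu^2$; your constant is right, but the axis is not.
\item The pencil in the statement involves $x^2-1$, not $z^2-1$.
\item To match the shell with the model tube you need the $90^\circ$ rotation about the $y$-axis, which is an $h$-collineation.
\item The symmetry you invoke (diagonal unitaries, $F_{0\infty}$) is rotation about the wrong axis.
\item In BCK, neither $x^2+y^2$ nor $z$ is a function of $s$ alone; both involve $\Rea w$.
\end{itemize}

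Second, the computation you call the ``main obstacle'' is left undone, and your fallback is not available. No applicable M\"obius map sends an eigenvalue to $\infty$: Theorem \ref{thm:DWtrans} requires the pole to lie off the spectrum, and $(A-\Id)^{-1}$ does not exist when $1$ is an eigenvalue. Indeed, the shell of an operator on $\mathbb C^2$ never contains $\iota(\infty)$. So the $0$--$\infty$ position of \eqref{eq:doroc} can only be reached by moving the model tube, not the matrix. The same applies to your aside in (iii), though there you correctly apply the inversion to the set rather than to the matrix.

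The missing idea is that in your own parametrization the coordinates
\[ X=1-2s+\mu\Rea w,\qquad Y=\mu\Ima w,\qquad Z=1+\mu^2 s+2\mu\Rea w \]
are affine in $(s,\Rea w,\Ima w)$. This triple ranges over the whole sphere $(\Rea w)^2+(\Ima w)^2+(s-\tfrac12)^2=\tfrac14$. Equivalently, $(2\Rea z_2\bar z_1,2\Ima z_2\bar z_1,|z_2|^2-|z_1|^2,|z_2|^2+|z_1|^2)$ is $\iota_{\mathrm{BCK}}(z_2/z_1)$ in homogeneous coordinates.

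Hence the shell is the image of a sphere under an invertible affine map (determinant $\mu^2(4+\mu^2)$). Surjectivity is then automatic, and no elimination is needed. The paper factors the $4\times4$ matrix as a rotation followed by axis scaling and translation. This gives the pCK ellipsoid $\frac{X^2}{1+t^2}+\frac{Y^2}{t^2}+\frac{(Z-2t^2-1)^2}{4t^2(1+t^2)}=1$. Transcribing it by \eqref{eq:can2} yields $x^2+\frac{1+t^2}{t^2}(y^2+z^2)=1$, the rotated tube above.
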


This theorem is not stated in this form by Wielandt or Davis; but it
 contains information known to them, by Wielandt \cite{Wie0}, \cite{Wie}, Davis \cite{D1}, \cite{D2}.
Its proof merely combines the basic information and symmetry principles embodied in the previous theorems,
 and basic facts about analytic hyperbolic geometry.
More explicitly, it is demonstrated in \cite{L2}, Appendix B.

The statement above is qualitative only, because it lacks those hyperbolic metric informations
 which would specify the concrete hyperbolic quadrics.
For this reason, we will \textit{reprove} Theorem \ref{thm:DWdonc} with the following metric addendum:
\begin{theorem}[Addendum to Theorem \ref{thm:DWdonc}, on the inner metrical data]\plabel{thm:addDW1}
~

In the cases (ii)/(iv) the natural hyperbolic radius of the possibly degenerate $h$-tube $\DW_*(A)$
is
\begin{equation}
\frac12\arcosh\frac{U_A}{|D_A|}\equiv\arsinh\sqrt{\frac{U_A-|D_A|}{2|D_A|}}
\equiv\artanh\sqrt{\frac{U_A-|D_A|}{U_A+|D_A| }};
\plabel{eq:addDW1}
\end{equation}
\begin{commentx}
\[
\equiv\arcosh\sqrt{\frac{U_A+|D_A|}{2|D_A|}}
\]
\end{commentx}
where the quantities $U_A$, $|D_A|$ are defined in Subsection \ref{ssub:spectype}.

In cases (i)/(iii)
the formula \eqref{eq:addDW1} gives only the values $0$ (by declaration) and $+\infty$ respectively.
\end{theorem}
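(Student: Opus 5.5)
The plan is to reduce $A$ to a normal form, using unitary invariance (Theorem \ref{thm:DWronc}) and $h$-isometric invariance under affine Möbius maps (Theorem \ref{thm:DWtrans}), and then compute the tube radius explicitly in the BCK model. This relies on the values of $U_A$ and $|D_A|$ from Subsection \ref{ssub:spectype}. I assume they take the expected form $|D_A|=\frac14|\lambda_1-\lambda_2|^2$ and $U_A=|D_A|+\frac12\bigl(\tr(A^*A)-|\lambda_1|^2-|\lambda_2|^2\bigr)$, which matches the semi-axes $s^\pm$ of the ordinary elliptical range theorem. I would first check that the ratio $U_A/|D_A|$ is unchanged under unitary conjugation and under $A\mapsto \alpha A+\beta\Id$ with $\alpha\neq0$: both quantities scale by $|\alpha|^2$ and neither depends on $\beta$.

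\textbf{Normalization.} By Schur's theorem and Theorem \ref{thm:DWronc}, we may take $A=\bem \lambda_1 & b\\ 0&\lambda_2\eem$. In cases (ii)/(iv), the affine map $f(\lambda)=(\lambda-\frac{\lambda_1+\lambda_2}2)/\frac{\lambda_1-\lambda_2}2$ induces, by Theorem \ref{thm:DWtrans}, an $h$-isometry $f_*$, so the radius is preserved. A further diagonal unitary conjugation reduces to $A_t=\bem 1&t\\0&-1\eem$ with $t\ge0$. For this matrix $|D_{A_t}|=1$ and $U_{A_t}=1+\frac{t^2}2$. The eigenvalues go to $\iota_{\mathrm{BCK}}(\pm1)=(\pm1,0,0)$, so the axis is the $x_{\mathrm{BCK}}$-axis. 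By the rotated form of \eqref{eq:dorocc}, the tube is $x^2+(1+c)(y^2+z^2)=1$ with $c=1/\sinh^2 r$. Equivalently, for a point with $x_{\mathrm{BCK}}=0$ the pedal point on the axis is the origin, and then $\cosh r=(1-y^2-z^2)^{-1/2}$.

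\textbf{Computing $r$.} We know from Theorem \ref{thm:DWdonc} that $\DW_{\mathrm{BCK}}(A_t)$ is such a tube. It therefore suffices to find one unit vector $\mathbf x=(\alpha,\beta)$ with $\Rea\langle A_t\mathbf x,\mathbf x\rangle=0$ and evaluate the formula there. The most convenient choice additionally makes $\langle A_t\mathbf x,\mathbf x\rangle=0$. For real $\alpha,\beta$ this requires $\alpha^2-\beta^2+t\alpha\beta=0$, and then $|\mathbf y|_2^2=1+t^2\beta^2+2t\alpha\beta$, which gives $z_{\mathrm{BCK}}=(|\mathbf y|^2-1)/(|\mathbf y|^2+1)$. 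Substituting into $\cosh r=(1-z^2)^{-1/2}$, I expect to obtain $\cosh 2r=2\cosh^2r-1=1+\frac{t^2}2=U_{A_t}/|D_{A_t}|$, which is the first expression in \eqref{eq:addDW1}. The other two forms then follow from $\cosh 2r=1+2\sinh^2 r$ and $\tanh^2 r=\frac{\cosh2r-1}{\cosh2r+1}$. For case (ii), $t=0$ gives radius $0$, a degenerate tube, consistently. For cases (i)/(iii), $D_A=0$: in (iii) $U_A>0$, so the formula gives $+\infty$, matching the horosphere as a limit of tubes; in (i) the value is $0/0$, declared to be $0$.

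\textbf{Main obstacle.} The main difficulty is the explicit evaluation: choosing $(\alpha,\beta)$ so that the algebra stays clean, and confirming that the resulting expression for $z_{\mathrm{BCK}}$ simplifies exactly to $\cosh 2r=1+t^2/2$. If this particular vector is awkward, my fallback is to parametrize all unit vectors and verify directly that the image satisfies $x^2+(1+c)(y^2+z^2)=1$ with $c=2/t^2$, since $\sinh^2r=t^2/4$ from the claimed formula.
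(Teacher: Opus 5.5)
Your proposal is correct and follows essentially the paper's route: reduce by unitary conjugation and a M\"obius map to the representative with eigenvalues $\pm1$, then read off the tube radius at a point where the tube meets the $z_{\mathrm{BCK}}$-axis through its center. The only difference is how that point is found: the paper takes $(0,0,\pm t/\sqrt{1+t^2})$ from its explicit ellipsoid for $L_t$, whereas you obtain one directly from a vector with $\langle A_t\mathbf x,\mathbf x\rangle=0$. The computation you flag as the obstacle does close: the two admissible values $Y=|\mathbf y|_2^2=1+\frac{t^2}2\pm\frac t2\sqrt{t^2+4}$ are reciprocal, so $\cosh 2r=\frac12(Y+Y^{-1})=1+\frac{t^2}2=U_{A_t}/|D_{A_t}|$, exactly as you expected.
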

In the cases (i), (ii), (iv) the theorem characterizes  $\DW_*(A)$.
However, it does not characterize the horospheres of case (iii).
A lazy way around  this is
\begin{theorem}[Addendum to Theorem \ref{thm:DWdonc}, using ``external'' data]\plabel{thm:addDW2}
~

If the $A$ has eigenvalues $\lambda_1$, $\lambda_2$, and operator norm $\|A\|$, then
$\DW_*(A)$ is characterized as the possibly degenerate asymptotically closed
$h$-tube or $h$-horosphere, whose asymptotic points are  $\iota_*(\lambda_1)$ and $\iota_*(\lambda_2)$,
and it touches the plane $z_{\mathrm{pCK}}=\|A\|^2$ or $z_{\mathrm{BCK}}=\frac{\|A\|^2-1}{\|A\|^2+1}$
(from below), respectively, depending on the model used.
\end{theorem}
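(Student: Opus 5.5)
The claim is that, once the asymptotic points $\iota_*(\lambda_1),\iota_*(\lambda_2)$ are fixed, $\DW_*(A)$ is pinned down by one further number: the height at which the shell touches a horizontal plane from below. In the pCK model that height is the maximum of the third coordinate $|\mathbf y|_2^2/|\mathbf x|_2^2=|A\mathbf x|_2^2/|\mathbf x|_2^2$, i.e. $\|A\|^2$.

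The plan has three steps.

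\textbf{Step 1: the height is attained.} In $\DW_{\mathrm{pCK}}(A)$ the third coordinate is $|A\mathbf x|_2^2/|\mathbf x|_2^2$. In dimension $2$ its supremum is attained, and it equals $\|A\|^2$. So $\DW_{\mathrm{pCK}}(A)$ lies in the half-space $z_{\mathrm{pCK}}\le\|A\|^2$ and meets the plane $z_{\mathrm{pCK}}=\|A\|^2$. Since $\DW_{\mathrm{pCK}}(A)$ is a compact convex set (a point, segment or ellipsoid by Theorem \ref{thm:DWdonc}), this is exactly what "touches from below" means. The map $z\mapsto\frac{z-1}{z+1}$ is monotone and, by \eqref{eq:can2}, carries horizontal planes to horizontal planes. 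Hence the BCK version follows, with height $\frac{\|A\|^2-1}{\|A\|^2+1}$.

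\textbf{Step 2: uniqueness within each family.} By Theorem \ref{thm:DWdonc}, $\DW_*(A)$ belongs to a family determined by the eigenvalues: the point $\iota_*(\lambda)$, the $h$-line through $\iota_*(\lambda_1),\iota_*(\lambda_2)$, the horospheres at $\iota_*(\lambda)$, or the tubes around that $h$-line. In the degenerate cases (i)/(ii) there is only one member, so nothing needs proving; the normal operators are covered, with the touching property holding automatically. Call a member "admissible" if it lies within $z_{\mathrm{pCK}}\le\|A\|^2$ and touches that plane. The point is to show that such a member is unique within its family, which I do by monotonicity.

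\emph{Horospheres.} For a fixed asymptotic point the horospheres are nested: increasing the "$c$"-parameter in \eqref{eq:horocc}, or its transform by the collineation $f_*$ taking $(0,0,1)$ to $\iota_*(\lambda)$, shrinks the body. If $\lambda\neq\infty$, which holds because $A$ is a matrix, the collineation is affine in the pCK picture up to the fixing of $\infty$. A cleaner route: with $\lambda$ finite, translate by $f(\mu)=\mu-\lambda$ using $\mathrm{Up}$-type maps. Any strictly nested family of compact convex bodies, each containing the common asymptotic point $\iota_{\mathrm{pCK}}(\lambda)$ at height $|\lambda|^2$, has a strictly increasing maximum height. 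So at most one member touches the plane at a given height.

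\emph{Tubes.} The same holds for tubes around a fixed $h$-line: in \eqref{eq:dorocc}, transported by the appropriate collineation, the radius parameter orders them by strict inclusion. Their union is the whole space, and as the radius tends to $0$ they shrink to the asymptotically closed line. So the maximum height of $z_{\mathrm{pCK}}$ over the closed tube is continuous and strictly increasing in the radius.

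\textbf{Main obstacle: strictness.} The hardest step is verifying this strict monotonicity of the maximum height, and it is not automatic. Nested compact sets could share their maximal point; in particular, the top could be an asymptotic point. That happens only when the topmost point is $\iota(\lambda_j)$, and it must be ruled out.

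My first attempt would be to use that the bodies are strictly nested away from the asymptotic points: a larger tube or horosphere contains the smaller one in its interior, except at the shared asymptotic points. I would then argue that the maximum of $z_{\mathrm{pCK}}$ over a tube or horosphere is not attained only at an asymptotic point. At such a point the surface is tangent to the absolute paraboloid $z=x^2+y^2$. That tangent plane is not horizontal unless the point is the origin, which is the minimum of the paraboloid, not a maximum. So the horizontal support plane touches at a nonasymptotic point lying in the interior of any strictly larger member. Strict monotonicity follows.

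Combining the existence in Step 1 with the uniqueness in Step 2 gives the characterization.
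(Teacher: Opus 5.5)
Your proposal is correct and follows the same route as the paper: in the pCK model the possibly degenerate tubes/horospheres with the given asymptotic points are nested, the top height of $\DW_{\mathrm{pCK}}(A)$ is $\|A\|^2$, this top height grows strictly with the member, and the monotone map $z\mapsto\frac{z-1}{z+1}$ carries everything over to BCK. Your tangency argument at the asymptotic points supplies the strictness that the paper only asserts, with two small fixes: the comparison must also cover the case where the smaller member is the degenerate line or point, whose top may be an asymptotic point but still lies strictly below the unique, non-asymptotic top of any nondegenerate member; and ``compact convex'' should refer to the convex hulls, since the shell itself is an ellipsoid surface.
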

All these statements will be proven in Section \ref{sec:DWP}.
\snewpage

\section{The conformal range}
\plabel{sec:CR}

\subsection{On the hyperbolic plane
(review)
}~\\

\textbf{Complex parametrization and collineations.}
Regarding models  BCK, pCK, let
 $\pi^{[2]}_{*}:\overline{H^3_{*}}\rightarrow \overline{H^2_{*}}$ be the canonical projection
 from the hyperbolic $3$-space to the hyperbolic $2$-space
 by the elimination of the second coordinate, or, in other terms, setting it to $0$.
Geometrically, this is an $h$-orthogonal projection from the $h$-space $\overline{H^3_{*}}$ to the canonically embedded
 (asymptotically closed) $h$-plane $i^{[2]}(\overline{H^2_{*}})$.
~
Let $\iota^{[2]}_*=\pi^{[2]}_*\circ\iota_*$, the mapping of the Riemann-sphere to the asymptotically closed hyperbolic plane.
Then, for $\lambda\in\mathbb C$
\begin{equation}
\iota_{\mathrm{pCK}}^{[2]}(\lambda)=\left(\Rea\lambda,|\lambda|^2\right)
\quad\,\,\text{and}\quad\,\,
\iota_{\mathrm{BCK}}^{[2]}(\lambda)=\left(\frac{2\Rea\lambda}{|\lambda|^2+1},
\frac{|\lambda|^2-1}{|\lambda|^2+1}\right);
\plabel{eq:jimago}
\end{equation}
 and $\iota_{\mathrm{pCK}}^{[2]}(\infty)=\infty_{\langle0,1\rangle}$, $\iota_{\mathrm{BCK}}^{[2]}(\infty)=(0,1)$.
The map $\iota^{[2]}_*$ is onto; one-to-one from $\mathbb R\cup\{\infty\}$ onto the asymptotic points,
 and two-to-one from $\mathbb C\setminus\mathbb R$ to non-asymptotic points.

\textit{All} collineations (i.~e.~congruences, i.~e.~isometries) of the hyperbolic plane (not only just orientation-preserving ones)
 are induced by real fractional linear
 transformations $f:\lambda\mapsto \frac{ax+b}{cx+d}$ where $a,b,c,d\in\mathbb R$, $ad-bc\neq0$;
 such that $f_*$ restricts to our canonical $h$-plane.
Regarding that restriction, it is useful to know that for real M\"obius transformations $f$,
 the effects $f_*$ commute with  $\pi^{[2]}_*$.
Consequently,
\begin{equation}
f_*(\iota^{[2]}_*(\lambda) )=\iota^{[2]}_*(f(\lambda) ).
\plabel{eq:imago}
\end{equation}
(Here $\sgn(ad-bc)$ shows whether the $h$-collineation is orientation-preserving or not.)

As we have seen, the Riemann-sphere up to conjugation can be used to parametrize
 the asymptotically closed hyperbolic space.
In practice the part of the Riemann-sphere with nonnegative imaginary part is used for this purpose:
This is essentially the Poincar\'e half-plane model:
Its base set is $\overline{H}^2_{\mathrm{Ph}}=\{z\in\mathbb C\,:\,\Rea z\geq0\}\cup\{\infty\}$.
Here $\mathbb C$ is routinely identified with $\mathbb R^2$.
The canonical transcription to the projective plane models can be abstracted from \eqref{eq:jimago}; and
 the canonical transcription from the projective plane models to the Poincar\'e half-plane model $\mathrm{Ph}$ is given by
\begin{align*}
(x_{\mathrm{Ph}},z_{\mathrm{Ph}})
&=\left(x_{\mathrm{pCK}},\sqrt{z_{\mathrm{pCK}}-(x_{\mathrm{pCK}})^2 }   \right)
=\frac{(x_{\mathrm{BCK}},\sqrt{1-(x_{\mathrm{BCK}})^2-(z_{\mathrm{BCK}})^2 })}{1-z_{\mathrm{BCK}}}          .
\end{align*}
In terms of the complex parametrization, this yields (in accordance to the original idea)
\[\iota_{\mathrm{Ph}}^{[2]}(\lambda)=\left(\Rea\lambda,|\Ima\lambda| \right),\qquad \iota_{\mathrm{Ph}}^{[2]}(\infty)=\infty.\]
As \eqref{eq:imago} remains valid for Ph,
 the collineation effect for $w=x_{\mathrm{Ph}}+\mathrm iz_{\mathrm{Ph}}$ is given by
\[ f_{\mathrm{Ph}}^{[2]}: w\mapsto \left(\frac{aw+b}{cw+d}\right)^{\text{ conjugated if } ad-bc<0}.\]

We could have left out the Poincar\'e half-plane model by talking only about the parametrization by $\iota^{[2]}$,
 but there is little sense in hiding it.
It is well-known that $h$-lines in the Ph model look
like half-lines and half-circles perpendicular to the real line.

\snewpage

\textbf{Distance.}
In the BCK model,  the natural distance function  is given by
\begin{multline}\mathrm d^{\mathrm{BCK}}((x_{\mathrm{BCK}}, z_{\mathrm{BCK}}),(\tilde x_{\mathrm{BCK}}, \tilde z_{\mathrm{BCK}}))=\\
=\arcosh\left(\frac{1-x_{\mathrm{BCK}}\tilde x_{\mathrm{BCK}} - z_{\mathrm{BCK}}\tilde z_{\mathrm{BCK}}  }{\sqrt{1-(x_{\mathrm{BCK}})^2-(z_{\mathrm{BCK}})^2}\sqrt{1-(\tilde x_{\mathrm{BCK}})^2-(\tilde z_{\mathrm{BCK}})^2}}\right).
\plabel{eq:dis2}
 \end{multline}
In particular,
\begin{align}
\mathrm d^{\mathrm{BCK}}( (0,0),(s,0))
&=\arcosh\frac{1}{\sqrt{1-s^2}}
=\arsinh\frac{|s|}{\sqrt{1-s^2}}
=\artanh |s|
\plabel{eq:distex2}
\\
\notag
&=\frac12\arcosh\frac{1+s^2}{1-s^2}
=\frac12\arsinh\frac{2|s|}{1-s^2}
=\frac12\artanh\frac{2|s|}{1+s^2}
.
\end{align}
In the Ph model, the natural distance function  is given by
\begin{align}
\mathrm d^{\mathrm{Ph}}((x_{\mathrm{Ph}} ,z_{\mathrm{Ph}}),(\tilde x_{\mathrm{Ph}} ,\tilde z_{\mathrm{Ph}}))&
=\arcosh\left( 1+\frac{(x_{\mathrm{Ph}}-\tilde x_{\mathrm{Ph}})^2 + (z_{\mathrm{Ph}}-\tilde z_{\mathrm{Ph}})^2}{2z_{\mathrm{Ph}}\tilde z_{\mathrm{Ph}}}\right)\plabel{eq:dis2Ph}\\
&=2\arsinh\left( \frac{\sqrt{(x_{\mathrm{Ph}}-\tilde x_{\mathrm{Ph}})^2 +(z_{\mathrm{Ph}}-\tilde z_{\mathrm{Ph}})^2}}{2\sqrt{z_{\mathrm{Ph}}\tilde z_{\mathrm{Ph}}}}\right).\notag
\end{align}
The latter formula shows well how the Euclidean metric deforms in the Ph model.
\snewpage

\textbf{Representative for horocycles.}
As in the case of hyperbolic space, we can argue
\begin{equation}
z_{\mathrm{pCK}}-(x_{\mathrm{pCK}})^2=c,
\plabel{eq:choro1}
\end{equation}
with $c>0$ will be the $h$-horocycles in the pCK model with asymptotic point $\infty_{\langle0,1\rangle}$.
Accordingly,
\begin{equation}
(x_{\mathrm{BCK}})^2+(z_{\mathrm{BCK}})^2-1+c(z_{\mathrm{BCK}}-1)^2=0
\plabel{eq:chorom}
\end{equation}
with $c>0$ will be the $h$-horocycles in the BCK model with asymptotic point $(0,1)$.
(They are linearly generated by the absolute $(x_{\mathrm{BCK}})^2+(z_{\mathrm{BCK}})^2-1=0$ and
the double tangent $(z_{\mathrm{BCK}}-1)^2=0$ at $(0,1)$.)
In the Ph model, our $h$-horosphere has equation
\[z_{\mathrm{Ph}}=\sqrt c,\]
as it is easy to see from \eqref{eq:choro1}.
In fact, such an equation for an $h$-horocycle should be quite familiar to those with any experience with the Ph model.

In the case $c=1$ the  respective equations are
\begin{equation}
(x_{\mathrm{BCK}})^2+2(z_{\mathrm{BCK}})^2-2z_{\mathrm{BCK}}=0
\plabel{eq:choro}
\end{equation}
and
\[z_{\mathrm{Ph}}=1.\]
~
Let us call this $h$-horocycle $\tilde C$.
By symmetry reasons, one can see that the map
\[(x_{\mathrm{Ph}}, z_{\mathrm{Ph}})\mapsto(x_{\mathrm{Ph}},1)\]
will be an orthogonal (pedal) projection to $\tilde C$.
Therefore, the signed distance from $\tilde C$ (negative inside the horocycle) will be given as
\begin{align*}
\mathrm d_{\tilde C}^*(x_*,z_*)&=\sgn(1-z_{\mathrm{Ph}})\mathrm d_{\tilde C}^{\mathrm{Ph}}
\left( (x_{\mathrm{Ph}} ,z_{\mathrm{Ph}} ), (x_{\mathrm{Ph}} ,1 )\right)\\
&=-\log z_{\mathrm{Ph}}\\
&=\log\left(\frac{1-z_{\mathrm{BCK}} }{\sqrt{1-(x_{\mathrm{BCK}})^2-(z_{\mathrm{BCK}})^2}}\right).
\end{align*}
One can see that the level sets of $\mathrm d_{\tilde C}$ are horocycles themselves (the parallel horocycles).
The level set with value $d$ corresponds to the earlier horocycles with $c=\exp(-2d)$.

\textbf{Representatives for $h$-distance lines.}
Similarly to the case of $h$-tubes, it yields
\begin{equation}
(1+c)(x_{\mathrm{BCK}})^2+ (z_{\mathrm{BCK}})^2-1 =0 ,
\plabel{eq:chorocc}
\end{equation}
with $c>0$, as the equations for asymptotically closed
 $h$-distance lines belonging to the $h$-line $x_{\mathrm{BCK }}=0$;
 that is with asymptotical points $(0,\pm1)$ in the BCK model.
~
(For the analytically minded, we remark that, in contrast to the $h$-tubes, \eqref{eq:chorocc} is
 better to be interpreted as linear, in fact, positive, combination, of the equations
 $(x_{\mathrm{BCK}})^2+(z_{\mathrm{BCK}})^2-1=0$ for the absolute and the double line $(x_{\mathrm{BCK}})^2=0$
 for the axis.)

\snewpage
~

\subsection{The conformal range}
~\\

\textbf{Basic properties.} As we have discussed before,
\begin{equation}
\DW_{*}^{\mathbb R}(A)=\pi^{[2]}_{*} (\DW_{*}(A)).
\plabel{eq:CRproj}
\end{equation}

We remark that regarding the Poincar\'e half-plane model,
\[\DW_{\mathrm{Ph}}^{\mathbb R}(A)=
\left\{
\left(\frac{\Rea\langle \mathbf y,\mathbf x\rangle}{|\mathbf x|_2^2}  ,
\frac{\sqrt{|\mathbf x|_2^2|\mathbf y|_2^2-|\Rea \langle\mathbf y,\mathbf x\rangle|^2}}{|\mathbf x|_2^2}\right)
\,:\, A\mathbf x=\mathbf y, \, \mathbf x\neq 0
\right\}
\subset
 \overline{H}^2_{\mathrm{Ph}}.\]

\begin{theorem}
\plabel{thm:CRronc}
If $U$ is unitary, then  $\DW^{\mathbb R}_{*}(A)=\DW^{\mathbb R}_{*}(UAU^{-1})$.
\end{theorem}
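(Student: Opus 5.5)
The plan is to deduce this directly from the corresponding statement for the Davis--Wielandt shell, using the projection relation \eqref{eq:CRproj}. By Theorem \ref{thm:DWronc}, for unitary $U$ we have $\DW_*(A)=\DW_*(UAU^{-1})$. Applying the projection $\pi^{[2]}_*$ (which does not depend on $A$) to both sides and using $\DW^{\mathbb R}_*(A)=\pi^{[2]}_*(\DW_*(A))$ gives
\[\DW^{\mathbb R}_*(UAU^{-1})=\pi^{[2]}_*(\DW_*(UAU^{-1}))=\pi^{[2]}_*(\DW_*(A))=\DW^{\mathbb R}_*(A).\]

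For completeness, and to avoid relying on the cited theorem, I would also give the one-line direct check at the level of vectors. Let $B=UAU^{-1}$ and $\mathbf x'=U\mathbf x$, so that $\mathbf y'=B\mathbf x'=U\mathbf y$. Unitarity gives $|\mathbf x'|_2=|\mathbf x|_2$, $|\mathbf y'|_2=|\mathbf y|_2$ and $\langle\mathbf y',\mathbf x'\rangle=\langle\mathbf y,\mathbf x\rangle$. Hence every defining coordinate expression, in either the pCK or the BCK form, takes the same value for the pair $(\mathbf x',\mathbf y')$ as for $(\mathbf x,\mathbf y)$. Since $\mathbf x\mapsto U\mathbf x$ is a bijection of the nonzero vectors, the two sets coincide.

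If the Ph model is also meant by `$*$', the same argument applies verbatim, since its defining formula also uses only these invariant quantities. There is no real obstacle here; the only point requiring care is noting that the projection identity \eqref{eq:CRproj} holds for each model separately, so that the argument is valid for every choice of `$*$'.
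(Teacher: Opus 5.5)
Your proof is correct and matches the paper's. Like the paper, you deduce the statement from Theorem \ref{thm:DWronc} by applying $\pi^{[2]}_*$ via \eqref{eq:CRproj}; your added vector-level check (unitary invariance of $\langle\mathbf y,\mathbf x\rangle$, $|\mathbf x|_2$, $|\mathbf y|_2$) is also valid and makes the argument self-contained.
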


 \begin{theorem}
\plabel{thm:CRtrans}
If $f:\lambda\mapsto \frac{a\lambda+b}{c\lambda+d}$, $a,b,c,d\in\mathbb R$, $ad-bc\neq0$ is a real fractional linear
(i.~e.~real M\"obius) transformation, and $-\frac dc$ is not in the spectrum of $A$,
then for $f(A)=\frac{aA+b}{cA+d}$, the conformal range is given as
\[\DW^{\mathbb R}_*(f(A))=f_*(\DW^{\mathbb R}_*(A)).\]
\end{theorem}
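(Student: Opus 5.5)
The statement to prove is Theorem \ref{thm:CRtrans}. I would deduce it from its three-dimensional counterpart, Theorem \ref{thm:DWtrans}, together with the projection identity \eqref{eq:CRproj}, $\DW_{*}^{\mathbb R}(A)=\pi^{[2]}_{*} (\DW_{*}(A))$. Since $f$ has real coefficients, it is in particular a complex M\"obius transformation. The hypothesis that $-\frac dc$ is not in the spectrum of $A$ is exactly the one required there, so Theorem \ref{thm:DWtrans} gives $\DW_*(f(A))=f_*(\DW_*(A))$. Applying $\pi^{[2]}_*$ to both sides yields
\[\DW^{\mathbb R}_*(f(A))=\pi^{[2]}_*\bigl(f_*(\DW_*(A))\bigr).\]
It therefore suffices to know that $\pi^{[2]}_*\circ f_*=f_*\circ\pi^{[2]}_*$ for real $f$. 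Here the $f_*$ on the right-hand side is the restriction of the three-dimensional collineation to the canonical $h$-plane $y_*=0$, which is how the plane collineation is defined. Granting this, we get $\pi^{[2]}_*(f_*(\DW_*(A)))=f_*(\pi^{[2]}_*(\DW_*(A)))=f_*(\DW^{\mathbb R}_*(A))$, which is the claim.

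The main point, and the only one needing an argument, is the commutation $\pi^{[2]}_*\circ f_*=f_*\circ\pi^{[2]}_*$. The paper states it in the review, but I would justify it as follows.

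\textbf{Invariance of the plane.} For real $f$ we have $f(\bar\lambda)=\overline{f(\lambda)}$, so $f$ commutes with conjugation. Conjugation induces the $h$-reflection $\rho:(x_*,y_*,z_*)\mapsto(x_*,-y_*,z_*)$. By uniqueness of the collineation extending a given action on the asymptotic points, $f_*$ commutes with $\rho$. Hence $f_*$ maps the fixed-point set of $\rho$, the $h$-plane $y_*=0$, onto itself.

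\textbf{Preservation of orthogonal projection.} Since $f_*$ is an $h$-isometry preserving this plane, it maps $h$-geodesics perpendicular to the plane to $h$-geodesics perpendicular to the plane. Therefore it commutes with the $h$-orthogonal projection onto the plane, which is $\pi^{[2]}_*$.

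\textbf{Alternative route.} If one prefers to avoid the geodesic language, one can check the commutation on generators. Real M\"obius maps are generated by $\lambda\mapsto a\lambda+b$ and $\lambda\mapsto a\lambda$ with $a,b$ real, together with $\lambda\mapsto -1/\lambda$. For the first two, the explicit lifts $\mathrm{Up}_F$ and $\mathrm{Up}_{\mathbf A}$ in the pCK model visibly commute with deleting the second coordinate. For example, with $\mathbf b=(b,0)$ and $\mathbf A$ the identity, the third coordinate becomes $z+2bx+b^2$, which does not involve $y$. For $\lambda\mapsto-1/\lambda$, the BCK action is the rotation $(x,y,z)\mapsto(-x,y,-z)$ up to the reflection $\rho$, and it clearly commutes with the projection.

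Finally, the equality $\DW^{\mathbb R}_*(f(A))=f_*(\DW^{\mathbb R}_*(A))$ holds in the BCK and pCK models by the above. Since $f_*$ is compatible with the canonical transcriptions \eqref{eq:can20}, it holds in any model, including Ph.
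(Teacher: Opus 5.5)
Your proposal is correct and follows the paper's own route. The paper deduces Theorem \ref{thm:CRtrans} directly from Theorem \ref{thm:DWtrans} and \eqref{eq:CRproj}, using the commutation $f_*\circ\pi^{[2]}_*=\pi^{[2]}_*\circ f_*$ for real $f$, which it only asserts in its review of the hyperbolic plane; your reflection-symmetry argument and your check on generators supply the justification the paper leaves implicit (and for $\lambda\mapsto-1/\lambda$ the BCK action is in fact exactly $(x,y,z)\mapsto(-x,y,-z)$, so no correction by $\rho$ is needed).
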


\begin{theorem}
\plabel{thm:CRfonc}
The asymptotic points in  $\DW^{\mathbb R}_{*}(A)$ are the $\iota^{[2]}_*(\lambda)$ where $\lambda$ is a real eigenvalue of $A$.
(But any eigenvalue $\lambda$ implies $\iota^{[2]}_*(\lambda)\in\DW^{\mathbb R}_{*}(A)$.)
\end{theorem}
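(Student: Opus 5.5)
The plan is to deduce Theorem \ref{thm:CRfonc} from the projection identity \eqref{eq:CRproj} together with Theorem \ref{thm:DWfonc}. Everything will be done in the BCK model, where $\pi^{[2]}_{\mathrm{BCK}}$ simply deletes the second coordinate. The main point is to understand which points of $\DW_{\mathrm{BCK}}(A)$ can land on the unit circle under this projection.

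\textbf{Asymptotic points of the projection.} Take a point $(x,y,z)\in\DW_{\mathrm{BCK}}(A)$, so that $x^2+y^2+z^2\leq1$. Its image $(x,z)$ is asymptotic exactly when $x^2+z^2=1$. This forces $y=0$ and $x^2+y^2+z^2=1$, so the original point is itself an asymptotic point of the shell and lies on the canonical plane $y_{\mathrm{BCK}}=0$. By Theorem \ref{thm:DWfonc}, such a point equals $\iota_{\mathrm{BCK}}(\lambda)$ for some eigenvalue $\lambda$ of $A$. Its second coordinate is $2\Ima\lambda/(|\lambda|^2+1)$, and this vanishes, so $\lambda$ is real.

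The point $\infty$ causes no trouble: the only relevant point would be $(0,0,1)$, which is never attained by the shell of a linear operator. This is also reflected in the fact that $(0,1)\notin\DW^{\mathbb R}_{\mathrm{BCK}}(A)$. Conversely, let $\lambda$ be a real eigenvalue. Then $\iota_{\mathrm{BCK}}(\lambda)\in\DW_{\mathrm{BCK}}(A)$, again by Theorem \ref{thm:DWfonc}, or directly by taking an eigenvector $\mathbf x$. Its projection $\iota^{[2]}_{\mathrm{BCK}}(\lambda)$ lies on the unit circle because $y=0$. This gives the main statement.

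\textbf{The parenthetical claim.} For any eigenvalue $\lambda$, real or not, Theorem \ref{thm:DWfonc} gives $\iota_*(\lambda)\in\DW_*(A)$. Projecting yields $\iota^{[2]}_*(\lambda)=\pi^{[2]}_*(\iota_*(\lambda))\in\DW^{\mathbb R}_*(A)$. Alternatively, one can plug an eigenvector $\mathbf x$ with $\mathbf y=\lambda\mathbf x$ into the defining formula and obtain $(2\Rea\lambda/(|\lambda|^2+1),(|\lambda|^2-1)/(|\lambda|^2+1))$ directly.

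\textbf{Transfer to other models.} Model independence for pCK (and Ph) follows because the canonical transcriptions \eqref{eq:can20} carry asymptotic points to asymptotic points. They are compatible with $\iota^{[2]}_*$ by definition.

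\textbf{Main obstacle.} The only delicate step is the converse direction on the shell level: an asymptotic point of $\DW_*(A)$ must come from an eigenvalue. That is precisely Theorem \ref{thm:DWfonc}, which we may invoke. If it had to be reproved here, I would use equality in Cauchy--Schwarz, $|\langle\mathbf y,\mathbf x\rangle|=|\mathbf y|_2|\mathbf x|_2$, to force $\mathbf y$ parallel to $\mathbf x$.
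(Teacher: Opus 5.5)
Your proposal is correct and follows the same route as the paper, which deduces the result from Theorem~\ref{thm:DWfonc} via the projection \eqref{eq:CRproj} and states only that this is immediate. Your BCK observation, that $x^2+z^2=1$ forces $y=0$ and a point on the sphere, spells out exactly the property of $\pi^{[2]}_*$ that the paper leaves implicit.
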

In what follows, the $h$-eigenpoints of $A$ are the $\iota^{[2]}_*(\lambda)$,
where $\lambda$ is an eigenvalue of $A$.

\begin{proof}[Proofs]
These immediately follow from the corresponding theorems of Subsection \ref{ss:DWbas},
taking the properties of $\pi^{[2]}_{*}$ into account.
\end{proof}
Theorems \ref{thm:CRronc}--\ref{thm:CRfonc} are also naturally valid for $*=\mathrm{Ph}$.

\begin{remark}
\plabel{lem:real}
The real conformal range makes sense directly for operators acting on real Hilbert spaces.
In the real case, Theorem \ref{thm:CRronc} applies to orthogonal $U$.
Otherwise the theorems above remain valid.
In general, if $A^{\mathbb R}$ acts on a real Hilbert space then its conformal range
 then its conformal range is same as the conformal range of its canonical complexification $A^{\mathbb C}$,
 except possibly in the case when $A^{\mathbb R}$ is $2$-dimensional:

In the 2-dimensional case:
One can see (cf.~\cite{L2}) that
the conformal range
$\DW_{*}^{\mathbb R}(A^{\mathbb R})$ is either a $h$-circle, an $h$-horocycle, or a pair of $h$-hypercyles
or a degenerate version of these;
corresponding to the elliptic, parabolic, and hyperbolic cases for $A^{\mathbb R}$, respectively.
But considering the shell $\DW_{*}^{\mathbb R}(A^{\mathbb C})$, these cycles are filled in.
In terms of the Davis--Wielandt shell  $\DW_{*}(A^{\mathbb C})$, these correspond either to the case
of an $h$-tube with axis perpendicular to $i^{[2]}(\overline{H^2_{*}})$, or
an $h$-horosphere with asymptotic point on $i^{[2]}(\overline{H^2_{*}})$,
or  an $h$-tube with axis lying on $i^{[2]}(\overline{H^2_{*}})$ respectively.
\qedremark
\end{remark}
\snewpage

\subsection{The qualitative elliptical range theorem for the conformal range}\plabel{ss:CR}
~\\

A more detailed  qualitative description of the conformal range of $2\times2$ matrices
is given by:

\begin{theorem}
\plabel{thm:CRdonc}

Suppose that $A$ is a linear operator on a $2$-dimensional complex Hilbert space.
We have the following possibilities:

(i) $A$ has a double non-real eigenvalue $\lambda$, or two conjugate non-real eigenvalues $\lambda=\bar\lambda$, and $A$ is normal.

Then $\DW^{\mathbb R}_*(A)$ contains only the ordinary $h$-point $\iota^{[2]}_*(\lambda)$.

(ii) $A$ has two not equal, nor conjugate non-real eigenvalues $\lambda_1,\lambda_2$, and $A$ is normal.

Then $\DW^{\mathbb R}_*(A)$ is the $h$-segment connecting  $\iota^{[2]}_*(\lambda_1)$ and $\iota^{[2]}_*(\lambda_2)$.

(iii)  $A$ has two different real eigenvalues $\lambda_1,\lambda_2$, and $A$ is normal.

Then $\DW^{\mathbb R}_*(A)$ is the asymptotically closed $h$-line connecting $\iota^{[2]}_*(\lambda_1)$ and $\iota^{[2]}_*(\lambda_2)$.

(iv) $A$ has a non-real eigenvalue $\lambda_1$ and a real eigenvalue $\lambda_2$, and $A$ is normal.

Then $\DW^{\mathbb R}_*(A)$ is the asymptotically closed $h$-half line connecting  $\iota^{[2]}_*(\lambda_1)$ and $\iota^{[2]}_*(\lambda_2)$.

(v) $A$ has a double real eigenvalue $\lambda$, and $A$ is normal.

Then $\DW^{\mathbb R}_*(A)$ contains only the asymptotic $h$-point $\iota^{[2]}_*(\lambda)$.

(vi) $A$ has a double non-real eigenvalue $\lambda$, or two conjugate non-real eigenvalues $\lambda=\bar\lambda$, and $A$ is not normal.

Then $\DW^{\mathbb R}_*(A)$ is an $h$-circle.
Here $\iota^{[2]}_*(\lambda)$ is an interior point.

(vii) $A$ has two not equal, nor conjugate non-real eigenvalues $\lambda_1,\lambda_2$, and $A$ is not normal.

Then $\DW^{\mathbb R}_*(A)$ is a proper $h$-ellipse.
Here $\iota^{[2]}_*(\lambda_1)$ and $\iota^{[2]}_*(\lambda_2)$ are interior points.

(viii)  $A$ has two different real eigenvalues $\lambda_1,\lambda_2$, and $A$ is not normal.

Then $\DW^{\mathbb R}_*(A)$ is a $h$-distance band around the $h$-line connecting $\iota^{[2]}_*(\lambda_1)$ and $\iota^{[2]}_*(\lambda_2)$.

(ix) $A$ has a non-real eigenvalue $\lambda_1$ and a real eigenvalue $\lambda_2$, and $A$ is not normal.

Then $\DW^{\mathbb R}_*(A)$ is an $h$-elliptic parabola with asymptotic point $\iota^{[2]}_*(\lambda_2)$.
Here $\iota^{[2]}_*(\lambda_1)$ is an interior point.

(x) $A$ has a double real eigenvalue $\lambda$, and $A$ is not normal.

Then $\DW^{\mathbb R}_*(A)$ is a $h$-horodisk with asymptotic point $\iota^{[2]}_*(\lambda)$.

\end{theorem}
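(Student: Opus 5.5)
We take $\DW^{\mathbb R}_*(A)=\pi^{[2]}_*(\DW_*(A))$ from \eqref{eq:CRproj} and combine it with the qualitative description of $\DW_*(A)$ in Theorem \ref{thm:DWdonc}. The conformal range is therefore the $h$-orthogonal projection of a point, an $h$-line, an $h$-horosphere or an $h$-tube onto the canonical $h$-plane $y_*=0$. The theorem then reduces to identifying these projections. Throughout, Theorem \ref{thm:CRfonc} tells us which asymptotic points occur: exactly the images of real eigenvalues. Theorem \ref{thm:CRtrans} lets us move real eigenvalues by real M\"obius maps, which are $h$-collineations of the plane commuting with $\pi^{[2]}_*$. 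Theorem \ref{thm:CRronc} lets us apply unitary conjugation, for instance to reach upper triangular form.

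\textbf{Normal cases (i)--(v).} Here $\DW_*(A)$ is the point $\iota_*(\lambda)$ or the asymptotically closed $h$-line joining $\iota_*(\lambda_1)$ and $\iota_*(\lambda_2)$. Since $\pi^{[2]}_*$ is the restriction of a linear map in the BCK model, it sends segments to segments. The image is therefore the Euclidean segment joining $\iota^{[2]}_*(\lambda_1)$ and $\iota^{[2]}_*(\lambda_2)$, which may degenerate to a point. These endpoints are ordinary or asymptotic according as $\lambda_j$ is non-real or real. This gives the five cases: a point (when $\lambda_2\in\{\lambda_1,\bar\lambda_1\}$ non-real, or a double real eigenvalue), an $h$-segment, an $h$-line, or an $h$-half line.

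\textbf{Non-normal cases (vi)--(x).} Now $\DW_*(A)$ is an ellipsoid inside the BCK ball (a horosphere or a tube). Its linear projection is a solid Euclidean ellipse contained in the unit disk, avoiding $(0,1)$ because the shell does. By the analytic definitions adopted in the Introduction, the type is decided by how many boundary points it has and how it touches the unit circle there. By Theorem \ref{thm:CRfonc}, the boundary points are exactly the $\iota^{[2]}_*(\lambda)$ with $\lambda$ real. This already gives (vii) and (vi) (no asymptotic points, so an $h$-ellipse), (ix) (one asymptotic point), and (viii) (two asymptotic points).

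Several refinements remain. For (viii), apply a real M\"obius map to move $\lambda_{1,2}$ to $\pm1$, so that the axis becomes the $h$-line $x_{\mathrm{BCK}}=0$. The tube \eqref{eq:dorocc} around this line, which lies in the plane $y=0$, projects to \eqref{eq:chorocc}, which is a distance band. For (x), move $\lambda$ to $\infty$. The horosphere \eqref{eq:horocc} projects (eliminating $y$ by taking $y=0$, the extreme value) to \eqref{eq:chorom}, which is a horodisk. For (vi), the eigenvalues are $\lambda$ and $\bar\lambda$, or $\lambda$ double. After a real M\"obius map we may take $\lambda=\mathrm i$. The axis of the tube then joins $\iota(\mathrm i)$ and $\iota(-\mathrm i)$, which is perpendicular to the canonical plane; a horosphere at $\iota(\mathrm i)$ is handled by the same reflection symmetry. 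Both shapes are invariant under the real M\"obius maps fixing $\mathrm i$ (rotations about the centre $(0,0)$ of the plane), so the projection is a disk centred at $(0,0)$, that is, an $h$-circle with centre $\iota^{[2]}(\mathrm i)$. For (vii), we still have to show that the ellipse is not a circle. If it were a circle, its centre would be fixed by the symmetries. Instead, we use that the circle case forces $\DW_*(A)$ to be invariant under rotations about a perpendicular line. That requires the two asymptotic points $\iota(\lambda_1)$ and $\iota(\lambda_2)$ to be mirror images in the plane, i.e.\ $\lambda_2\in\{\lambda_1,\bar\lambda_1\}$, which is excluded. For (ix), similarly, we must show that the projection is not a horodisk. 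After a real M\"obius map we may put $\lambda_2=\infty$, so that the tube has axis from $\iota(\lambda_1)$ (with $\Ima\lambda_1\ne0$) to $(0,0,1)$. We then check directly from \eqref{eq:dorocc}, transported by a collineation, that the projected conic meets $(0,1)$ with tangency order less than hyperosculation.

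\textbf{Main obstacle.} The hardest steps are (vii) and (ix): excluding the circle and the horocycle, and confirming that the eigenpoints are interior foci. I would handle these by a normal-form computation. Take $A=\bem\lambda_1&t\\0&\lambda_2\eem$ with $t>0$. Then write $\DW^{\mathbb R}_{\mathrm{pCK}}(A)=\mathrm W(\mathrm D^{\mathbb R}(A))$ via \eqref{eq:doubac}, and read off the conic coefficients explicitly.
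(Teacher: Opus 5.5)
Your projection route, which combines \eqref{eq:CRproj} with Theorem \ref{thm:DWdonc}, is sound for the normal cases (i)--(v) and for (vi), (viii), (x). It is a legitimate alternative to the paper's route; the paper instead reduces to the representatives $\mathbf 0_2$, $S_\beta$, $L^\pm_{\alpha,t}$ of Lemma \ref{lem:preCR} and computes their ranges explicitly. However, (vii) and (ix) carry the real content, and the paper itself singles them out as needing extra thought on the projection route. In your proposal they are not proved.

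In (vii) everything rests on the claim that a circular projection ``forces $\DW_*(A)$ to be invariant under rotations about a perpendicular line''. That is exactly what has to be shown, and it does not follow from anything you established. A convex body can have a rotationally symmetric shadow without being rotationally symmetric: a triaxial Euclidean ellipsoid has circular orthogonal projections in suitable directions. So you need an argument specific to $h$-tubes, or a computation. In (ix) the check that the projected conic does not hyperosculate at $\iota^{[2]}_*(\lambda_2)$ is only announced. Your fallback, reading the conic off $\mathrm W(\mathrm D^{\mathbb R}(A))$ for a general triangular $A$, is not carried out. Even once the conic is known, with arbitrary $\lambda_1,\lambda_2$ you would still need an $h$-invariant test (membership in the pencil of the absolute and a double line) to recognize $h$-circles and horocycles in general position. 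The claim that the non-real $h$-eigenpoints are interior points in (vii) and (ix) is part of the statement, and it is also left to this unexecuted computation.

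The missing idea is to normalize \emph{before} computing. Real M\"obius maps and unitary conjugation act by $h$-isometries on the range. So in (vii) you may take $A=L^\pm_{\alpha,t}$ with $0<\alpha<\pi/2$, $t>0$, and in (ix) $A=S_\beta$ with $0<\beta<\pi/2$.

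In (vii) the range is then \eqref{eq:CRL}. It is centred at the BCK origin with semi-axes $\sqrt{(\cos^2\alpha+t^2)/(1+t^2)}\neq t/\sqrt{1+t^2}$. Its two reflection symmetries force any $h$-centre to be the origin, where $h$-circles are Euclidean circles. Hence it is a proper $h$-ellipse, and the eigenpoints $(\pm\cos\alpha,0)$ are visibly interior.

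In (ix) the range is \eqref{eq:CRS}. Its only asymptotic point is $(0,-1)$ and its boundary passes through $(0,0)$. The only horodisk with these properties is $x_{\mathrm{BCK}}^2+2z_{\mathrm{BCK}}^2+2z_{\mathrm{BCK}}\leq0$, whose horizontal semi-axis $\frac{\sqrt2}2$ exceeds $\frac{\sqrt2}2\cos\beta$, so the range is not a horodisk.

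Interiority can also be obtained within your framework. At $\iota_{\mathrm{BCK}}(\lambda)$ the shell is tangent to the unit sphere, whose tangent plane there contains the projection direction $(0,1,0)$ only if $\Ima\lambda=0$. By smoothness and convexity, such a point cannot project to the boundary of the shadow.

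Two smaller slips:
\begin{itemize}
\item No M\"obius map applicable to $A$ sends an eigenvalue to $\infty$. In (ix) and (x) you must apply the collineation $f_*$ to the shell itself, using that it commutes with $\pi^{[2]}_*$.
\item With $\lambda_{1,2}=\pm1$ the axis is $y_{\mathrm{BCK}}=z_{\mathrm{BCK}}=0$, not $x_{\mathrm{BCK}}=0$. So you need \eqref{eq:dorocc} and \eqref{eq:chorocc} rotated by $90^\circ$.
\end{itemize}
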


As much as Theorem \ref{thm:CRdonc} can be proven relatively easily from Theorem  \ref{thm:DWdonc} and \eqref{eq:CRproj};
 except case (vii) requires some thinking in order to show that it cannot be an $h$-circle;
 and case (ix)  that it cannot be an $h$-horodisk.
The problem is that in cases (vi), (vii), (ix), it does not say much
 about the interior $h$-eigenpoints $\iota_*^{[2]}(\lambda)$,
 and it also lacks the further metric information in the non-normal cases (vi)--(x),
 which would specify the conics (conical disks) concretely.
For these reason, we prove Theorem \ref{thm:CRdonc} in Section \ref{sec:CRP} with the following addendum:
\snewpage
\begin{theorem}[Addendum to Theorem \ref{thm:CRdonc}, on the inner metrical data]
\plabel{thm:addCR1}
~

(a) In the cases  (i)/(vi), (ii)/(vii), and (iv)/(ix)
the non-asymptotic $h$-eigenpoints act as foci for the synthetic presentation of the
possibly degenerate $h$-ellipses and $h$-elliptic parabolas respectively.
If the asymptotic points are interpreted as foci, then this extends to all cases.

(b) In the cases  (i)--(iv)/(vi)--(ix), the (generalized) minor semi-axis length is
\begin{equation}
s^-=\frac12\arcosh\left(1+\frac{U_A-|D_A| }{\max(E_A,|D_A|)}\right)\equiv\arsinh \sqrt{\frac{U_A-|D_A| }{2\max(E_A,|D_A|)}};
\plabel{eq:CRminor}
\end{equation}
\begin{commentx}
\[\equiv \artanh \sqrt{\frac{U_A-|D_A| }{\max(U_A+|D_A|, U_A-|D_A|+2E_A   )}}\]
\end{commentx}
where $U_A$, $|D_A|$ are defined in Subsection \ref{ssub:spectype},
and $E_A$ is defined in Subsection \ref{ssub:squant}.

In the cases (v) and (x), \eqref{eq:CRminor} would yield the values $0$ (by declaration) and $+\infty$, respectively.
\end{theorem}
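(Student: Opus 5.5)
The plan is to reduce everything to a few canonical matrices by the symmetries already available, and then compute there. First, by Theorems \ref{thm:CRronc} and \ref{thm:CRtrans}, the conformal range transforms equivariantly under unitary conjugation and under real M\"obius transformations. I will check (this will be part of the definitions in Subsections \ref{ssub:spectype} and \ref{ssub:squant}) that the ratios $U_A/|D_A|$ and $E_A/|D_A|$, or in the double-eigenvalue cases the appropriate homogeneous combinations, are invariant under $A\mapsto f(A)$ for real M\"obius $f$. Both the focal statement (a) and the minor semi-axis (b) are $h$-collineation invariant, so it suffices to prove them for normal forms.

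For the normal forms, the non-real eigenvalues are replaced by their representatives in the upper half-plane, on which the real M\"obius group acts transitively on pairs at given distance. In case (vii) I move the pair to $\mathrm i t$ and $\mathrm i/t$ (or to $\mathrm i t$ and $-\mathrm i/t$ if the original pair lay in opposite half-planes), with $t\geq1$. In case (ix) I take $\lambda_2=\infty$ and $\lambda_1=\mathrm i$, and in case (vi) I take $\lambda=\mathrm i$ (or $\pm\mathrm i$). After a unitary conjugation, $A$ is upper triangular with off-diagonal entry $\mu\geq0$, and $\mu$ is determined by $U_A$.

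Next I use the projection formula \eqref{eq:CRproj}. By Theorem \ref{thm:DWdonc} and Theorem \ref{thm:addDW1}, $\DW_{\mathrm{BCK}}(A)$ is the $h$-tube of radius $r=\frac12\arcosh(U_A/|D_A|)$ around the $h$-line $L$ joining $\iota(\lambda_1)$ and $\iota(\lambda_2)$. In case (ix) one endpoint of $L$ is $(0,0,1)$. The orthogonal projection to the plane $y_{\mathrm{BCK}}=0$ is the Euclidean vertical shadow in the BCK model. For the canonical tube quadric (obtained from \eqref{eq:dorocc} by an explicit collineation, or written directly via the distance to $L$ using \eqref{eq:dis3}), I get the boundary conic of the shadow by requiring that the quadratic in $y_{\mathrm{BCK}}$ have zero discriminant. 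By the symmetry of the normal form, the resulting conic is symmetric with respect to the $h$-line $x_{\mathrm{BCK}}=0$, which is the projection of $L$. Hence its major axis lies on that line, and the endpoints of the axis and of the minor axis can be read off directly.

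For (a), I verify the focal property in the normal form as follows. I compute the two vertices on the axis and the two co-vertices, obtaining $s^+$ and $s^-$ via \eqref{eq:distex2}. I then check that the projected eigenpoints $\iota^{[2]}(\lambda_i)$ lie on the axis at distance $s^{\mathrm f}$ from the center, with $\cosh s^+=\cosh s^-\cosh s^{\mathrm f}$ as in \eqref{eq:Pellip}. Since an analytic $h$-ellipse symmetric about a line is determined by its vertices and co-vertices, and the synthetic ellipse with these foci and this $s^+$ has the same vertices and co-vertices, this identifies the two. In case (ix) I instead verify \eqref{eq:Pellipar}: the distance from the tip to $\iota^{[2]}(\lambda_1)$ equals $\log\cosh s^-$, and the conic is tangent to the boundary at $(0,1)$ without hyperosculating. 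This matches the synthetic $h$-elliptic parabola with horocycle chosen accordingly, e.g. $\tilde C$ rescaled. The degenerate cases (i)--(v) follow by letting $\mu\to0$, and cases (viii) and (x) by continuity and direct comparison with \eqref{eq:chorocc} and \eqref{eq:chorom}.

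The main obstacle will be (b), and specifically explaining the $\max(E_A,|D_A|)$. Geometrically, the minor semi-axis of the shadow is the distance from the axis line to the farthest tube point in the perpendicular direction, computed at the point of $L$ nearest the plane. If $L$ crosses or touches the plane $y=0$, as happens when the eigenvalues lie in opposite half-planes or are real, the width is exactly $r$. This should be the regime $E_A\leq|D_A|$, where \eqref{eq:CRminor} reduces to \eqref{eq:addDW1}. Otherwise $L$ stays away from the plane at some minimal distance $\delta$, and the width is larger; the right-angled hyperbolic geometry (a Lambert-quadrilateral-type relation) suggests $\sinh s^-=\sinh r\cdot(\text{factor in }\delta)$. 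I will first compute $\delta$ in the normal form $\lambda_{1,2}=\mathrm i t,\mathrm i/t$, then show $\sinh^2 s^-=(U_A-|D_A|)/(2E_A)$ there. This identifies $E_A$ as the correct invariant, and the two branches of the max must coincide at the transition.
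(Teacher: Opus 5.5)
Your route to (b) differs from the paper's. You realize the range as the BCK shadow of the Davis--Wielandt tube and distinguish whether the tube axis $L$ meets the real plane. The paper instead computes the ranges of $L^\pm_{\alpha,t}$ and $S_\beta$ explicitly (Theorem~\ref{cor:CR}) and reads $s^-$ off Example~\ref{ex:dconfrep} and Theorem~\ref{rem:recoverBCK}. Your route can be made to work, but as written it contains an error and misses a case.

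\textbf{The error.} When $L$ stays at distance $\delta>0$ from the plane, the shadow is narrower than the tube, not wider. Take your normal form $\lambda_{1,2}=\mathrm i t,\mathrm i/t$ with $t>1$.
\begin{itemize}
\item The axis is the chord $x_{\mathrm{BCK}}=0$, $y_{\mathrm{BCK}}=\frac{2t}{1+t^2}$.
\item The plane $z_{\mathrm{BCK}}=0$ is perpendicular to it and cuts the tube in an $h$-disk of radius $r$. This disk is centred at distance $\delta$ from the real plane, with $\cosh\delta=\frac{t^2+1}{t^2-1}$, i.e.\ $\cosh^2\delta=E_A/|D_A|$.
\item The Lambert quadrilateral then gives $\sinh r=\sinh s^-\cosh\delta$, so $\sinh^2 s^-=\frac{U_A-|D_A|}{2E_A}<\sinh^2 r$.
\end{itemize}
Your own target formula already forces $s^-<r$ in this regime; this is the contraction described in Remark~\ref{rem:crossing}.

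\textbf{The missing case.} A double non-real eigenvalue belongs to (vi), and there the shell is a horosphere by Theorem~\ref{thm:DWdonc}(iii), not a tube. So $r=+\infty$, and neither Theorem~\ref{thm:addDW1} nor the $\delta$-argument applies. You need a separate computation. For $\lambda=\mathrm i$, the shadow of $x^2+y^2+z^2-1+c(y-1)^2\le0$ is the disk $x^2+z^2\le\frac1{1+c}$, so $\sinh^2 s^-=1/c$. You must still show $c=2/U_A$, e.g.\ via Theorem~\ref{thm:addDW2}, or by computing directly as the paper does with $L^+_{\pi/2,t}$. Alternatively, give a properly justified continuity argument.

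\textbf{Smaller points.}
\begin{itemize}
\item The normal form $\lambda_2=\infty$ in (ix) is not available for a matrix. A real M\"obius map sending a real eigenvalue to $\infty$ is never applicable to $A$, and no conformal range contains $(0,1)_{\mathrm{BCK}}$. This is harmless only if you move the tube geometrically by $f_*$, which commutes with $\pi^{[2]}_*$ regardless of applicability; but then there is no triangular matrix with entry $\mu$. Simpler: use $\lambda_2=0$ as in $S_\beta$.
\item Symmetry of the shadow about $x_{\mathrm{BCK}}=0$ does not by itself show this is the major axis; that must come out of the computation.
\item For (a), your argument (vertices, co-vertices, $\cosh s^+=\cosh s^-\cosh s^{\mathrm f}$, resp.\ \eqref{eq:Pellipar}) is the paper's ``proof with previous knowledge''. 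Matching finitely many points identifies the two curves only if you already know the synthetic locus is a BCK conic with the same symmetries. The paper's primary proofs avoid this classical input by verifying the focal inequalities algebraically, through \eqref{eq:ellipo4} and \eqref{eq:fonsa}.
\end{itemize}

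Once corrected, your approach gives a conceptual explanation of $\max(E_A,|D_A|)$. The minor semi-axis $s^-$ equals the tube radius exactly when the axis meets or touches the real plane ($E_A\le|D_A|$), and is reduced by the factor $1/\cosh\delta$ otherwise. This makes Remark~\ref{rem:crossing} precise.
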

\begin{theorem}[Continuation of Theorem \ref{thm:addCR1}] \plabel{note:addCR1}
~

In similar manner, the (generalized) major semi-axes length is
\[s^+=\frac12\arcosh\frac{U_A+E_A}{\left|E_A-|D_A|\right|};\]
\begin{commentx}
\[ \equiv\frac12\arcosh\left(\frac{E_A+|D_A|}{\left|E_A-|D_A|\right|}+\frac{U_A-|D_A|}{\left|E_A-|D_A|\right|} \right)
\equiv \artanh \sqrt{\frac{ \min(U_A+|D_A|, U_A-|D_A|+2E_A   ) }{\max(U_A+|D_A|, U_A-|D_A|+2E_A   )}} \]
\end{commentx}
and the half of the (generalized) focal distance is
\[s^{\mathrm f}=\frac12\arcosh\frac{E_A+|D_A|}{\left|E_A-|D_A|\right|}
. \]
\begin{commentx}
\[ \equiv \artanh \sqrt{\frac{\min(U_A+|D_A|, U_A-|D_A|+2E_A   )-(U_A-|D_A|)  }{\max(U_A+|D_A|, U_A-|D_A|+2E_A   )-(U_A-|D_A|)}} \]
\end{commentx}
\end{theorem}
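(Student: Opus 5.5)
We reduce to canonical representatives and then compute, the same way the minor semi-axis formula of Theorem \ref{thm:addCR1}(b) is to be obtained. By Theorems \ref{thm:CRronc} and \ref{thm:CRtrans}, the conformal range transforms covariantly under unitary conjugation and real M\"obius transformations. Both $s^+$ and $s^{\mathrm f}$ are $h$-collineation invariant, since they are intrinsic lengths of the conic. So the first step is to check that the right-hand sides are invariant under these operations. This means verifying that the ratios $U_A:E_A:|D_A|$ are unchanged, up to a common positive factor, under $A\mapsto UAU^{-1}$ and $A\mapsto f(A)$. This should follow from how the spectral quantities of Subsections \ref{ssub:spectype} and \ref{ssub:squant} are built from the eigenvalues $\lambda_1,\lambda_2$ and their conjugates, and from the non-normality defect.

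After this normalization, a non-real eigenvalue $\lambda_1$ can be moved to $\mathrm i$ by a real M\"obius map. A unitary conjugation then makes $A$ upper triangular, $A=\bem \lambda_1 & t\\ 0&\lambda_2\eem$ with $t\geq0$.

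In the $h$-ellipse cases (vi)/(vii) we use the synthetic description. By part (a), already proven, the foci are $\iota^{[2]}(\lambda_1)$ and $\iota^{[2]}(\lambda_2)$. Then $2s^{\mathrm f}$ is their distance, which we compute from \eqref{eq:dis2Ph} using $\iota^{[2]}_{\mathrm{Ph}}(\lambda)=(\Rea\lambda,|\Ima\lambda|)$. For the major semi-axis we have two options. One is to use \eqref{eq:Pellip}, $\cosh s^+=\cosh s^-\cosh s^{\mathrm f}$, with $s^-$ from \eqref{eq:CRminor}. The other is to locate the major-axis vertices directly as the extreme points of the ellipse on the focal line. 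We will take the first route and check that the product, transcribed through $\cosh 2x=2\cosh^2x-1$, simplifies to $\frac{U_A+E_A}{|E_A-|D_A||}$.

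For the focal distance, we need the identity
\[\cosh 2s^{\mathrm f}=\frac{E_A+|D_A|}{|E_A-|D_A||}\]
in terms of the eigenvalues. Presumably $E_A$ is designed so that $E_A-|D_A|$ and $E_A+|D_A|$ relate to $|\lambda_1-\lambda_2|^2$ and $|\lambda_1-\bar\lambda_2|^2$. Indeed, the Ph-distance satisfies $\cosh d=\frac{|\lambda_1-\bar\lambda_2|^2+|\lambda_1-\lambda_2|^2}{\text{(diff.)}}$-type formulas, for instance $\cosh d=1+\frac{|\lambda_1-\lambda_2|^2}{2\Ima\lambda_1\Ima\lambda_2}$. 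Concretely, we verify $\cosh d=\frac{|\lambda_1-\bar\lambda_2|^2+|\lambda_1-\lambda_2|^2}{|\lambda_1-\bar\lambda_2|^2-|\lambda_1-\lambda_2|^2}$, using $|\lambda_1-\bar\lambda_2|^2-|\lambda_1-\lambda_2|^2=4\Ima\lambda_1\Ima\lambda_2$, and match this with the definition of $E_A$. Degenerate normal cases (i)--(iv) follow by continuity, or directly, since the range is then a segment whose half-length equals $s^{\mathrm f}=s^+$.

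In the remaining cases the formulas must be read as generalized values, with possibly infinite values. In case (ix) the conic is an $h$-elliptic parabola, and $s^+=s^{\mathrm f}=+\infty$ should correspond to $E_A=|D_A|$ (a real eigenvalue). In the distance-band and horodisk cases (viii)/(x) the values are again infinite, or are to be interpreted through the canonical limiting process. We will check these directly on the triangular representatives: a real eigenvalue is exactly the vanishing of $E_A-|D_A|$, which makes the denominators vanish.

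The main obstacle is the algebraic identification. We have to recognize that $E_A\pm|D_A|$ encode the conjugate-pair differences, and push \eqref{eq:Pellip} through the $\arcosh$ half-angle identities, which requires care with the $\max(E_A,|D_A|)$ in \eqref{eq:CRminor}. If $E_A<|D_A|$ can occur, the absolute value $|E_A-|D_A||$ presumably accounts for choosing which of $\lambda_2,\bar\lambda_2$ is used as the focus. We would first test the identities on the explicit triangular matrix with $\lambda_1=\mathrm i$.
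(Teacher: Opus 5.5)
Your argument is correct. For $s^{\mathrm f}$ it is the paper's own argument. For $s^+$ it takes a different route.

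\textbf{The focal distance.} You use foci $=$ $h$-eigenpoints (Theorems~\ref{cor:CRER1}, \ref{cor:CRER2}) together with the $h$-eigendistance of Theorem~\ref{lem:eigdist}, which you recompute from \eqref{eq:dis2Ph} as in its first proof. The absolute value in $|E_A-|D_A||=|\Ima\lambda_1\Ima\lambda_2|$ is exactly what handles the reflection of $\lambda_2$ when $\Ima\lambda_1\Ima\lambda_2<0$.

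\textbf{The major semi-axis.} The paper reads it directly off the explicit BCK ellipse of the canonical representative: $s^+=\artanh\chi^+$ with $\chi^+=\sqrt{((\cos\alpha)^2+t^2)/(1+t^2)}$ (Theorem~\ref{cor:CR}, Discussion~\ref{ex:CRcont}, Theorem~\ref{rem:recoverBCK}). This treats all cases uniformly through the characteristic BCK values and needs no synthetic input. You instead obtain $s^+$ from $s^-$ and $s^{\mathrm f}$ via \eqref{eq:Pellip}, and the algebra closes. For $E_A>|D_A|$ one gets $(\cosh s^+)^2=\frac{U_A-|D_A|+2E_A}{2E_A}\cdot\frac{E_A}{E_A-|D_A|}$, i.e.\ $\cosh 2s^+=\frac{U_A+E_A}{E_A-|D_A|}$, and the case $|D_A|>E_A$ is similar. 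This is the same identity the paper verifies in the opposite direction in its second proof of Theorem~\ref{cor:CRER1}.

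\textbf{What each route buys.} Your route explains the shape of the formula through the hyperbolic Pythagorean theorem. The cost is that it only reaches the bounded cases; the asymptotic cases rest on $E_A-|D_A|=\Ima\lambda_1\Ima\lambda_2=0$ and the conventions. It also depends on parts (a) and (b) of Theorem~\ref{thm:addCR1} and on \eqref{eq:Pellip}.

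\textbf{Where part (a) comes from.} The paper's second proof of Theorem~\ref{cor:CRER1} locates the foci \emph{using} the already known $s^+$ and the same Pythagorean identity, so relying on it would be circular. The first, computational proof is safe. However, it already exhibits the distance sum $m^+=2s^+$, so with it your Pythagorean step is a consistency check rather than an independent derivation. It becomes independent only if the foci are located by other means, e.g.\ the Remark that finds them from the pencil of dual conics.

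\textbf{A slip in case (x).} There $s^{\mathrm f}$ is not infinite. Both $E_A+|D_A|$ and $E_A-|D_A|$ vanish, and the convention gives $s^{\mathrm f}=0$: a double focus at the asymptotic point. Only $s^+=+\infty$.
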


Theorem \ref{thm:addCR1} is sufficient to characterize the conformal range in cases (i)--(ix).
However, it is insufficient  in case (x), when the conformal range is a horodisk.
Again, the  lazy way to achieve that is by
\begin{theorem}[Addendum to Theorem \ref{thm:CRdonc}, using ``external'' data]\plabel{thm:addCR2}
~

If the $A$ has eigenvalues $\lambda_1$, $\lambda_2$, and operator norm $\|A\|$, then
$\DW_*(A)$ is characterized as the possibly degenerate $h$-ellipse or  asymptotically closed
$h$-elliptical parabola, whose generalized foci are  $\iota_*(\lambda_1)$ and $\iota_*(\lambda_2)$,
and it touches the line $z_{\mathrm{pCK}}=\|A\|^2$ or $z_{\mathrm{BCK}}=\frac{\|A\|^2-1}{\|A\|^2+1}$
(from below), respectively, depending on the model used.
\end{theorem}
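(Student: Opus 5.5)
The idea is that the foci already fix a one-parameter nested family of candidate conics, and the norm $\|A\|$ picks out the single member of that family that is the conformal range.

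\textbf{Step 1 (the tangency is automatic).} In the pCK model the second coordinate of a point of $\DW^{\mathbb R}_{\mathrm{pCK}}(A)$ is $|\mathbf y|_2^2/|\mathbf x|_2^2=|A\mathbf x|_2^2/|\mathbf x|_2^2$. In dimension $2$ its supremum is $\|A\|^2$, and it is attained at a maximizing unit vector. So $\DW^{\mathbb R}_{\mathrm{pCK}}(A)$ lies in the half-plane $z_{\mathrm{pCK}}\leq\|A\|^2$ and meets the line $z_{\mathrm{pCK}}=\|A\|^2$. The map $z_{\mathrm{pCK}}\mapsto z_{\mathrm{BCK}}=\frac{z_{\mathrm{pCK}}-1}{z_{\mathrm{pCK}}+1}$ from \eqref{eq:can20} is strictly increasing. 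Hence the same statement holds in the BCK model with the line $z_{\mathrm{BCK}}=\frac{\|A\|^2-1}{\|A\|^2+1}$. (The statement writes $\DW_*(A)$, but it refers to the conformal range $\DW^{\mathbb R}_*(A)$.)

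\textbf{Step 2 (the shape class and the foci).} By Theorem \ref{thm:CRdonc} together with Theorem \ref{thm:addCR1}(a), $\DW^{\mathbb R}_*(A)$ belongs to a one-parameter family $\mathcal F$ of closed convex sets whose generalized foci are $\iota^{[2]}_*(\lambda_1)$ and $\iota^{[2]}_*(\lambda_2)$. The family depends on the case.
\begin{itemize}
\item Ordinary foci $F_1,F_2$: $\mathcal F$ consists of the sets $\{\boldsymbol x:\mathrm d(F_1,\boldsymbol x)+\mathrm d(F_2,\boldsymbol x)\leq m\}$ with $m\geq\mathrm d(F_1,F_2)$. The smallest member is the segment (or point), which covers the normal cases (i), (ii).
\item One ordinary focus $F$ and one asymptotic focus $F_0$: $\mathcal F$ consists of the sets $\{\mathrm d(F,\boldsymbol x)+\mathrm d_C(\boldsymbol x)\leq 0\}$, where $C$ runs over the horocycles at $F_0$. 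Equivalently this is $\mathrm d(F,\cdot)+\mathrm d_{C_0}(\cdot)\leq t$ for a fixed horocycle $C_0$ and a parameter $t$. The degenerate member is the closed half-line of case (iv).
\item Two distinct asymptotic foci: $\mathcal F$ consists of the distance bands around the $h$-line joining them, with the line itself as degenerate member.
\item One double asymptotic focus: $\mathcal F$ consists of the horodisks at it, with the single point as degenerate member.
\end{itemize}
In every case $\mathcal F$ is totally ordered by inclusion. It is strictly increasing in the parameter, because each family is a sublevel family of one function with nonvanishing gradient away from the foci. In each case the members omit $(0,1)_{\mathrm{BCK}}$, since $\lambda_1,\lambda_2\neq\infty$.

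\textbf{Step 3 (uniqueness).} Let $\zeta(K)=\max\{z_*:\boldsymbol x\in K\}$ for $K\in\mathcal F$. I will argue that $\zeta$ is continuous and strictly increasing in the parameter, so at most one member of $\mathcal F$ touches the given line from below. Continuity follows from the continuity of the defining functions. For strict monotonicity, let $K'\supsetneq K$ be two members and $\boldsymbol p$ a maximizer of $z_*$ on $K$. The point $\boldsymbol p$ is not asymptotic, because the asymptotic foci are boundary points with $z_*$ equal to $|\lambda|^2$ in pCK, and we may first dispose of the normal case separately. So $\boldsymbol p$ lies in the interior of $K'$, and $K'$ contains points with larger $z_*$.

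In the normal cases both the range and the smallest member of $\mathcal F$ have $\zeta=\max|\lambda_i|^2=\|A\|^2$. This is consistent with the claim and identifies the degenerate member.

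Combining Steps 1--3, $\DW^{\mathbb R}_*(A)$ is the unique member of $\mathcal F$ touching the line, which is the claimed characterization.

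\textbf{Main obstacle.} The delicate part is the strict monotonicity of $\zeta$ in Step 3 for the families with asymptotic foci: the elliptic parabolas, the distance bands and the horodisks. There one has to check that the maximum of $z_*$ is attained at an ordinary point, so that strict inclusion of the sets gives a strictly larger maximum. I would try this first in the pCK coordinates, where the foci sit at $(\Rea\lambda,|\lambda|^2)$. Then I would verify directly, from the horocycle equation \eqref{eq:choro1} transported by real Möbius maps (Theorem \ref{thm:CRtrans}), that $z_*$ is not maximized at the asymptotic point.
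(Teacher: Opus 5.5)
Your route is the paper's: fix the generalized foci, note that the admissible conics form a chain under inclusion, use $\max z_{\mathrm{pCK}}=\|A\|^2$ on the conformal range, and conclude from strict growth of $\max z_{\mathrm{pCK}}$ along the chain. The paper simply asserts that growth. Your Steps 1--2 are fine.

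The gap is Step 3, the one step that needs proof. It is established only when both foci are ordinary. Your reason why $\boldsymbol p$ is not asymptotic is not an argument. Moreover, the claim is false for the degenerate member of a chain. For $A=\mathrm{diag}(1,-2)$ the range is the $h$-line from $(1,1)_{\mathrm{pCK}}$ to $(-2,4)_{\mathrm{pCK}}$. Its maximum $z_{\mathrm{pCK}}=4=\|A\|^2$ is attained only at the asymptotic point $(-2,4)_{\mathrm{pCK}}$, which is not interior to any distance band around that line.

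Your normal-case paragraph does not repair this. Checking that the degenerate member touches $z_{\mathrm{pCK}}=\|A\|^2$ only gives existence, which Step 1 already provides. The characterization needs the converse: no distance band, horodisk, or fatter $h$-elliptic parabolical disk with the same foci may also have maximum $\|A\|^2$. So uniqueness is unproved for normal $A$ whose spectral radius is attained at a real eigenvalue (cases (iii)--(v)). The comparison of two non-degenerate members with an asymptotic focus also rests on the lemma you deferred as the ``main obstacle''.

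Both holes close with one lemma, applied to the \emph{larger} set. A non-degenerate member $K'$ is an elliptical disk in pCK, since it avoids $(0,1)_{\mathrm{BCK}}$. Such a $K'$ never attains $\max z_{\mathrm{pCK}}$ at an asymptotic point $q=(\lambda,\lambda^2)$.
\begin{itemize}
\item Suppose it did. Then the horizontal line through $q$ supports $K'$ at $q$.
\item Since $K'\subset\{z\geq x^2\}\subset\{z\geq 2\lambda x-\lambda^2\}$, the parabola's tangent at $q$ also supports $K'$ there.
\item $\partial K'$ is smooth, so the support line at $q$ is unique. The two lines coincide, forcing $\lambda=0$.
\item Then $z(q)=0$ is the minimum of $z_{\mathrm{pCK}}$ on the whole model, so $K'=\{q\}$, a contradiction.
\end{itemize}
Now take $K\subsetneq K'$ in the chain, so $K'$ is non-degenerate, and let $\boldsymbol p$ maximize $z_{\mathrm{pCK}}$ on $K$.
\begin{itemize}
\item If $\boldsymbol p$ is ordinary, your sublevel-set argument puts it in the interior of $K'$, so $\max_{K'}z_{\mathrm{pCK}}>z(\boldsymbol p)$.
\item If $\boldsymbol p$ is asymptotic, then $\max_{K'}z_{\mathrm{pCK}}\geq z(\boldsymbol p)$. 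Equality would make $\boldsymbol p$ a maximizer on $K'$, which the lemma excludes.
\end{itemize}
For non-normal $A$ you could instead discard the degenerate member directly, since then $\|A\|>\max(|\lambda_1|,|\lambda_2|)$. The normal case genuinely needs the lemma.
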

All these statements will be proven in Section \ref{sec:CRP}.

\snewpage
\section{On $2\times2$ complex matrices}
\plabel{sec:spec22}

Here all matrices will be $2\times2$ complex matrices.
\subsection{The spectral and metric discriminants}
\plabel{ssub:spectype}
~\\

We use the notation
\begin{equation}
D_A=\det\left( A-\frac{\tr A}2\Id_2\right)=(\det A)-\frac{(\tr A)^2}4.
\plabel{nt:DA}
\end{equation}
It is essentially the discriminant of $A$, as the eigenvalues of $A$ are $\frac12\tr A\pm\sqrt{-D_A}$.

In terms of trace,
\begin{equation*}-D_A
=\frac12\tr\left(\left(A-\frac{\tr A}{2}\Id\right)^2\right)
=\frac{\tr A^2}2-\frac{(\tr A)^2}4
.\end{equation*}
Thus, a similar notion is given by the metric discriminant
\begin{equation}U_A
=\frac12\tr\left(\left(A-\frac{\tr A}{2}\Id\right)^*\left(A-\frac{\tr A}{2}\Id\right)\right)
=\frac{\tr A^*A}2-\frac{|\tr A|^2}4
.\plabel{nt:UA}\end{equation}
(We will see, however, that $U_A$ is not truly a counterpart of $-D_A$, but of $|D_A|$.)

It is elementary, but useful to keep in mind that any $2\times2$ complex matrix $A$ can be brought
by unitary conjugation into from
\begin{equation}\begin{bmatrix}
\lambda_1&t\\0&\lambda_2
\end{bmatrix}
\plabel{eq:canon2}\end{equation}
where $t\geq0$.
This form is unique, up to the order of the eigenvalues $\lambda_1$ and $\lambda_2$.

In the situation above,
\begin{equation}U_A=\frac{t^2}2+\left|\frac{\lambda_1-\lambda_2}2\right|^2,
\qquad\text{and}\qquad D_A=-\left(\frac{\lambda_1-\lambda_2}2\right)^2.
\plabel{eq:cansect}
\end{equation}

Conversely,
\begin{equation}
\begin{matrix}
\text{$\lambda_1,\lambda_2$\qquad are the two roots of}
\qquad
\lambda^2-(\tr A)\lambda+\det A=0;
\end{matrix}
\plabel{eq:canroots}\end{equation}
and the ``canonical off-diagonal quantity'',  as \eqref{eq:cansect} shows, is
\begin{equation}t=\sqrt{2(U_A-|D_A|)}.\plabel{eq:canoffdiag}\end{equation}
(One familiar with the elliptical range theorem
can recognize that $U_A$ and $|D_A|$ are quantities to play role there.
In particular, \eqref{eq:canoffdiag} is the length of the minor axis.)

\begin{lemma}\plabel{lem:UADA}
Suppose that $A$ is a $2\times2$ complex matrix. Then
\[U_A\geq |D_A|.\]
If $A$ is normal, then $U_A= |D_A|$.
If $A$ is not normal, then $U_A> |D_A|$.
\begin{proof}
Using a conjugation by a unitary matrix (which leaves $U_A$, $|D_A|$ invariant), we can assume
that $A$ is of triangular form \eqref{eq:canon2}.
Then \eqref{eq:cansect} shows the statement.
\end{proof}
\begin{proof}[Alternative proof]
From \eqref{nt:UA}
we already see that $U_A\geq 0$.
By direct computation,
\begin{commentx}
(as a consequence of
$\tr [A,\mathbf v]^2=8\left(\left(\frac12\tr
  \left(\left(A-\frac{\tr A}2\Id_2\right)\left(\mathbf v-\frac{\tr \mathbf v}2\Id_2\right)\right)\right)^2-D_AD_{\mathbf v}\right)$ )
\end{commentx}
\[\tr (A^*A-AA^*)^*(A^*A-AA^*)=8\left((U_A)^2-|D_A|^2\right).\]
The LHS is the squared Frobenius norm of the commutator $[A^*,A]$.
This is nonnegative, and vanishes if and only if $A$ is normal.
\end{proof}
\end{lemma}

If $f:\lambda\mapsto \frac{a\lambda+b}{c\lambda+d}$, $a,b,c,d\in\mathbb C$, $ad-bc\neq0$ is a complex fractional linear
(i.~e.~complex M\"obius) transformation, and $A$ is a complex $2\times2$ matrix, and $-\frac dc$ is not in the spectrum of $A$,
then we say that $f$ is applicable to $A$. (Indeed, we can take $f(A)$.)
\begin{lemma}\plabel{lem:UDconf}
For complex $2\times 2$ matrices $A$, the ratio
\[U_A : |D_A|\]
is invariant under complex M\"obius transformations (whenever they are applicable to $A$).
\begin{proof}
(o) Multiplying $A$ by a nonzero scalar $a$ yields
$U_{a A}= \bar aa U_A=|a|^2 U_A $ and $|D_{a A}|=|a^2 D_A|=|a|^2|D_A|$;
therefore  $U_{\alpha A}:|D_{\alpha A}|= U_A : |D_A|$, thus the ratio remains invariant in this case.
(i) adding a scalar matrix to $A$ does not alter either ${U_A}$ or ${|D_A|}$; thus the ratio remains the same.
(ii) Regarding taking the inverse of $A$ (if applicable),
\[{U_{A^{-1}}}:{|D_{A^{-1}}|}\equiv{\dfrac{U_A}{|\det A|^2}}:{\left|\dfrac{D_A}{(\det A)^2}\right|}={U_A}:{|D_A|}\]
holds.
The invariance properties (o), (i), (ii) together   imply the complex M\"obius invariance property.
\begin{commentx}
Remark: (o) is already implied by (i) and (ii).
Let $\gamma\neq0$ be a scalar. Let $\delta$ be an arbitrarily chosen scalar but so that
 $A$ should have no eigenvalue $\delta$ or $\delta+1/\gamma$.
 Then the equality
\begin{equation}
-\gamma^2A=\left(\left(\left(A-\delta\Id_2-\frac1\gamma\Id_2\right)^{-1}+\gamma\Id_2\right)^{-1}- \frac1\gamma\Id_2\right)^{-1}
-\delta\gamma^2\Id_2
\plabel{eq:conftrick}
\end{equation}
implies invariance for the scalar multiplier $a=-\gamma^2$.
\end{commentx}
\end{proof}
\end{lemma}

One can also think that this quantity, or any of its variants,
$\frac{U_A}{|D_A|}$,  $\frac{|D_A|}{U_A}$, or $\frac{U_A-|D_A|}{U_A+|D_A|}$,
measure non-normality in a complex M\"obius invariant way.
(This foreshadows the hyperbolic geometric interpretation.)

\subsection{The canonical representatives (complex M\"obius)}
\plabel{ssub:preDW}
~\\

\begin{lemma}\plabel{lem:preDW}
For  complex $2\times2$ matrices
up to complex M\"obius  transformations and unitary conjugations
(which, of course, commute with each other)
it is sufficient to consider the following representatives:
\begin{equation}
\mathbf 0_2=\begin{bmatrix}0&\\&0\end{bmatrix}
\plabel{eqx:022H}
\end{equation}
(parabolic, normal) ;
\begin{equation}
S_0=\begin{bmatrix} 0&1 \\&0\end{bmatrix}
\plabel{eqx:superbolicH}
\end{equation}
(parabolic non-normal);
\begin{equation}
L_{t}=\begin{bmatrix} 1& 2t\\
&-1 \end{bmatrix} \qquad  t\geq 0
\plabel{eqx:loxodromicH}
\end{equation}
(non-parabolic case; $t=0$: normal, $t>0$: non-normal).

The representatives above are inequivalent.

\begin{proof}
It is sufficient to normalize the eigenvalues by applying M\"obius transformations,
and then achieve normal form by unitary conjugation.
By this, we arrive to the parabolic cases
\begin{equation}
\widehat 0_t=\begin{bmatrix}0&2t\\&0\end{bmatrix} \qquad t\geq0,
\plabel{eqx:nullpreH}
\end{equation}
and the non-parabolic cases \eqref{eqx:loxodromicH}.
The cases $\widehat 0_t$ with $t>0$ scale into each other linearly (hence, by M\"obius transformations).
Thus \eqref{eqx:nullpreH} reduces only to  \eqref{eqx:022H} and \eqref{eqx:superbolicH}.
The ratio $U_A:|D_A|$  distinguishes the various representatives, cf. the following example.
\end{proof}
\end{lemma}
\begin{example}\plabel{ex:preDW}
(a) Regarding $L_t$:
\[U_{L_t}  =1+2t^2, \qquad\text{and}\qquad |D_{L_t}| = 1.\]

(b) Regarding $S_0$:
\[U_{S_0}  =\frac12, \qquad\text{and}\qquad |D_{S_0}| = 0.\]

(c) Regarding $\mathbf 0_2$:
\[U_{\mathbf 0_2}  =0, \qquad\text{and}\qquad |D_{\mathbf 0_2}| = 0.\eqedexer\]
\end{example}
\begin{cor} \plabel{cor:cconftriple}
The ratio
\[
U_A\,\,:\,\,|D_A|
\]
is a full invariant of $2\times2$ complex matrices with respect
to equivalence by complex M\"obius transformations and unitary conjugation.
\begin{proof}
It distinguishes the canonical representatives of Lemma \ref{lem:preDW}.
\end{proof}
\end{cor}
Having
$U_A$ and $|D_A|$, and the crude parabolic / non-parabolic spectral classification, one can
 deal with the elliptical range theorem for the Davis--Wielandt shell.
For sake of the conformal range, a finer description is needed:

\subsection{Spectral type refined}
\plabel{ssub:spectype2}
~\\

We extend the elliptic/parabolic/hyperbolic classification
of real $2\times2$ matrices to complex $2\times2$ matrices as follows.
We distinguish the following classes:

$\bullet$ real-elliptic case: two conjugate, strictly complex eigenvalues, 

$\bullet$ real-parabolic case: two equal real eigenvalues, 

$\bullet$ real-hyperbolic case: two distinct real eigenvalues, 

$\bullet$ non-real parabolic case: two equal strictly complex  eigenvalues, 

$\bullet$ semi-real  case: a real and a strictly complex eigenvalue, 

$\bullet$ quasielliptic case:  two non-conjugate
eigenvalues $\lambda_1,\lambda_2$ with $(\Ima \lambda_1)(\Ima\lambda_2)<0$,  

$\bullet$ quasihyperbolic case:  two distinct
eigenvalues $\lambda_1,\lambda_2$ with $(\Ima \lambda_1)(\Ima\lambda_2)>0$.  

One can see that the classes above are closed for conjugation by unitary matrices
and for real fractional linear (i.~e.~M\"obius) transformations, whenever they are applicable.

\subsection{Another spectral quantity}\plabel{ssub:squant}
~\\

A quantity corresponding to $|D_A|$ is
\[E_A=\left(\frac{(\Ima\tr A)}2\right)^2+\frac{|D_A|-\Rea D_A}2.\]
Indeed, if $\lambda_1$ and $\lambda_2$ are the eigenvalues of $A$,
then it is easy to see that
\begin{equation}
|D_A|=\left|\frac{\lambda_1-\lambda_2}2\right|^2
\plabel{eq:daeig}
\end{equation}
and
\begin{equation}
E_A=\left|\frac{\lambda_1-\bar \lambda_2}2\right|^2;
\plabel{eq:eaeig}
\end{equation}
i.~e.~one corresponds to the other after conjugating an eigenvalue.

\begin{lemma}
\plabel{lem:EDinvar}
The ratio
\[{E_A}:{|D_A|}=\left|{\lambda_1-\bar\lambda_2}|^2:|{\lambda_1-\lambda_2}\right|^2\]
is invariant under real fractional transformations (whenever they are applicable to $A$).
\begin{proof}
It is sufficient to prove that they are invariant to
the transformations
(o) $f:\,\lambda \mapsto a\lambda$ $(a\in\mathbb R\setminus\{0\})$,
(i) $f:\,\lambda \mapsto \lambda+b$ $(b\in\mathbb R)$ and,
(ii) $f:\,\lambda \mapsto 1/\lambda$ (whenever applicable);
and these are easy to see.
\begin{commentx}

(Remark: Applying \eqref{eq:conftrick} once or twice, the case (o) can be omitted.)
\end{commentx}
\end{proof}
\end{lemma}

\subsection{A remark on the real double}
\begin{lemma}\plabel{lem:double}
The identity
\begin{equation}
\left(U_{\mathrm D^{\mathbb R}(A)}\right)^2-\left|D_{\mathrm D^{\mathbb R}(A)}\right|^2=2
\left(U_A-|D_A|\right)\left(U_A+|D_A|\right)\left(U_A-|D_A|+2|E_A|\right)
\plabel{eq:double}
\end{equation}
holds.
\begin{proof}
Due to unitary invariance, this is sufficient to check for matrices of shape \eqref{eq:canon2},
where the computation is relatively manageable.
\end{proof}
\end{lemma}
\begin{cor}
\plabel{cor:double}
$A$ is normal if and only if $\mathrm D^{\mathbb R}(A)$ is normal.
\begin{proof}
As we have $U_A\geq |D_A|\geq 0$ and $U_A\geq |D_A|\geq 0$, in general,  we see:
$\mathrm D^{\mathbb R}(A)$ is normal $\Leftrightarrow$
$U_{\mathrm D^{\mathbb R}(A)} -\left|D_{\mathrm D^{\mathbb R}(A)}\right|=0$
$\Leftrightarrow$
\eqref{eq:double} is $0$
$\Leftrightarrow$
$U_A-|D_A|=0$
$\Leftrightarrow$
$A$ is normal.
\end{proof}
\end{cor}

\snewpage

\subsection{The canonical representatives (real M\"obius)}
\plabel{ssub:preCR}

\begin{lemma}\plabel{lem:preCR}
For  complex $2\times2$ matrices
up to real M\"obius transformations and unitary conjugation
it is sufficient to consider the following cases:
\begin{equation}
\mathbf 0_2=\begin{bmatrix}0&\\&0\end{bmatrix}
\plabel{eqx:022}
\end{equation}
(real-parabolic, normal) ;
\begin{equation}
S_\beta=\begin{bmatrix} 0&\cos\beta \\&\mathrm i\sin\beta\end{bmatrix} \qquad \beta\in\left[0,\frac\pi2\right]
\plabel{eqx:superbolic}
\end{equation}
($\beta=0$: real-parabolic non-normal, $0<\beta<\pi/2$: semi-real non-normal, $\beta=\pi/2$: semi-real normal);
\begin{equation}
L_{\alpha,t}^{\pm}=\begin{bmatrix} \cos\alpha+\mathrm i\sin\alpha& 2t\\
&-\cos\alpha\pm\mathrm i\sin\alpha \end{bmatrix} \qquad \alpha\in\left[0,\frac\pi2\right], t\geq 0
\plabel{eqx:loxodromic}
\end{equation}
\[\text{but the cases  $L_{0,t}^+$ and $L_{0,t}^-$ are identical}\]
($\alpha=0$:   real-hyperbolic case,  $0<\alpha<\pi/2$: quasielliptic [$-$] / quasihiperbolic case [$+$],
$\alpha=\pi/2$: real-elliptic [$-$] / non-real parabolic case [$+$];
$t=0$: normal, $t>0$ non-normal).

Apart from the degeneracy  for $\alpha=0$, the representatives above are inequivalent.

\begin{proof}
It is sufficient to follow the image of the eigenvalues (up to conjugation)
in the Poincar\'e half-plane model (with respect to real M\"obius transformations),
i.~e.~the classification of point-pairs in the asymptotically closed hyperbolic plane
up to collineations,
reconsidering the corresponding the possible complex eigenvalues,
and apply the normal form up to unitary conjugation.
By this, we arrive to the cases
\begin{equation}
\widehat 0_t=\begin{bmatrix}0&2t\\&0\end{bmatrix} \qquad t\geq0,
\plabel{eqx:nullpre}
\end{equation}
\begin{equation}
\widehat S_t^{\pm}=\begin{bmatrix} 0&2t \\&\pm\mathrm i\end{bmatrix}\qquad t\geq0,
\plabel{eqx:superbolicpre}
\end{equation}
\begin{equation}
\widehat L_{\alpha,t}^{\pm_1,\pm_2}=\begin{bmatrix} \cos\alpha\pm_1\mathrm i\sin\alpha& 2t\\
&-\cos\alpha\pm_2\mathrm i\sin\alpha \end{bmatrix} \qquad \alpha\in\left[0,\frac\pi2\right], t\geq 0.
\plabel{eqx:loxodromicpre}
\end{equation}
(The most notable case is when have two non-asymptotic $h$-eigenpoints.
Then by an appropriate collineation we can move them to the line $(x_{\mathrm{Ph}})^2+(z_{\mathrm{Ph}})^2=1$.
By a further translation along this line, we can assume the $h$-eigenpoints are in symmetric position with respect to $(0,1)$.
Then considering the corresponding possible eigenvalues and the normal form by unitary conjugation,
 we arrive to the normal forms \eqref{eqx:loxodromicpre}, with $\alpha\in(0,\pi/2]$.
All other cases are simpler.)

Some cases of signs can be eliminated:
Multiplying by $-1$ and applying unitary conjugation,
we see that $\widehat S_t^{-}$ and $\widehat S_t^{+}$ are equivalent;
we eliminate $\widehat S_t^{-}$.
Similarly, taking inverse and applying unitary conjugation,
we see that $\widehat L_{\alpha,t}^{\pm_1,\pm_2}$ and $\widehat L_{\alpha,t}^{\mp_1,\mp_2}$ are equivalent;
we eliminate $\widehat L_{\alpha,t}^{-,\pm_2}$.
Now, $\widehat 0_0$ is just $\mathbf 0_2$. $\widehat 0_t$ with $t>0$ scales into $S_{0}$.
$\widehat S_t^{+}$ scales into $S_\beta$ with $\beta\in(0,\pi/2]$.
$\widehat L_{\alpha,t}^{+,\pm_2}$ serves $L_{\alpha,t}^{\pm_2}$.

From the location of the eigenvalues, it is not hard to see that
representatives from different families are inequivalent.
Inside the families, the ratio $U_A:|D_A|:E_A$ distinguishes the various representatives as the following example shows.
\end{proof}
\end{lemma}

\begin{example}\plabel{ex:confrep}
(a) Regarding $L_{\alpha,t}^\pm$:
\[U_{L^{+}_{\alpha,t}}=2t^2+(\cos\alpha)^2,
\quad
\left|D_{L^{+}_{\alpha,t}}\right|=(\cos\alpha)^2,
\quad
E_{L^{+}_{\alpha,t}}=1,
\quad
E_{L^{+}_{\alpha,t}}\geq \left|D_{L^{+}_{\alpha,t}}\right|;\]
and
\[U_{L^{-}_{\alpha,t}}=2t^2+1,
\quad
\left|D_{L^{-}_{\alpha,t}}\right|=1,
\quad
E_{L^{-}_{\alpha,t}}=(\cos\alpha)^2,
\quad
E_{L^{-}_{\alpha,t}}\leq\left|D_{L^{-}_{\alpha,t}}\right|.\]

(b) Regarding $S_{\beta}$:
\[U_{S_\beta}=\frac14\left(1+(\cos\beta)^2\right),
\quad
\left|D_{S_\beta}\right|=E_{S_\beta}=\frac14\left(1-(\cos\beta)^2\right).
\]

(c) In case of $\mathbf 0_2$:
\[U_{\mathbf 0_2}=\left|D_{\mathbf 0_2}\right|=E_{\mathbf 0_2}=0.\eqedexer\]
\end{example}
\begin{cor} \plabel{cor:rconftriple}
The triple ratio 
\begin{equation}
U_A\,\,:\,\,|D_A|\,\,:\,\,E_A
\plabel{eq:preconfrep}
\end{equation}
and (when $|D_A|=E_A>0$) the possible choice of
\[\text{real-hyperbolic / semi-real type}\]
together form a full invariant of $2\times2$ complex matrices with respect
to equivalence by real M\"obius transformations and unitary conjugation.
\begin{proof}
\eqref{eq:preconfrep} almost distinguishes the canonical representatives of Lemma \ref{lem:preCR}.
The only ambiguity is between $L_t\equiv L_{0,t}$ and $S_\beta$ when $\cos\beta=\frac{t}{\sqrt{1+t^2}}$.
\end{proof}
\end{cor}
\snewpage
\subsection{The $h$-eigendistance}
\plabel{ssub:eigen}
~\\

\begin{lemma}
\plabel{lem:eigdistinv}
If $f$ is a real M\"obius transformation
applicable to the complex $2\times2$ matrix $A$, then the $h$-eigenpoints of $A$
are related to the $h$-eigenpoints of $f(A)$ by the $h$-isometries associated to $f$.
\begin{proof}
It follows from the effect of $f_*$ on $\iota_*(\lambda)$ and the fact that $f_*$ and $\pi^{[2]}_*$ commutes.
(But it is most obvious in the Poincar\'e half-plane model, where the $h$-eigenpoints are just the eigenvalues up
to conjugation.)
\end{proof}
\end{lemma}
\begin{theorem}
\plabel{lem:eigdist}
If the eigenvalues $\lambda_1,\lambda_2$  of $A$ are from $\mathbb C\setminus\mathbb R$, then
\begin{align}
\mathrm d^*\left(\iota^{[2]}_*(\lambda_1),\iota^{[2]}_*(\lambda_2)\right) &=
\mathrm d^{\mathrm{Ph}}\left((\Rea\lambda_1,|\Ima\lambda_1|),(\Rea\lambda_2,|\Ima\lambda_2|) \right)
\plabel{eq:eigdist}\\
\notag&=
\log\left|\frac{\sqrt{\dfrac{E_A}{|D_A|}}+1}{\sqrt{\dfrac{E_A}{|D_A|}}-1}\right|\equiv
\begin{cases}
2\artanh \sqrt{\dfrac{E_A}{|D_A|}}&\quad\text{if }{\dfrac{E_A}{|D_A|}}\leq1,\\\\
2\arcoth \sqrt{\dfrac{E_A}{|D_A|}}&\quad\text{if }{\dfrac{E_A}{|D_A|}}\geq1
\end{cases}
\\\notag&=\arcosh\left|\frac{E_A+|D_A|}{E_A-|D_A|}\right|.
\end{align}
This is also valid if there is a real eigenvalue but $\lambda_1\neq\lambda_2$.
Then the value in \eqref{eq:eigdist} is $+\infty$.
(Having a double real eigenvalue, a natural choice for the value of  \eqref{eq:eigdist} is $0$.)

\begin{proof}
It can be computed using \eqref{eq:dis2Ph} taking \eqref{eq:daeig} and \eqref{eq:eaeig} into account.
This is a computation entirely in terms of $\Rea\lambda_1,|\Ima\lambda_1|,\Rea\lambda_2,|\Ima\lambda_2|$.
\end{proof}
\end{theorem}
Alternatively, we can argue as follows.
\begin{lemma}
\plabel{lem:eigpoints}
 Regarding the canonical representatives of Lemma \ref{lem:preCR}, their  $h$-eigenpoints in the BCK model are:

For $\mathbf 0_2$:   $\iota_{\mathrm{BCK}}(0)=(0,-1)$ (twice).

For $S_\beta$:   $\iota_{\mathrm{BCK}}(0)=(0,-1)$ and
 $\iota_{\mathrm{BCK}}(\mathrm i\sin\beta)=\left(\frac{\sin\beta)^2-1}{(\sin\beta)^2+1},0\right)$.

For $L_{\alpha,t}$:
 $\iota_{\mathrm{BCK}}(-\cos\alpha\pm\mathrm i\sin\alpha)=(-\cos\alpha,0)$
and $\iota_{\mathrm{BCK}}(\cos\alpha+\mathrm i\sin\alpha)=(\cos\alpha,0)$.
\begin{proof}
Due to the triangular form, the eigenvalues can be recognized immediately.
Computation of the $h$-eigenpoints is straightforward then.
\end{proof}
\end{lemma}

\begin{proof}[Alternative proof for Theorem \ref{lem:eigdist}]
As each side is naturally invariant for real Möbius transformations,
the statement can be reduced for the canonical representatives of Lemma \ref{lem:preCR}.
For example, in the case of non-real eigenvalues this means reduction for $L_{\alpha, t}^\pm$ ($\alpha\in(0,\frac\pi2]$).
Here the $h$-eigenpoints $(\pm\cos\alpha,\sin\alpha)$ are obvious in Ph,
or $(\pm\cos\alpha,0)$ in BCK by Lemma \ref{lem:eigpoints};
 and also the $|D_A|$, $E_A$ by Example \ref{ex:confrep}.
In any case, everything in \eqref{eq:eigdist} computes to
 $\log\frac{1+\cos\alpha}{1-\cos\alpha}=2\artanh\,  \cos\alpha  =\arcosh\frac{1+(\cos\alpha)^2}{1-(\cos\alpha)^2}$.
\end{proof}

It is reasonable to call \eqref{eq:eigdist} `the $h$-eigendistance of $A$'.

\snewpage
\section{The Davis--Wielandt shell of $2\times2$ complex matrices }
\plabel{sec:DWP}

\begin{theorem}\plabel{cor:DW}

The conformal ranges of the canonical representatives of Lemma \ref{lem:preDW} in the BCK model are as follows:

The zero matrix $\mathbf 0_2$ yields the point ellipsoid
\[\{(0,0,-1)\};\]

The matrix $S_0$ yields the rotational ellipsoid with  principal axis
\[\{0\}\times\{0\}\times[-1,0],\]
which is of semi-axis length $\frac12$; and with other semi-axis lengths $\frac{\sqrt2}2$.

The matrix   $L_{ t}$ yields the rotational ellipsoid with principal axis
\[[-1,1]\times\{0\}\times\{0\}\]
which is of semi-axis length $1$; and with other semi-axis lengths $ \frac{t}{\sqrt{1+t^2}}$.
(This ellipsoid is degenerate for $t=0$).

\begin{proof}
Assume that $\mathbf x=z_1\mathbf e_1+z_2\mathbf e_2$ with $|z_1|^2+|z_2|^2=1$.
Then
\begin{multline*}
\bem
\Rea\langle L_t\mathbf x,\mathbf x\rangle \\
\Ima\langle L_t\mathbf x,\mathbf x\rangle \\
 \langle L_t\mathbf x,L_t\mathbf x\rangle \\
 \langle \mathbf x,\mathbf x\rangle
\eem=
   \begin {bmatrix} t&0&-1&0\\ 0&t&0&0
\\ 2t&0&2{t}^{2}&2{t}^{2}+1\\ 0&0&0& 1\end {bmatrix}
\bem
2\Rea( z_2\bar z_1)\\
2\Ima(z_2\bar z_1)\\
|z_2|^2-|z_1|^2\\
|z_2|^2+|z_1|^2
\eem
\\
=
\underbrace{\bem
 {\sqrt{1+t^2}}&&& 0\\
&t&&0 \\
&& {2t}{\sqrt{1+t^2}}&2t^2+1\\
&&&1
\eem}_{S_{Z1}}
   \underbrace{\begin {bmatrix} \frac{t}{\sqrt{1+t^2}}&0&-\frac{1}{\sqrt{1+t^2}}&0\\ 0&1&0&0
\\  \frac{1}{\sqrt{1+t^2}}&0&\frac{t}{\sqrt{1+t^2}}& 0\\ 0&0&0& 1\end {bmatrix}}_{S_{Z2}}
\underbrace{\bem
2\Rea( z_2\bar z_1)\\
2\Ima( z_2\bar z_1)\\
|z_2|^2-|z_1|^2\\
|z_2|^2+|z_1|^2
\eem}_{S_{Z3}}.
\end{multline*}
Here  $S_{Z3}$ ranges over the unit sphere in projective coordinates, but also in affine coordinates;
 being it is technically the same as $\iota_{\mathrm {BCK}}(z_2/z_1)$.
 $S_{Z2}$ leaves it invariant; $S_{Z1}$ shrinks it along the axes, and also translates it along the $z$-axis.
From this, one obtains $\DW_{\mathrm{BCK}}(L_t)$ immediately.
~
Assume $t>0$.  Then $\DW_{\mathrm{BCK}}(L_t)$ is given by the ellipsoid
\[\frac{(x_{\mathrm{pCK}})^2}{\left(\sqrt{1+t^2}\right)^2}+
\frac{\left(y_{\mathrm{pCK}}\right)^2}{t^2} +\frac{\left(z_{\mathrm{pCK}}
-(2t^2+1)\right)^2}{\left(2t\sqrt{1+t^2} \right)^2} =1.\]
Rewriting this according to \eqref{eq:can2}, we obtain the result in the BCK model,
\[\frac{\left(x_{\mathrm{BCK }}\right)^2}{\left(1\right)^2}
+\frac{\left(y_{\mathrm{BCK }}\right)^2}{\left(\dfrac{t}{\sqrt{1+t^2}}\right)^2}
+\frac{\left(z_{\mathrm{BCK }}\right)^2}{\left(\dfrac{t}{\sqrt{1+t^2}}\right)^2}=1.\]
This establishes the statement for $L_t$, $t>0$.
In case $t=0$, the previous argument for  $\DW_{\mathrm{pCK}}(L_0)$
 yields the segment connecting $(-1,0,1)=\iota_{\mathrm{pCK}}(-1)$ and $(1,0,1)=\iota_{\mathrm{pCK}}(1)$.
Transcription to the BCK model yields
 the segment connecting $\iota_{\mathrm{BCK }}(-1)=(-1,0,0)$ and $\iota_{\mathrm{BCK }}(1)=(1,0,0)$,
 which is according to the statement.
Note that the asymptotic points are the same  for $L_t$ with $t>0$.

Let us consider the case of $S_0$.
Here
\[
\bem
\Rea\langle S_0\mathbf x,\mathbf x\rangle \\
\Ima\langle S_0\mathbf x,\mathbf x\rangle \\
 \langle S_0\mathbf x,S_0\mathbf x\rangle \\
 \langle \mathbf x,\mathbf x\rangle
\eem=
   \begin {bmatrix} \frac12&0&0&0\\ 0&\frac12&0&0
\\ 0&0&\frac12&\frac12\\ 0&0&0& 1\end {bmatrix}
\bem
2\Rea( z_2\bar z_1)\\
2\Ima( z_2\bar z_1)\\
|z_2|^2-|z_1|^2\\
|z_2|^2+|z_1|^2
\eem.\]
A similar argument as before  shows that the shell is given by
\[\frac{(x_{\mathrm{pCK}})^2}{\left(\frac12\right)^2}+
 \frac{\left(y_{\mathrm{pCK}}\right)^2}{\left(\frac12\right)^2} +\frac{\left(z_{\mathrm{pCK}}
 -\frac12\right)^2}{\left(\frac12 \right)^2} =1.\]
Rewriting this according to \eqref{eq:can2}, we obtain the result in the BCK model,
\[
\frac{\left(x_{\mathrm{BCK }}\right)^2}{\left(\frac{\sqrt2}2\right)^2}
+\frac{\left(y_{\mathrm{BCK }}\right)^2}{\left(\frac{\sqrt2}2\right)^2}
+\frac{\left(z_{\mathrm{BCK }}+\frac12\right) ^2}{\left(\frac12\right)^2}
=1.\]
This establishes case of $S_0$.
Here the asymptotic point is $\iota_{\mathrm{BCK}}(0)=(0,1)$.

The case of $\mathbf0_2$ is trivial.
\end{proof}
\end{theorem}


\begin{proof}[Proof for Theorem \ref{thm:DWdonc}]

It is sufficient to check the canonical representatives in the various cases:
(i) $\mathbf 0_2$;
(ii) $L_t$ with $t=0$;
(iii) $S_0$;
(iv) $L_t$ with $t>0$.
~
In these cases (cf. also the a the previous proof for equations and interpretations for asymptotic points) the statement
 is mostly clear, the shapes are rather recognizable.
In case (iii), we have a $h$-horosphere:
Note that its equation
\[ (x_{\mathrm{BCK}})^2+ (y_{\mathrm{BCK}})^2+ 2(z_{\mathrm{BCK}})^2+2 z_{\mathrm{BCK}}=0\]
 is just \eqref{eq:horo} but rotated around $y$-axis by $180^\circ$, now with asymptotic point $(0,0,-1)$.
As everything transforms naturally under applicable complex fractional transformations and unitary conjugation,
 we have the statement in general.
In case (iv), we have $h$-tubes:
The equations are like in \eqref{eq:dorocc} but rotated around $y$-axis by $90^\circ$, now with asymptotic points $(\pm1,0,0)$.
\end{proof}

\begin{proof}[Proof for Theorem \ref{thm:addDW1}]
It is sufficient to check the radius in case (iii).
Regarding $L_t$, $t>0$, this radius, by symmetry reasons, is
\[\frac12\mathrm d^{\mathrm{BCK}}\left(\left( 0,0,-\frac{t}{\sqrt{1+t^2}}\right),
\left( 0,0,\frac{t}{\sqrt{1+t^2}}\right)
 \right)=\frac12\arcosh (1+2t^2)=\arsinh t.\]
This matches the statement by Example \ref{ex:preDW}(a).
As things transform invariantly under applicable complex fractional transformations and unitary conjugation,
 we have the statement in general.
\end{proof}

\begin{proof}[Proof for Theorem \ref{thm:addDW2}]
The statement is easier to see in the pCK model.
Having the asymptotic points fixed, it easy to that we have smaller of larger ellipsoids,
 one convex hull is contained in other.
It is also clear that the supremum in $z_{\mathrm{pCK}}$ is the square of the norm,
which strictly grows as the ellipsoids become fatter.
(It ranges from the minimal $\max(|\lambda_1|^2,|\lambda_2|^2)$ toward $+\infty$.)
The transcription to the BCK model is obvious.
\end{proof}
\snewpage
\section{The conformal range of $2\times2$ complex matrices }
\plabel{sec:CRP}

\subsection{Canonical representatives and the geometry of the conformal range}
\plabel{ssub:cangeo}

\begin{theorem}\plabel{cor:CR}

The conformal ranges of the canonical representatives of Lemma \ref{lem:preCR} in the BCK model are as follows:

The zero matrix $\mathbf 0_2$ yields the point ellipse
\[\{(0,-1)\};\]
$S_\beta$ yields the ellipse with axes
\[ \left[-\frac{\sqrt2}{2}\cos\beta,\frac{\sqrt2}{2}\cos\beta \right]\times\left\{-\frac12\right\}\qquad\text{and}\qquad  \{0\}\times[-1,0];\]
and  $L^\pm_{\alpha,t}$ yields the ellipse with axes
\begin{equation*} \left[-\frac{\sqrt{(\cos\alpha)^2+t^2 }}{\sqrt{1+t^2}} ,\frac{\sqrt{(\cos\alpha)^2+t^2 }}{\sqrt{1+t^2}} \right]\times\{0\}\qquad\text{and}\qquad  \{0\}\times\left[-\frac{t}{\sqrt{1+t^2}},\frac{t}{\sqrt{1+t^2}}\right];
\end{equation*}
(the ellipses may be degenerate).
\end{theorem}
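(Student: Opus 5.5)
The plan is to compute the ranges directly from the triangular forms, in the same way as in the proof of Theorem \ref{cor:DW}, and then apply the projection $\pi^{[2]}_{\mathrm{BCK}}$ via \eqref{eq:CRproj}. Write $\mathbf x=z_1\mathbf e_1+z_2\mathbf e_2$ with $|z_1|^2+|z_2|^2=1$. For a general triangular matrix $\bem \lambda_1&2t\\0&\lambda_2\eem$, the vector $\left(\Rea\langle A\mathbf x,\mathbf x\rangle,\ \Ima\langle A\mathbf x,\mathbf x\rangle,\ |A\mathbf x|^2,\ |\mathbf x|^2\right)^\top$ is a fixed $4\times4$ real matrix applied to $S_{Z3}=\left(2\Rea(z_2\bar z_1),\,2\Ima(z_2\bar z_1),\,|z_2|^2-|z_1|^2,\,1\right)^\top$. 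Here $S_{Z3}$ ranges exactly over the unit sphere. So $\DW_{\mathrm{pCK}}(A)$ is an affine image of the unit sphere, and its convex hull is the image of the ball.

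Dropping the second coordinate gives $\DW^{\mathbb R}_{\mathrm{pCK}}(A)$ as the image of the unit ball under the affine map $\mathbb R^3\to\mathbb R^2$ given by the first and third rows. If this map is $\mathbf v\mapsto M\mathbf v+\mathbf c$, the image is the possibly degenerate elliptical disk $\{\mathbf c+\mathbf w:\ \mathbf w^\top(MM^\top)^{+}\mathbf w\le 1\}$. For $\mathbf 0_2$ this is immediate. For $\beta=0$ it is also immediate, since $S_0$ is the projection of the ellipsoid in Theorem \ref{cor:DW}.

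Rather than transcribing the pCK ellipse into the BCK model symbolically, I will use a projective transformation \eqref{eq:can20}. It maps the pCK ellipse (a conic) to a conic in the BCK model, which is still an ellipse because it stays inside the bounded disk. To identify it, I will use symmetry.

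For $L^\pm_{\alpha,t}$, the real M\"obius map $\lambda\mapsto-\lambda$, composed with a suitable unitary conjugation and possibly entrywise conjugation, should carry the matrix to an equivalent one. By Theorems \ref{thm:CRronc}--\ref{thm:CRtrans}, this shows that the BCK range is symmetric under $x\mapsto -x$, and its effect is the reflection $(x,z)\mapsto(-x,z)$. I also need a symmetry $z\mapsto -z$, which corresponds to $\lambda\mapsto 1/\bar\lambda$ (inversion, up to unitary conjugation and rescaling). I will check that $L^{\pm}_{\alpha,t}$ is invariant under this up to unitary equivalence; the eigenvalues $\pm\cos\alpha+\mathrm i\sin\alpha$ lie on the unit circle, which is consistent with this. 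With both symmetries, the ellipse is centered at the origin and its axes lie on the coordinate axes. The semi-axes then follow from the extremal values of $x_{\mathrm{BCK}}=2\Rea\langle\mathbf y,\mathbf x\rangle/(|\mathbf y|^2+|\mathbf x|^2)$ and $z_{\mathrm{BCK}}$. Equivalently, I intersect the pCK ellipse with the preimages of the BCK coordinate axes, which are the line $x_{\mathrm{pCK}}=0$ and the curve $z_{\mathrm{pCK}}=1$. This reduces to one-variable quadratic computations. For $t=0$, the expected answer is the $h$-segment between the eigenpoints $(\pm\cos\alpha,0)$ from Lemma \ref{lem:eigpoints}, which serves as a consistency check.

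For $S_\beta$, only the reflection $x\mapsto-x$ is available; it comes from $\lambda\mapsto -\bar\lambda$ plus unitary conjugation. So one axis is the $z$-axis. I will find the vertical extent $[-1,0]$ by direct maximization of $z_{\mathrm{BCK}}=(|\mathbf y|^2-1)/(|\mathbf y|^2+1)$, that is, of $\|S_\beta\|=1$, which gives $z=0$, together with the eigenpoint $\iota(0)=(0,-1)$. For the horizontal axis at height $-\tfrac12$, I will show that the maximal width of the ellipse occurs at $z=-\tfrac12$. I will do this by writing the pCK ellipse explicitly and substituting \eqref{eq:can20}, which gives a quadratic in $(x_{\mathrm{BCK}},z_{\mathrm{BCK}})$ with no $xz$ term, and then completing the square.

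I expect the main obstacle to be this last transcription, from the pCK ellipse to the BCK equation for $S_\beta$ (and for $L$ if the symmetry argument in $z$ fails). The plan there is to multiply through by $(1-z_{\mathrm{BCK}})^2$ and complete squares, checking against the known case $\beta=0$.
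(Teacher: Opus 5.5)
Your proposal is correct. Its first half is the paper's, but for $L^\pm_{\alpha,t}$ you finish differently. The shared part is writing $(\Rea\langle A\mathbf x,\mathbf x\rangle,\Ima\langle A\mathbf x,\mathbf x\rangle,|A\mathbf x|^2,|\mathbf x|^2)$ as a fixed real matrix applied to the Hopf-sphere vector, then dropping the second coordinate.

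The paper factors $S_T=S_{T1}S_{T2}S_{T3}$, with $S_{T2},S_{T3}$ orthogonal and $S_{T1}$ diagonal plus a translation. From this it reads off the axis-aligned pCK ellipse with centre $(0,2t^2+1)$ and semi-axes $\sqrt{(\cos\alpha)^2+t^2}$ and $2t\sqrt{1+t^2}$. It then transcribes this through \eqref{eq:can2} into the explicit inequality \eqref{eq:CRL}, and handles $S_\beta$ the same way to get \eqref{eq:CRS}.

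Your Gram matrix $MM^\top=\mathrm{diag}\bigl((\cos\alpha)^2+t^2,\,4t^2(1+t^2)\bigr)$ gives the same pCK ellipse without having to find the orthogonal factors. Your symmetry argument then replaces the transcription, and it does work. The matrices $-L^\pm_{\alpha,t}$ and $(L^\pm_{\alpha,t})^{-1}$ are each unitarily equivalent to $L^\pm_{\alpha,t}$ or to $\overline{L^\pm_{\alpha,t}}$; for instance $(L^+_{\alpha,t})^{-1}=\overline{L^+_{\alpha,t}}$ exactly. What keeps $t$ unchanged under inversion is $|\det L^\pm_{\alpha,t}|=1$. Hence the BCK range is invariant under both coordinate reflections, and intersecting the pCK ellipse with $z_{\mathrm{pCK}}=1$ and $x_{\mathrm{pCK}}=0$ gives the semi-axes.

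Compared with the paper, your route explains why the ellipse is centred at the origin. It costs only covariance under $\lambda\mapsto-\lambda$ and $\lambda\mapsto1/\lambda$, which follows directly from the definition via $\mathbf y\mapsto-\mathbf y$ and $\mathbf x\leftrightarrow\mathbf y$. The paper's computation instead delivers \eqref{eq:CRL} and \eqref{eq:CRS} verbatim, and these are reused in Theorems \ref{cor:CRER1} and \ref{cor:CRER2}. That said, a centred, axis-aligned ellipse is determined by its semi-axes anyway.

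For $S_\beta$ you do not need to complete squares either. Once the reflection symmetry and $\|S_\beta\|=1$ give the axis $\{0\}\times[-1,0]$, the centre is $(0,-\frac12)$. Its horizontal line has pCK preimage $z_{\mathrm{pCK}}=\frac13$. On that line $x_{\mathrm{BCK}}=\frac32x_{\mathrm{pCK}}$ and $x_{\mathrm{pCK}}^2\le\frac29(\cos\beta)^2$, which gives the half-width $\frac{\sqrt2}{2}\cos\beta$.

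Three small points should be made explicit:
\begin{itemize}
\item Invariance of $\DW^{\mathbb R}$ under entrywise conjugation $A\mapsto\bar A$ is not among Theorems \ref{thm:CRronc}--\ref{thm:CRtrans}. It follows at once from $\mathbf x\mapsto\bar{\mathbf x}$, since $\langle\bar A\bar{\mathbf x},\bar{\mathbf x}\rangle=\overline{\langle A\mathbf x,\mathbf x\rangle}$ and $|\bar A\bar{\mathbf x}|=|A\mathbf x|$.
\item The projected image of the sphere equals the image of the ball because a linear map $\mathbb R^3\to\mathbb R^2$ has a nontrivial kernel.
\item In the degenerate cases, the pseudo-inverse description must be restricted to $\mathbf w\in\operatorname{range}M$.
\end{itemize}
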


\begin{proof}
Assume that $\mathbf x=z_1\mathbf e_1+z_2\mathbf e_2$ with $|z_1|^2+|z_2|^2=1$.
By explicit computation, one finds that
\[
\bem
\Rea\langle L_{\alpha,t}^\pm\mathbf x,\mathbf x\rangle \\ 0 \\ \langle L_{\alpha,t}^\pm\mathbf x,L_{\alpha,t}^\pm\mathbf x\rangle \\ \langle \mathbf x,\mathbf x\rangle
\eem
=
\underbrace{
\bem
t&0&-\cos\alpha&0\\
0&0&0&0\\
2t\cos\alpha&2t\sin\alpha&2t^2&2t^2+1\\
0&0&0&1
\eem
}_{S_T:=}
\bem
2\Rea( z_2\bar z_1)\\
2\Ima( z_2\bar z_1)\\
|z_2|^2-|z_1|^2\\
|z_2|^2+|z_1|^2
\eem.
\]
Now, $S_T$ itself can be written as the product
\begin{multline*}
S_{T1}\cdot S_{T2}\cdot S_{T3}
\equiv
\bem
\sqrt{(\cos\alpha)^2+t^2}&&&\\
&0&&\\
&&2t\sqrt{1+t^2}&2t^2+1\\
&&&1
\eem
\\
\cdot
\bem
1&&&\\
&\dfrac{\sqrt{(\cos\alpha)^2+t^2}}{\sqrt{1+t^2}}&-\dfrac{\sin\alpha}{\sqrt{1+t^2}}&\\\\
&\dfrac{\sin\alpha}{\sqrt{1+t^2}}&\dfrac{\sqrt{(\cos\alpha)^2+t^2}}{\sqrt{1+t^2}}&\\
&&&1
\eem
\cdot
\\
\bem
\dfrac{t}{\sqrt{(\cos\alpha)^2+t^2}}&&-\dfrac{\cos\alpha}{\sqrt{(\cos\alpha)^2+t^2}}&\\
&1&&\\
\dfrac{\cos\alpha}{\sqrt{(\cos\alpha)^2+t^2}}&&\dfrac{t}{\sqrt{(\cos\alpha)^2+t^2}}&\\
&&&1
\eem
\end{multline*}
(if $\cos\alpha=t=0$, then the identity matrix can be taken as $S_{T3}$).
Then it easy to see how $S_T$ acts on the affinized unit sphere:
$S_{T3}$ and $S_{T2}$ are just orthogonal transformations leaving it invariant, while
$S_{T1}$ collapses it to a (possibly degenerate) elliptical disk (and translates it).
This gives $\DW_{\mathrm{pCK}}^{\mathbb R}(L_{\alpha,t}^\pm)$.
~
If $t>0$, then its equation is
\[\frac{(x_{\mathrm{pCK}})^{2}}{(\sqrt{(\cos\alpha)^2+t^2})^2}+\frac{(z_{\mathrm{pCK}}- (2t^2+1))^{2}}{(2t\sqrt{1+t^2})^2}\leq1.\]
Using \eqref{eq:can2},  the transcription
\begin{equation}
\frac{(x_{\mathrm{BCK }})^{2}}{\left(\dfrac{\sqrt{(\cos\alpha)^2+t^2}}{\sqrt{1+t^2}}\right)^2}+
\frac{(z_{\mathrm{BCK }})^{2}}{\left(\dfrac{t}{\sqrt{1+t^2}}\right)^2} \leq1,
\plabel{eq:CRL}
\end{equation}
 to the BCK model is straightforward, yielding the statement.
~
In the $t=0$ case, the set $\DW_{\mathrm{pCK}}^{\mathbb R}(L_{\alpha,0}^\pm)$
 is the segment connecting $(-\cos\alpha ,1)=\iota^{[2]}_{\mathrm{pCK}}(-\cos\alpha\pm \mathrm i\sin\alpha)$
 and $( \cos\alpha, 1)=\iota^{[2]}_{\mathrm{pCK}}(\cos\alpha+\mathrm i\sin\alpha)$.
Transcribed to the BCK model it is  the segment connecting $\iota^{[2]}_{\mathrm{BCK}}(-\cos\alpha\pm \mathrm i\sin\alpha)=(-\cos\alpha, 0)$
 and $\iota^{[2]}_{\mathrm{BCK }}(\cos\alpha+\mathrm i\sin\alpha)=(\cos\alpha, 0)$; also yielding the statement.

Similarly, one finds
\[
\bem
\Rea\langle S_{\beta}\mathbf x,\mathbf x\rangle \\ 0 \\ \langle S_{\beta}\mathbf x,S_{\beta}\mathbf x\rangle \\ \langle \mathbf x,\mathbf x\rangle
\eem
=
\bem
\frac12\cos\beta&0&0&0\\
0&0&0&0\\
0&0&\frac12&\frac12\\
0&0&0&1
\eem
\bem
2\Rea( z_2\bar z_1)\\
2\Ima( z_2\bar z_1)\\
|z_2|^2-|z_1|^2\\
|z_2|^2+|z_1|^2
\eem.
\]
If $\cos\beta>0$, then this yields the equation
\[
\frac{(x_{\mathrm{pCK}})^{2}}{\left(\frac12\cos\beta\right)^2}+
\frac{\left(z_{\mathrm{pCK}} -\frac12\right)^{2}}{\left(\frac12\right)^2} \leq1.
\]
The transcription
\begin{equation}
\frac{(x_{\mathrm{BCK }})^{2}}{\left(\frac{\sqrt2}2\cos\beta\right)^2}+
\frac{\left(z_{\mathrm{BCK }} +\frac12\right)^{2}}{\left(\frac12\right)^2} \leq1,
\plabel{eq:CRS}
\end{equation}
to the BCK model is also straightforward, yielding the statement.
For $\cos\beta=0$, i.~e.~$\beta=\pi/2$, $\DW_{\mathrm{pCK}}^{\mathbb R}(L_{\alpha,\pi/2}^\pm)$
 yields the segment connecting $(0,0)=\iota^{[2]}_{\mathrm{pCK}}(0)$ and $(0,1)=\iota^{[2]}_{\mathrm{pCK}}(1)$.
Transcribed to the BCK model, this yields the segment connecting
$\iota^{[2]}_{\mathrm{BCK }}(0)=(0,-1)$ and $\iota^{[2]}_{\mathrm{BCK }}(1)=(0,0)$; also yielding the statement.

The case of $\mathbf 0_2$ is trivial.

(Remark: The $\DW^{\mathbb R}_{\mathrm{pCK}}(A)$'s in question could also have been computed from
 \eqref{eq:doubac}, but, ultimately, I do not find that simpler.)
\end{proof}
\snewpage
\begin{remark}
\plabel{rem:CRall}
Using Theorem \ref{thm:CRtrans}, one can show that the conformal range in the BCK model
 yields all possible degenerate elliptical disks in the unit disk which do not contain the ``infinity'' point
  $\iota^{[2]}_{*}(\infty)=(0,1)$.
The statement is essentially that, using an $h$-collineation, any the elliptical
 disk strictly contained in the unit disk  can be brought into standard form considered above.
We sketch the main points of the argument here.
The nontrivial  cases are when the elliptical disks are non-degenerate:

(a)
Assume that the elliptical disk with equation $Q(x_{\mathrm{BCK}},z_{\mathrm{BCK }})\leq0$
 is contained interior of the unit disk, the latter is of equation $Q_0(x_{\mathrm{BCK}},z_{\mathrm{BCK }})
 =1-(x_{\mathrm{BCK}})^2-(z_{\mathrm{BCK}})^2\geq0$.
Then, for an appropriate $r>0$, the
 quadric $Q'=Q+rQ_0$ will be a point ellipse, with supporting point in the interior of the unit disk.
Then, for an appropriate Möbius transformation $f$,
 this point can be centered in the unit disk to $(0,0)$; i. e.
 $f^{\star}_{\mathrm{BCK}}Q'$ will be centered at the unit disk, while
 $f^{\star}_{\mathrm{BCK}}Q_0=Q_0$.
(In fact, $f^{\star}_{\mathrm{BCK}}$ is induced a by linear transformation $R_{\mathrm{BCK}}^{[2]}(f)$
 leaving the projectivized quadratic form $(w_{\mathrm{BCK}})^2 -(x_{\mathrm{BCK}})^2-(z_{\mathrm{BCK}})^2$
 invariant, etc.)
Then  $f^{\star}_{\mathrm{BCK}}Q=f^{\star}_{\mathrm{BCK}}Q'-rQ_0$ will also be centered at $(0,0)$.
Applying a further rotation around the origin, even the axes of $g^{\star}_{\mathrm{BCK}}f^{\star}_{\mathrm{BCK}}Q$ will be aligned
 to the $x,z$-axes.
 According to the previous theorem such well-aligned ellipses will occur as conformal ranges associated to $L^\pm_{\alpha,t}$
 ($\alpha\in(0,\pi/2]$).
(Here $f^{\star}_{\mathrm{BCK}}Q'$ was a point circle or a proper point ellipse, that
distinguishes the case of $h$-circles and proper $h$-ellipses ultimately.)

(b)
Assume that the elliptical disk with equation $Q(x_{\mathrm{BCK}},z_{\mathrm{BCK }})\leq0$
in the unit disk, but with one asymptotic (boundary point) $B$.
Then, for an appropriate $r>0$, we have the
 quadric $Q'=Q+rQ_0$  so that the gradient of $Q'$ will vanish at $B$; yet it will be supported
 on the boundary only at $B$.
Therefore $Q'$ will either be a double line tangent to the unit disk at $B$, or a point ellipse at $B$.
In the first case, using a $h$-collineation, $B$ can be brought to $(0,-1)$.
Then $f^{\star}_{\mathrm{BCK}}Q'$ will be a positive multiple of $((z_{\mathrm{BCK}}+1)^2)$.
(This is the horocyclic case.)
In any way, $f^{\star}_{\mathrm{BCK}}Q=f^{\star}_{\mathrm{BCK}}Q'-rQ$
 will have an interior point along line $x_{\mathrm{BCK}}=0$.
Using an $h$-translation along this line, this point can be brought to the origin $(0,0)$.
Then  $g^{\star}_{\mathrm{BCK}}f^{\star}_{\mathrm{BCK}}Q$ vanishes at $(0,0)$, so it will be a positive multiple of
 $(z_{\mathrm{BCK}}+1)^2-(1-(x_{\mathrm{BCK}})^2-(z_{\mathrm{BCK}})^2)=(x_{\mathrm{BCK}})^2+2(z_{\mathrm{BCK}})^2+2z_{\mathrm{BCK}}$
 served by $S_0$.
In the second case, $Q'=Q_t+Q_c$,
 where   $Q_t$ and $Q_c$ quadratic forms corresponding to double lines (i.~e.~they are of rank $1$),
 and $Q_t$ is chosen to be  tangent line at $B$. (This prescription determines the decomposition.)
The (double) line corresponding to $Q_c$ is the tangent-conjugate axis of $Q'$.
Again, using a collineation, $B$ can be brought to $(0,-1)$, and the tangent-conjugate axis can be brought to
 $x_{\mathrm{BCK}}=0$.
Then $f^{\star}_{\mathrm{BCK}}Q'$ is a positive linear combination of $(x_{\mathrm{BCK}})^2$
 and $(z_{\mathrm{BCK}}+1)^2$; and $f^{\star}_{\mathrm{BCK}}Q$ also contains $Q_0$ in the linear combination.
Applying a hyperbolic translation along $x_{\mathrm{BCK}}=0$, we can assume that $(0,0)$
 is on $g^{\star}_{\mathrm{BCK}}f^{\star}_{\mathrm{BCK}}Q=0$.
Then $g^{\star}_{\mathrm{BCK}}f^{\star}_{\mathrm{BCK}}Q$ will be a positive linear combination of  $(x_{\mathrm{BCK}})^2$
 and $(z_{\mathrm{BCK}}+1)^2-(1-(x_{\mathrm{BCK}})^2-(z_{\mathrm{BCK}})^2)=(x_{\mathrm{BCK}})^2+2(z_{\mathrm{BCK}})^2+2z_{\mathrm{BCK}}$.
 Again, short inspection yields that the $S_\beta$ ($\beta\in(0,\pi/2)$) realize these.
 (Here $Q'$ at $B$ was of rank $1$ or $2$, this distinguished the $h$-horocyclic and $h$-elliptic parabolic cases.)

(c)
Assume that the elliptical disk with equation $Q(x_{\mathrm{BCK}},z_{\mathrm{BCK }})\leq0$
in the unit disk, but with asymptotic (boundary) points $B_1$ having $B_2$.
Then $Q'=Q+rQ_0$ can be achieved to be a double line passing trough $B_1$, $B_2$, etc.,
and we have the $h$-distance band case corresponding to $L_t=L^\pm_{\alpha,t}$.
 with $\alpha=0$.

Ultimately, the conformal ranges of $2\times2$ complex matrices can be identified as possibly degenerate $h$-ellipses
 and $h$-elliptic parabolas in the asymptotically closed plane
 but which avoid the distinguished asymptotical point $\iota^{[2]}_{*}(\infty)$.
\qedremark
\end{remark}

\snewpage
\begin{disc}
\plabel{disc:CRall}

The conformal ranges of the canonical representatives are particularly suitable to clarify some geometric data;
 namely symmetries and related metric data.
(This is, of course, under the paradigm of \textit{not} having preliminary knowledge about $h$-conics.)
Firstly, we establish some terminology regarding major and minor axes.
Unfortunately these latter terms might refer to possibly infinite segments (primarily), but also to lines (of these segments), or to lengths (of these segments).
Hopefully, this will not cause too much confusion.

Consider $\DW^{\mathbb R}_{\mathrm{BCK}}(L_{\alpha,t}^\pm)$:
\begin{itemize}
\item
If $\alpha=0$, then the range is a $h$-distance band, which is recognizable from its position and shape.
(Regarding its boundary, it is like \eqref{eq:chorocc} but rotated around the origin by $90^\circ$, now
with asymptotic points $(\pm1,0)$.)
The major axis [segment] can be defined as the single $h$-line contained in $\DW^{\mathbb R}_{\mathrm{BCK}}(L_{\alpha,t}^\pm)$.
It allows several minor axes [segments] which are intersections of the range with any line perpendicular to the major axis.
Note, however, that minor axes [lines] are defined naturally, even if the minor axes [segments] degenerate to points.

\item
If  $0<\alpha<\pi/2$, then the range is bounded but not circular.
It is contained in a (unique) disk of minimal radius in Euclidean view.
Then it is easy to see that it is also the (unique) $h$-disk of minimal radius containing it,
as a segment in range is a diameter of the $h$-disk (thus a smaller $h$-disk containing it is impossible).
This diameter is the major axis [segment].
The intersection of the range and the  perpendicular bisector of the major axis [segment] is the minor axis [segment].
Note that the line of minor axis well-defined, even if the minor axis [segment] reduces to a point.
As it is different from the minimal $h$-disk containing it, we must call this a proper $h$-elliptical disk.

\item
If  $\alpha=\pi/2$, then the range is bounded and circular, it is an $h$-disk.
As this is a familiar case, we simply note that there are several possible major and minor axes [lines] perpendicular to each other.
\end{itemize}

Consider $\DW^{\mathbb R}_{\mathrm{BCK}}(S_\beta)$:
\begin{itemize}
\item
If $\beta=0$, then we have a horodisk.
(One can see that the equation of its boundary,
\[\left(x_{\mathrm{BCK}}\right)^2+2\left(z_{\mathrm{BCK}}\right)^2+2\,z_{\mathrm{BCK}}=0\]
is just \eqref{eq:choro}, but rotated around the origin by $180^\circ$, now with asymptotic point $(0,-1)$.)
Major axes [segments] are intersections of the horodisk with its axes.
Minor axes [segments or lines] are not defined at all, but the ``length of the minor axis'' is set to be $+\infty$.

\item
If  $0<\beta<\pi/2$, then the range is a subset of the previous case.
It has the has the same asymptotic point, but in contrast to it, it has a single
point ``farthest'' from the asymptotic point (as all other boundary points are contained in the interior of the previous horodisk).
Now any symmetry axis must go through the asymptotic point, and also through the single point farthest
from the asymptotic point. Thus the range must have a single symmetry axis (obviously given by $x_{\mathrm{BCK}}=0$).

The major axis [segment] is its intersection with its single symmetry axis.
Minor axes [segments or lines] are not defined at all, but the ``length of the minor axis'' is set as follows.
We consider the intersections of the range with the lines perpendicular to the major axis.
Intersections will yield segments whose lengths, in the present case, limit as
\begin{multline*}
\lim_{z\searrow-1} \mathrm d^{\mathrm{BCK}}\left( \left( \sqrt{-2z(1+z)}\cos\beta,z\right), \left( -\sqrt{-2z(1+z)}\cos\beta,z\right)   \right)=\\
=\lim_{z\searrow-1}\arcosh \frac{1-z-2z(\cos\beta)^2}{1-z+2z(\cos\beta)^2}=\arcosh \frac{1+(\cos\beta)^2}{1-(\cos\beta)^2}=2\arcosh \frac1{\sin\beta}.
\end{multline*}
This limit, or even better, supremum,  can be defined as the (properly: asymptotic) length of the minor axis.
In fact, by the previous computation we even know that the range part of a distance band around the axis.
Considering the monotonicity relations in the limit, we see that the minimal radius
 for such a distance band is $\arcosh \frac1{\sin\beta}$, the semi-axis length.
An alternative argument to the same end is as follows.
We see that $\DW^{\mathbb R}_{\mathrm{BCK}}(S_\beta)$ is obtained from $\DW^{\mathbb R}_{\mathrm{BCK}}(S_0)$ by taking
 a perpendicular affinity of ratio $\cos\beta$ to the axis $x_{\mathrm{BCK}}=0$.
Therefore, $\DW^{\mathbb R}_{\mathrm{BCK}}(S_\beta)$ in contained in image of the unit disk
 through the perpendicular affinity of ratio $\cos\beta$ to the axis $x_{\mathrm{BCK}}=0$.
This is an elliptical disk with axes $\{0\}\times[-1,1]$ and $[-\cos\beta,\cos\beta]\times\{0\}$.
This is $h$-distance band.
One can compute relatively simply that its radius is $\arcosh\frac1{\sin\beta}$.
One can also see that $\DW^{\mathbb R}_{\mathrm{BCK}}(S_\beta)$ cannot be contained in any more meager $h$-distance band,
 because that would contradict to the (hyper)osculating nature of $\DW^{\mathbb R}_{\mathrm{BCK}}(S_0)$.
Also for this reason, every more meager $h$-distance band is contained in the
range around the relevant asymptotic point.

We  define the vertex of the range as the
 non-asymptotical endpoint of the major axis [segment], which is also the farthest point from the asymptotic point.

In any case, the range has an asymptotic point, it is not a horodisk, thus it is  an $h$-elliptical parabolic disk.

\item
If  $0<\beta=\pi/2$, then the range is a half-line, the major axis length is also infinite, and the ``length of the minor axis'' is $0$.
\end{itemize}
Consider $\DW_*(\mathbf 0_2)$:
\begin{itemize}
\item
The major axis [segment] is the range itself.
The major axis [line] can be considered as any line asymptotical to the range.
Minor axes [segments or lines] are not defined at all.
The lengths of the major and minor axes can be considered as $0$.
\end{itemize}
Although we have just considered the canonical examples, the definitions above were set up in terms of hyperbolic geometry
(using symmetries and distances).
Consequently, they make sense generally (i.~e.~up to $h$-congruences), without any reference to any theory of $h$-conics.
In particular, in all cases we have defined the lengths of the major and minor axes, and
 we have identified  natural symmetries.
\end{disc}
\begin{proof}[Proof of Theorem \ref{thm:CRdonc}]
Due to the natural transformation properties (as in Theorem \ref{thm:CRtrans},
 Lemma \ref{lem:eigdistinv}) it is sufficient to prove this
 only for the canonical representatives of Lemma \ref{lem:preCR}.
The previous Discussion \ref{disc:CRall} (among others things) identifies the geometric type of the conformal range
 (like asymptotically closed $h$-half line, or $h$-elliptic parabolic disk, etc.).
Comparing Lemma \ref{lem:eigpoints} and Theorem \ref{cor:CR} also informs us about the
relative position of the $h$-eigenpoints.
\end{proof}
\snewpage
 Next we will consider the (semi)axis lengths defined in Discussion \ref{disc:CRall}.
\begin{disc}\plabel{ex:CRcont}
In the case of $L_{\alpha,t}^\pm$, we may say that its characteristic BCK values are
\begin{commentx}
\[s^+=\artanh \frac{\sqrt{(\cos\alpha)^2+t^2}}{\sqrt{1+t^2} } ,\qquad
s^-=\artanh \frac{t}{\sqrt{1+t^2}}  ,\qquad
s^{\mathrm f}=\artanh \cos\alpha  .\]
\end{commentx}
\[\chi^+=\frac{\sqrt{(\cos\alpha)^2+t^2 }}{\sqrt{1+t^2}}  ,\qquad \chi^-=\frac{t}{\sqrt{1+t^2}}
 ,\qquad  \chi^{\mathrm e}=\cos\alpha.\]
These correspond to the major and minor semiaxes of the (possibly degenerate)
 ellipses $\DW^{\mathbb R}_{\mathrm{BCK}}(L_{\alpha,t}^\pm)$
  measured from $(0,0)$ in Euclidean view; and to the distance of $h$-eigenpoints $(\pm\cos\alpha)$ from $(0,0)$ in Euclidean view.
In terms of hyperbolic geometry, these values (the major semi-axis length, the major semi-axis length, and the half $h$-eigendistance) are
\[s^+=\arcosh \frac{\sqrt{1+t^2}}{\sin\alpha}  \qquad  s^-=\arcosh \sqrt{1+t^2}  \qquad
s^{\mathrm e}=\arcosh \frac1{\sin\alpha},\]
respectively, cf.~\eqref{eq:distex2}. (The major semi-axis is $+\infty$ for $\alpha=\pi/2$.)
If the minor semi-axis is $0$, then it may yield an $h$-point, an $h$-segment, or an $h$-line.
If the minor axis is non-zero, then it yields a $h$-distance band or an   $h$-circle or a proper $h$-ellipse.

In the case of $S_{\beta}$, we may say (by declaration) that its characteristic BCK values are
\[\chi^+=1  ,\qquad  \chi^-=\cos\beta ,\qquad   \chi^{\mathrm e}=\sgn\beta.\]
They correspond to the hyperbolic major and minor semi-axes lengths and half $h$-eigendistance
\[s^+=+\infty  ,\qquad  s^-=\arcosh \frac{1}{\sin\beta}  ,\qquad  s^{\mathrm e}=(\sgn\beta)\cdot(+\infty).\]
(The minor semi-axis is $+\infty$ for $\beta=0$.)
The minor semi-axis can be $0$, non-zero finite or $+\infty$.
Then it yields an $h$-half-line, a  $h$-elliptic parabola, or an $h$-horosphere, respectively.

The case $\mathbf 0_2$ corresponds to an asymptotical point.
The wisest choice is  to assign the characteristic values
\[\chi^+=0 ,\qquad  \chi^-=0, \qquad \chi^{\mathrm e}=0\]
and
\[s^+=0 \qquad s^-=0, \qquad s^{\mathrm e}=0.\]
In general, a  characteristical BCK value $u$ corresponds to hyperbolic semi-axis length
\[\arcosh \frac1{\sqrt{1-u^2}}=\arcosh \frac u{\sqrt{1-u^2}}=\artanh u=\frac12\arcosh\frac{1+u^2}{1-u^2},\]
cf.  \eqref{eq:distex2}. \qedexer
\end{disc}
As the representatives in Theorem \ref{cor:CR} cover all cases
up to real M\"obius transformations, i.~e.~$h$-isometries in the conformal range, the terminology
`characteristic BCK values'  based on the previous discussion can also be applied to arbitrary $2\times2$ complex matrices.
\snewpage

\begin{example}\plabel{ex:dconfrep}

Checking the canonical representatives of Lemma \ref{lem:preCR}, we find:

(a) For $A=L_{\alpha,t}^\pm$,
\[U_A-|D_A|=2t^2,\qquad \{U_A+|D_A|,U_A-|D_A|+2E_A\}=\{2((\cos\alpha)^2+t^2) , 2(1+t^2)\}.\]

(b) For $A=S_{\beta}$,
\[U_A-|D_A|=\tfrac12{(\cos\beta)^2},\qquad \{U_A+|D_A|,U_A-|D_A|+2E_A\}=\left\{\tfrac12,\tfrac12\right\}.\]

(c) For $A=\mathbf0_2$,
\[U_A-|D_A|=0,\qquad \{U_A+|D_A|,U_A-|D_A|+2E_A\}=\left\{0,0\right\}.\eqedexer\]
\end{example}

\begin{disc}\plabel{disc:confrep}
The triple ratio
\begin{equation}
U_A-|D_A|\,\,:\,\,U_A+|D_A|\,\,:\,\,U_A-|D_A|+2E_A
\plabel{eq:confrep}
\end{equation}
is just as appropriate in Corollary \ref{cor:rconftriple} as \eqref{eq:preconfrep}
(having the invertible matrix  $\left[\begin{smallmatrix}1&1&1\\-1&1&-1\\0&0&2
\end{smallmatrix}\right]$ applied to the ratio).
As the previous example shows, making indistinction in the order of the last two entries of  \eqref{eq:confrep},
 with some abuse of notation,
\[
U_A-|D_A|\,\,:\,\{\,U_A+|D_A|\,\,:\,\,U_A-|D_A|+2E_A\,\}
\]
captures the commonality of the classes of $L^+_{\alpha,t}$ and $L^-_{\alpha,t}$.
Actually, as $U_A-|D_A|$ is distinguished as the entry of the smallest absolute value,
the same can be said about the unordered ratio
\[
\{\,U_A-|D_A|\,\,:\,\,U_A+|D_A|\,\,:\,\,U_A-|D_A|+2E_A\,\}.\eqed
\]
\end{disc}

\begin{theorem}\plabel{thm:finv1}
The unordered ratio
\begin{equation}
\{U_A-|D_A| \,:\, U_A+|D_A| \,:\, U_A-|D_A|+2E_A\}
\plabel{eq:finv1}
\end{equation}
and (when $|D_A|=E_A>0$) the possible choice of type of possibly degenerate
\[\text{$h$-distance band / $h$-elliptic parabolic disk}\]
together form a full invariant of the conformal range up to $h$-isometries.

If \eqref{eq:finv1} is $\{\chi_1:\chi_2:\chi_3\}$ with $0\leq\chi_1\leq\chi_2\leq\chi_3$,
 then ``characteristic BCK values'' can be obtained as follows:
\[\chi^+=\sqrt{\frac{\chi_2}{\chi_3}},\qquad
\chi^-=\sqrt{\frac{\chi_1}{\chi_3}},\qquad
\chi^{\mathrm e}=\sqrt{\frac{\chi_2-\chi_1}{\chi_3-\chi_1}}.\]
(Here the convention $\frac00=0$ applies.)

From a ``characteristic BCK value'' $u$, the corresponding major or minor semi-axis length
or half $h$-eigendistance is
$\arcosh \frac1{\sqrt{1-u^2}}=\arsinh \frac u{\sqrt{1-u^2}}=\artanh u=\frac12\arcosh\frac{1+u^2}{1-u^2}$.
\begin{proof}
The first part of the statement is Corollary \ref{cor:rconftriple} and the previous Discussion \ref{disc:confrep} taken together.
The second part is  Discussion \ref{ex:CRcont} in view of Example \ref{ex:dconfrep}.
\end{proof}
\end{theorem}
\snewpage
The following statement expresses the information regarding the major and minor (semi-)axes
in somewhat more down-to-earth terms (similarly to Theorem \ref{lem:eigdist}).
\begin{theorem}\plabel{rem:recoverBCK}
Assume that $A$ is a $2\times2$ complex matrix.

(a) If $E_A\geq |D_A|$ (the inequality is strict in the complex-parabolic and quasihyperbolic cases), then  the characteristic BCK values are
\[ \chi^+=\sqrt{\frac{U_A+|D_A|}{U_A-|D_A|+2E_A}} \qquad \geq \qquad\chi^-= \sqrt{\frac{U_A-|D_A|}{U_A-|D_A|+2E_A}} .\]
The corresponding $h$-distances (the semi-axes), the $\artanh$'s of the values above, are
\[s^+=\frac12\arcosh \frac{U_A+E_A}{E_A-|D_A|}\qquad \geq \qquad s^-=\frac12\arcosh \frac{U_A-|D_A|+E_A}{E_A}.\]

(b) If $|D_A| \geq E_A$ (the inequality is strict in the real-elliptic and quasielliptic cases), then  the characteristic BCK values are
\[ \chi^+=\sqrt{\frac{U_A-|D_A|+2E_A}{U_A+|D_A|}} \qquad \geq \qquad\chi^-=\sqrt{\frac{U_A-|D_A|}{U_A+|D_A|}  } .\]
The corresponding $h$-distances (the semi-axes), the $\artanh$'s of the values above, are
\[s^+=\frac12\arcosh \frac{U_A+E_A}{|D_A|-E_A}\qquad \geq \qquad s^-=\frac12\arcosh \frac{U_A}{|D_A|}.\]

(c) In the case $E_A=|D_A|$ (encompassing the real-parabolic, real-hyperbolic, and semi-real cases), the characteristic BCK values are
\[ \chi^+=1 \qquad \geq \qquad\chi^-=\sqrt{\frac{U_A-|D_A|}{U_A+|D_A|}  }\equiv \sqrt{\frac{U_A-E_A}{U_A+E_A}  } .\]
The corresponding $h$-distances (the semi-axes), the $\artanh$'s of the values above, are
\[s^+=+\infty \qquad \geq \qquad s^-=\frac12\arcosh \frac{U_A}{|D_A|}\equiv\frac12\arcosh \frac{U_A}{E_A}.\]

(At the characteristic BCK values the convention $\frac00=0$ was used;
at the $h$-distances, the convention $\frac00=1$ was used.)

\begin{proof}
This as in the previous theorem but the characteristic BCK value $u$
 is transcribed to the corresponding semi-axis length as $ \frac12\arcosh\frac{1+u^2}{1-u^2}$.
\end{proof}

\end{theorem}
\begin{remark}
\plabel{rem:crossing}
Note that in case (b),  if  $|D_A|>E_A$, then the axis of the Davis--Wielandt shell crosses the real hyperbolic plane,
 the minor axis $s^-$ will be the same as the radius of the Davis--Wielandt shell
 (there will be an actual radial segment laying on the real hyperbolic plane).
In case (c), i.~e.~if $|D_A|=E_A$, the real hyperbolic plane still have, at least asymptotically,
  an axial point of the Davis--Wielandt shell (there are several axes in the parabolic case), the same is true about $s^-$.
In case (a), if  $E_A>|D_A|$, then the Davis--Wielandt shell stays well away from the real hyperbolic plane.
Then the projection to the real hyperbolic plane is dominantly contractive with respect to radial segments 
(we do not define this precisely),
 and $s^-$  will indeed be smaller than the radius of  the Davis--Wielandt shell.
The values $s^+$ and $s^{\mathrm e}$ behave formally quite similarly.
      \qedremark
\end{remark}


\snewpage
\subsection{The synthetic geometry of the conformal range}
\plabel{ssub:CRsynth}
~\\

Here we will consider the synthetic interpretations of our (originally analytically given) $h$-ellipses and
$h$-elliptic parabolas.

(X) Assume that $A$ is a complex $2\times2$ matrix with no real eigenvalues.
Let $ \Lambda_1 $ and $ \Lambda_2 $ be its $h$-eigenpoints.
Let $\boldsymbol x $ be a hyperbolic point (with coordinates $\boldsymbol x_{\mathrm{BCK }}=( x_{\mathrm{BCK }} , z_{\mathrm{BCK }})$
in the $\mathrm{BCK }$ model).
We will use the abbreviations
\[f_1=\mathrm d ( \boldsymbol x ,  \Lambda_1  ),\qquad
f_2=\mathrm d (  \boldsymbol x,  \Lambda_2  ),\qquad
m^+=\arcosh \frac{U_A+E_A}{\left| |D_A|-E_A\right|},\]
the latter value being the length of the major axis of $\DW^{\mathbb R}(A)$
computed according to Theorem \ref{rem:recoverBCK}.

\begin{theorem}
\plabel{cor:CRER1}
Let us consider the setup of (X).
Then the  points $\boldsymbol x $ of $\DW ^{\mathbb R}(A)$
are described as the solutions of the inequality
\begin{equation}
f_1+f_2\leq m^+,
\plabel{eq:ellipo}
\end{equation}
with equality for the boundary.
Thus, the boundary is the synthetic $h$-ellipse with the $h$-eigenpoints as the foci and the length of the major axis as the distance sum.
\end{theorem}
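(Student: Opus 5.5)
The plan is to reduce to the canonical representatives and then verify the distance-sum inequality there by hand. Both sides of \eqref{eq:ellipo} behave naturally under real M\"obius transformations and unitary conjugation. The conformal range transforms by Theorem~\ref{thm:CRtrans} and Theorem~\ref{thm:CRronc}. The $h$-eigenpoints transform by $h$-isometries (Lemma~\ref{lem:eigdistinv}), so $f_1,f_2$ transform consistently. The quantity $m^+$ depends only on the ratio $U_A:|D_A|:E_A$, which is invariant: Lemma~\ref{lem:UDconf} and Lemma~\ref{lem:EDinvar} give this, and Theorem~\ref{rem:recoverBCK} expresses $m^+$ through the ratio. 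Since $A$ has no real eigenvalues, Lemma~\ref{lem:preCR} reduces us to $L^\pm_{\alpha,t}$ with $\alpha\in(0,\pi/2]$.

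For $A=L^\pm_{\alpha,t}$, Theorem~\ref{cor:CR} and \eqref{eq:CRL} give the range as the axis-aligned Euclidean elliptical disk in BCK with semi-axes $a=\chi^+=\sqrt{(\cos\alpha)^2+t^2}/\sqrt{1+t^2}$ (along $x$) and $b=\chi^-=t/\sqrt{1+t^2}$. By Lemma~\ref{lem:eigpoints}, the foci are $\Lambda_{1,2}=(\pm c,0)$ with $c=\cos\alpha$. Discussion~\ref{ex:CRcont} and Theorem~\ref{rem:recoverBCK} show that $m^+=2\artanh a$. Indeed $\arcosh\frac{1+a^2}{1-a^2}=2\artanh a$, and the example values give $\frac{1+a^2}{1-a^2}=\frac{U_A+E_A}{|D_A|-E_A}$ in either sign case. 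So the claim becomes the following statement in the BCK disk: the set
\[
\{\boldsymbol x:\ \mathrm d(\boldsymbol x,(c,0))+\mathrm d(\boldsymbol x,(-c,0))\le 2\artanh a\}
\]
equals $\{x^2/a^2+z^2/b^2\le1\}$, where $b^2=\frac{a^2-c^2}{1-c^2}$ (check: $a^2-c^2=t^2(1-c^2)/(1+t^2)$).

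To verify this, use \eqref{eq:dis2}. Put $w=\sqrt{1-x^2-z^2}$ and $\gamma=1/\sqrt{1-c^2}$. Then $\cosh f_{1,2}=\gamma(1\mp cx)/w$. Write $S=f_1+f_2$ and $M=m^+$. The condition $\cosh S=\cosh M$ can be handled via
\[
\cosh(f_1+f_2)=\cosh f_1\cosh f_2+\sinh f_1\sinh f_2 .
\]
Isolate the $\sinh$ product and square to get a polynomial equation in $x,z$. Monotonicity of $\cosh$ on $[0,\infty)$ lets us pass from $\cosh S\le\cosh M$ to $S\le M$. The squaring, together with $\cosh M=\frac{1+a^2}{1-a^2}$, should collapse to $(x^2/a^2+z^2/b^2-1)\cdot(\text{positive factor})=0$. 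I expect the factor to be a multiple of $w^2$ or of $(1-a^2)$. This algebraic simplification is the main obstacle. To keep it manageable, I would first test it on the axes: at $(a,0)$ one gets $S=\artanh a+\artanh a$ after using $\artanh$ additivity, since $f_1=\artanh a-\artanh c$ and $f_2=\artanh a+\artanh c$. At $(0,b)$ one gets $2\arcosh(\gamma/\sqrt{1-b^2})=M$ by \eqref{eq:Pellip}. These two checks fix the normalization.

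An alternative route avoids the heavy squaring. Since $S$ is a convex function along $h$-lines, its sublevel sets are convex, and each level set $\{S=M\}$ is a simple closed curve. It then suffices to show that the level curve is an algebraic conic through the four vertices, symmetric in both axes. I would still favor the direct computation, with the equality case giving the boundary statement.

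Finally, I would handle the degenerate case $t=0$ separately. There $b=0$, and the range is the segment $[\Lambda_1,\Lambda_2]$ with $m^+=\mathrm d(\Lambda_1,\Lambda_2)$, consistent with Theorem~\ref{lem:eigdist}. The inequality $f_1+f_2\le \mathrm d(\Lambda_1,\Lambda_2)$ then holds exactly on that segment by the triangle inequality. The case $\alpha=\pi/2$, where $c=0$, is the circle case and follows from the same formula.
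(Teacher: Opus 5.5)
Your proposal is correct and follows the paper's proof. You reduce to $L^\pm_{\alpha,t}$ with $0<\alpha\le\pi/2$, then turn $f_1+f_2\le m^+$ into a polynomial condition via the $\cosh$ addition formula and squaring; this does collapse to \eqref{eq:CRL} times a positive factor, though the factor is proportional to $1/(1-x^2-z^2)$, not to $w^2$. The paper performs your isolate-and-square step by multiplying $\cosh(f_1+f_2)-\cosh m^+$ by $\cosh m^+-\cosh(f_1-f_2)$, which is positive because $|f_1-f_2|\le\mathrm d(\Lambda_1,\Lambda_2)<m^+$ for $t>0$; you should add this observation, since it makes the squared inequality (not just the equality) fully equivalent and excludes extraneous roots.
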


\begin{proof}[Proof without previous knowledge on $h$-ellipses.]
It is sufficient to check to the canonical representatives.
This means $L_{\alpha,t}^\pm$ with $0< \alpha\leq\pi/2$.
Then
\[(\Lambda_1)_{\mathrm{BCK}}=(\cos\alpha,0),
\qquad\text{and}\qquad
(\Lambda_2)_{\mathrm{BCK}}=(-\cos\alpha,0). \]
Furthermore,
\[\frac{U_A+E_A}{\left| |D_A|-E_A\right|}=\frac{1+2t^2+(\cos\alpha)^2}{(\sin\alpha)^2}.\]
The normal case ($t=0$) is trivial, so we will assume that the elliptical disk is non-degenerate (thus $t>0$).
~
Now, \eqref{eq:ellipo} is equivalent to
\begin{equation}
\cosh(f_1+f_2)-\cosh m^+\leq0.
\plabel{eq:ellipo2}
\end{equation}
As $|f_1-f_2|<m+$, we know $\cosh m^+-\cosh(f_1+f_2)>0$,
thus the inequality above is fully equivalent to
\begin{equation}
\left( \cosh(f_1+f_2)-\cosh m^+ \right)\left( \cosh m^+ -\cosh(f_1-f_2) \right)\leq0.
\plabel{eq:ellipo3}
\end{equation}
Using standard addition formulae, this transcribes as
\begin{equation}
 1- (\cosh f_1)^2-(\cosh f_2)^2-(\cosh m^+)^2+2(\cosh f_1)(\cosh f_2)(\cosh m^+)\leq0.
 \plabel{eq:ellipo4}
\end{equation}
(The LHS is a symmetric expression in $f_1,f_2,m^+$.)
This form is particularly suitable to substitute the concrete expressions of
$f_1=\mathrm d^{\mathrm{BCK}}(( x_{\mathrm{BCK }} , z_{\mathrm{BCK }}), (\cos\alpha,0) )$,
$f_2=\mathrm d^{\mathrm{BCK}}(( x_{\mathrm{BCK }} , z_{\mathrm{BCK }}), (-\cos\alpha,0) )$,
$m^+=\arcosh\frac{1+2t^2+(\cos\alpha)^2}{(\sin\alpha)^2}$ in the BCK model, which are $\arcosh$'s.
Then \eqref{eq:ellipo4} transcribes to
\[\frac4{(\sin\alpha)^4}\cdot
\frac{ t^2(1+t^2)(x_{\mathrm{BCK}})^2+ ((\cos\alpha)^2+t^2)(1+t^2)(z_{\mathrm{BCK}})^2 - t^2((\cos\alpha)^2+t^2) }{
1- (x_{\mathrm{BCK}})^2-(z_{\mathrm{BCK}})^2 }\leq0.
\]
This is equivalent to $\eqref{eq:CRL}$ (for ordinary hyperbolic points).
\end{proof}
This, in particular, proves that our (analytic) $h$-ellipses are synthetic $h$-ellipses.
In fact, in light of Remark \ref{rem:CRall}, it proves that all (analytic) $h$-ellipses are synthetic $h$-ellipses.

If we are a bit more familiar with $h$-conics, but we still pursue the synthetic approach to the foci,
 then the statement above can be obtained in another way:

\begin{proof}[Proof with  previous knowledge on $h$-ellipses.]
We know that we deal with an $h$-elliptical disk ($h$-circle allowed).

Let $s^+$ be the $h$-length of its major semi-axis, $s^-$  be the $h$-length of its minor semi-axis,
and let $s^{\mathrm e}$ be half of the $h$-eigendistance of $A$
We claim that
\begin{equation}
\cosh s^+=(\cosh s^-)(\cosh s^{\mathrm e})
\plabel{eq:neus}
\end{equation}
holds.
It is sufficient to check the identity squared.
That is
\begin{equation}
\frac{1+\cosh 2s^+}2=\frac{1+\cosh 2s^-}2\frac{1+\cosh 2s^{\mathrm e}}2.
\plabel{eq:sassa}
\end{equation}
This, however, is already transparent from Theorem \ref{rem:recoverBCK} and Theorem \ref{lem:eigdist}.
Indeed,
if $E_A> |D_A|$ then \eqref{eq:sassa} is equivalent to
\[\frac{1+\frac{U_A+E_A}{E_A-|D_A|}}2=\frac{1+\frac{U_A-|D_A|+E_A}{E_A}}2\cdot \frac{1+\frac{ E_A+|D_A|}{E_A-|D_A|}}2 ;\]
if $|D_A| > E_A$   then \eqref{eq:sassa} is equivalent to
\[\frac{1+ \frac{U_A+E_A}{|D_A|-E_A}}2=\frac{1+ \frac{U_A}{|D_A|}}2\cdot \frac{1+\frac{ E_A+|D_A|}{|D_A|-E_A}}2 .\]
Each case is easy to check.

Now, comparing \eqref{eq:neus} to \eqref{eq:Pellip}
 implies that $ s^{\mathrm e}=s^{\mathrm f}$, thus the focal distance is equal to the $h$-eigendistance.
As we know from the BCK picture that both the $h$-eigenvalues and the foci must be in symmetric positions
on the major axis, and they are of the same distance from each other, they must be the same.
\end{proof}

(Y)
Suppose that $A$  is a complex $2\times 2$ matrix with a strictly complex eigenvalue $\lambda$ and a real eigenvalue $\lambda_0$.
Let the corresponding $h$-eigenpoints be $\Lambda$ and $\Lambda_0$, respectively.
Using the canonical representatives $S_\beta$ with $\beta\in(0,\pi/2]$, we
 know $\DW^{\mathbb R}(A)$ is an $h$-elliptic parabolic disk has single symmetry axis, containing
$\Lambda$ and $\Lambda_0$ and a vertex $V$.
Our objective is to demonstrate that this $h$-elliptic parabolic disk has an appropriate synthestic presentation with focus $\Lambda$.
Let $\mathrm d_0$ be a distance function from $\Lambda_0$.
This is unique only up to  adding constant scalar.

Let $\boldsymbol x$ denote hyperbolic points in general.
Then the function
\[\boldsymbol x\mapsto\mathrm d_0(\boldsymbol x)+\mathrm d_0(\Lambda)-2\mathrm d_0(V)\]
is also a distance function from $\Lambda_0$.
It is also the signed distance function from the horocycle
\[C=\{\tilde{\boldsymbol x}\,:\, \mathrm d_0(\tilde{\boldsymbol x})+\mathrm d_0(\Lambda)-2\mathrm d_0(V)=0\}.\]
This horocycle $C$ passes through the point $P$ which is the point $\Lambda$ reflected through $V$.
($V$ and $\Lambda$ are interior points where $\mathrm d_C$ is negative.)

\snewpage
\begin{theorem}
\plabel{cor:CRER2}
Let us consider the setup (Y).
Then the points $\boldsymbol x$ of the conformal range are given by the inequality
\begin{equation}
\mathrm d(\boldsymbol x,\Lambda) +\underbrace{\mathrm  d_0(\boldsymbol x)+\mathrm d_0(\Lambda)-2\mathrm d_0(V)
}_{\mathrm d_C(\boldsymbol x)}\leq0,
\plabel{eq:onsa}
\end{equation}
with equality on the boundary.

If $A$ is not normal, then the conformal range is also described by
\begin{equation}
 \mathrm d(\boldsymbol x,\Lambda) \leq|\mathrm d_C( \boldsymbol x)|,
 \plabel{eq:monsa}
\end{equation}
with equality on the boundary.
This boundary of the conformal range is the $h$-elliptic parabola, whose points
are of equal distance from $\Lambda$ and from the horocycle $C$ with equation
$\mathrm d_0(\tilde{\boldsymbol x})+\mathrm d_0(\Lambda)-2\mathrm d_0(V)=0$.
In other terms, this is the synthetic $h$-elliptic parabola with vertex $V$ and focus $\Lambda$.
\end{theorem}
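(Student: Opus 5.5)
By the natural transformation properties (Theorem \ref{thm:CRtrans}, Lemma \ref{lem:eigdistinv}), both sides of \eqref{eq:onsa} transform under $h$-isometries. In particular $V$, $\Lambda$, $\Lambda_0$ and the horocycle $C$ are defined intrinsically. So it suffices to verify the statement for the canonical representatives $S_\beta$, $\beta\in(0,\pi/2]$. The case $\beta=\pi/2$ (normal, an $h$-half line) I would treat separately and directly: on the axis \eqref{eq:onsa} is an equality for points between $\Lambda$ and $\Lambda_0$, and off the axis it is strict by the triangle inequality.

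So assume $0<\beta<\pi/2$ and work in the Ph model, where $\Lambda_0=0$ becomes the point $\infty$ after a suitable real Möbius map. Concretely, I would apply $\lambda\mapsto-1/\lambda$, which sends the eigenvalues $0,\mathrm i\sin\beta$ to $\infty$ and $\mathrm i/\sin\beta$. Now the horocycles at $\infty$ are horizontal lines, and the regularized distance is $\mathrm d_0(w)=-\log z_{\mathrm{Ph}}$ (as computed for $\tilde C$). The range becomes symmetric about the imaginary axis, with $\Lambda=(0,1/\sin\beta)$. Its vertex $V$ is the lowest point of the range on the imaginary axis. I would read $V$ off from Theorem \ref{cor:CR}: in BCK the vertex of $S_\beta$ is $(0,0)$, i.e. $\iota_{\mathrm{BCK}}^{[2]}(\mathrm i)$, which maps to $V=(0,1)$ in the transformed Ph picture. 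Hence $\mathrm d_C(\boldsymbol x)=-\log z+\log\frac{1}{1/\sin\beta}\cdot$ (up to the constant fixed by the definition), namely $\mathrm d_C=-\log z-\log(1/\sin\beta)+0=\log\frac{\sin\beta}{z}$. So $C$ is the line $z=\sin\beta$, which is indeed the reflection height of $\Lambda$ through $V$.

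Next, \eqref{eq:onsa} becomes $\mathrm d^{\mathrm{Ph}}(\boldsymbol x,\Lambda)\le\log(z/\sin\beta)$. Since the right side must be nonnegative, this is equivalent to $\cosh \mathrm d\le\cosh\log(z/\sin\beta)$ for $z\ge\sin\beta$. By \eqref{eq:dis2Ph} this is a rational inequality
\[1+\frac{x^2+(z-1/\sin\beta)^2}{2z/\sin\beta}\le\frac12\Bigl(\frac{z}{\sin\beta}+\frac{\sin\beta}{z}\Bigr),\]
and it simplifies to a quadratic inequality in $x,z$. I would then compare this with the image of \eqref{eq:CRS} under the transcription and the map $\lambda\mapsto-1/\lambda$. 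The simplest way to do that is to transfer the quadric $(x_{\mathrm{BCK}})^2/(\frac{\sqrt2}{2}\cos\beta)^2+(z_{\mathrm{BCK}}+\frac12)^2/(\frac12)^2\le1$ to Ph coordinates via $x_{\mathrm{pCK}}=x$, $z_{\mathrm{pCK}}=x^2+z^2$ and check that the two agree after clearing positive denominators. The lower branch $z<\sin\beta$ is automatically excluded, because such points lie outside the range; this follows from the vertex location.

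For \eqref{eq:monsa}, note that in the nonnormal case every range point satisfies $\mathrm d_C\le -\mathrm d(\boldsymbol x,\Lambda)\le0$. Conversely, points with $\mathrm d_C>0$ lie outside $C$, and there I must exclude $\mathrm d(\boldsymbol x,\Lambda)\le \mathrm d_C(\boldsymbol x)$. This holds because the distance from $\Lambda$ to such a point is at least the distance from $\Lambda$ to $C$ plus $\mathrm d_C(\boldsymbol x)$, and the distance from $\Lambda$ to $C$ is positive. The main obstacle is the algebraic identification of the two quadrics; the sign and branch bookkeeping is routine.
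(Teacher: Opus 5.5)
Your argument is correct and is essentially the paper's proof. You reduce to $S_\beta$, convert the distance inequality into a $\cosh$-inequality, match it with the explicit equation \eqref{eq:CRS} of the range, and treat the sign of $\mathrm d_C$ and the normal case $\beta=\pi/2$ separately. The only differences are that you normalize in Ph with $\Lambda_0$ at $\infty$, where both the transformed range and the inequality become $x^2\le\cot^2\beta\,(z^2-1)$ (this checks out), and that you derive \eqref{eq:monsa} by a $1$-Lipschitz argument, whereas the paper proves \eqref{eq:monsa} first in BCK and then uses $\DW^{\mathbb R}(S_\beta)\subset\DW^{\mathbb R}(S_0)$.

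One caveat: $\lambda\mapsto-1/\lambda$ is not applicable to the matrix $S_\beta$, since $0$ is an eigenvalue. So it must be used only as an $h$-isometry moving the whole configuration, not via Theorem \ref{thm:CRtrans}.
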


\begin{proof}[Proof without preliminary knowledge on $h$-elliptic parabolas.]

This is sufficient to check for the canonical representatives $S_\beta$ with $\beta\in(0,\pi/2]$.
Then
\[V_{\mathrm{BCK}}=(0,0),\qquad
\Lambda_{\mathrm{BCK}}
=\left(0,\frac{(\cos\beta)^2}{(\cos\beta)^2-2}\right),
\qquad\text{and}\qquad(\Lambda_0)_{\mathrm{BCK}}=(0,-1) .\]
Moreover, transcribed from $\mathrm d_0^{\mathrm{Ph}}(x_{\mathrm{Ph}}, z_{\mathrm{Ph}})=\log{\frac{(z_{\mathrm{Ph}})^2+(z_{\mathrm{Ph}})^2}{z_{\mathrm{Ph}}}}$,
\[\mathrm d_0^{\mathrm{BCK}}(x_{\mathrm{BCK}}, z_{\mathrm{BCK}})=
\log\left(\frac{1+z_{\mathrm{BCK}} }{\sqrt{1-(x_{\mathrm{BCK}})^2-(z_{\mathrm{BCK}})^2}}\right)\]
can be used. (Then $\mathrm d_0^{\mathrm{BCK}}(V_{\mathrm{BCK}})=0$.)
Here the $h$-horocycle $C$ is the one through
$P_{\mathrm{BCK}}
=\left(0,\frac{(\cos\beta)^2}{2-(\cos\beta)^2}\right)
$
and $(0,-1)$.

First, we consider the inequality \eqref{eq:monsa} which is implied by \eqref{eq:onsa}.
Now, \eqref{eq:monsa} is fully equivalent to
\[
\cosh\bigl(\mathrm d(\boldsymbol x,\Lambda)\bigr)-
\cosh\bigl(\mathrm d_0(\boldsymbol x)+\mathrm d_0(\Lambda)-2\mathrm d_0(V)\bigr)\leq0.
\]
This, in turn, is fully equivalent to
\[\Bigl(\cosh\bigl(\mathrm d(\boldsymbol x,\Lambda)\bigr)-
\cosh\bigl(\mathrm d_0(\boldsymbol x)+\mathrm d_0(\Lambda)-2\mathrm d_0(V) \bigr)\Bigr)
\cdot{\exp(\mathrm d_0(\boldsymbol x)  - \mathrm d_0(\Lambda))}\leq0.\]

Using our special setup for $S_\beta$, however, this transcribes as
 \begin{equation}
\frac1{2(\sin\beta)^2}\cdot
\frac{(x_{\mathrm{BCK}})^2+2(\cos\beta)^2(z_{\mathrm{BCK}})^2 +2(\cos\beta)^2 z_{\mathrm{BCK}}  }{
  1-(x_{\mathrm{BCK}})^2-(z_{\mathrm{BCK}})^2 }\leq0.
   \plabel{eq:fonsa}
\end{equation}

If $\beta\in(0,\pi/2)$, then \eqref{eq:monsa}--\eqref{eq:fonsa}
 is fully equivalent to \eqref{eq:CRS}, regarding ordinary $h$-points.
 Now, \eqref{eq:CRS} implies $\mathrm d_C(\boldsymbol x)<0$.
Indeed the solution set is a subset of $\DW^{[2]}_{\mathrm{BCK}}(S_0)$, which, in turn,
 is contained in the interior of the horodisk of $C$.
Therefore \eqref{eq:CRS} fully implies (that is with equality going to equality)
$\mathrm d(\boldsymbol x,\Lambda) \leq -\mathrm d_C( \boldsymbol x)$, i.~e.~\eqref{eq:onsa}.
This shows that $\eqref{eq:CRS}$ and $\eqref{eq:onsa}$ and  $\eqref{eq:monsa}$ are fully equivalent.

If $\beta=\pi/2 $, then \eqref{eq:monsa}--\eqref{eq:fonsa} yields the line $x_{\mathrm{BCK}}=0$.
But then it is easy to check that how the more restrictive $\mathrm d(\boldsymbol x,\Lambda) +\mathrm d_C( \boldsymbol x)\leq0$
   only halves this line after that.
\end{proof}

This, in particular, proves that our (analytic) $h$-elliptical parabolas are synthetic $h$-elliptical parabolas.
In fact, in light of Remark \ref{rem:CRall}, it proves that all (analytic) $h$-elliptical are synthetic $h$-elliptical parabolas.

Assuming more familiarity with $h$-conics, the statement above can be obtained in another way:
\begin{proof}[Proof with  preliminary knowledge on $h$-elliptic parabolas.]
We start the proof as before, reducing to $A=S_\beta$.
Suppose that $s^-$ is the length of the minor semi-axis (or rather half of the length of the minor axis).
Then one can easily check that
\begin{equation}
\exp\mathrm  d(V,\Lambda)\equiv\exp\left(\mathrm  d_0(V)-\mathrm d_0(\Lambda)\right) = \cosh s^-.
\plabel{eq:CRER2}
\end{equation}
We already know that the conformal range must be a (possibly degenerate) synthetic $h$-elliptic parabola.
We know that $h$-elliptic parabolas have the property \eqref{eq:CRER2} with the focus in the place of $\Lambda$.
We also know that the tip (or vertex) is $V=(0,0)_{\mathrm{BCK}}$, and $\Lambda$ is toward $\Lambda_0$ from it.
This makes $\Lambda$ equal to the focus.
\end{proof}

Now we have

\begin{proof}[Proof of Theorems \ref{thm:addCR1} and \ref{note:addCR1}.]
Theorems \ref{cor:CRER1} and \ref{cor:CRER2} establish the synthetic interpretations in cases (i)--(ii)/(vi)--(vii) and (iv)/(ix)
 of Theorem \ref{thm:CRdonc}.
More generally, considering the asymptotic points as foci is mainly a matter of convention but one which fits well
 to various degenerating and limiting cases.
The metrical data are already contained in Theorem \ref{rem:recoverBCK} (in less compact form) and in Theorem \ref{lem:eigdist}.
\end{proof}

\begin{proof}[Proof for Theorem \ref{thm:addCR2}]
The statement is easier to see in the pCK model.
Having the foci fixed, it easy to see that we have smaller or larger geometric objects associated to them; one is contained in the other.
It is also clear that the supremum in $z_{\mathrm{pCK}}$ is the square of the norm,
 which strictly grows as the associated geometric objects become fatter.
(It ranges from the minimal $\max(|\lambda_1|^2,|\lambda_2|^2)$ toward $+\infty$.)
The transcription to the BCK model is straightforward.
\end{proof}
\begin{remark}
For $h$-circles, $h$-horospheres, $h$-distance bands, proper $h$-ellipses, $h$-elliptic parabolas
(i.~e.~in the non-normal cases), the
 $h$-foci from the quadric (of the boundary) can be obtained in the projective models as follows:
Assume that $Q$ is the quadric, let $G$ be its dual quadric (on the dual projective plane);
let $Q_0$ be the quadric of the absolute, let $G_0$ be its the dual quadric.
Then the pencil generated by $G$ and $G_0$ contains a sole quadric $G_{\mathrm{spec}}$ with is for a pair of planes
(the lines may be different from each other or not).
The lines on the dual projective plane correspond to points in original plane; these will be the $h$-foci.

Indeed, we can check this in the BCK model.
The quadric of the absolute written in projective coordinates  $x_{\mathrm{BCK }},z_{\mathrm{BCK }},w_{\mathrm{BCK }}$ is
\[Q_0:\qquad {(x_{\mathrm{BCK }})^{2}}+{(z_{\mathrm{BCK }})^{2}} -{(w_{\mathrm{BCK }})^{2}} =0.\]
The dual quadric in the dual projective coordinates  $x'_{\mathrm{BCK }},z'_{\mathrm{BCK }},w'_{\mathrm{BCK }}$ is
\begin{equation*}
G_0:\qquad  {(x'_{\mathrm{BCK }})^{2}}+  {(z'_{\mathrm{BCK }})^{2}} - {(w'_{\mathrm{BCK }})^{2}}=0.
\end{equation*}

In the case of $\partial\DW^{\mathbb R}_{\mathrm{BCK}}(L_{\alpha,t})$ ($t>0$), the quadric is
\[Q:\qquad
{\left(\dfrac {\sqrt{1+t^2}}{\sqrt{(\cos\alpha)^2+t^2}}\right)^2}  {(x_{\mathrm{BCK }})^{2}}+
{\left(\dfrac{\sqrt{1+t^2}}{t}\right)^2}   {(z_{\mathrm{BCK }})^{2}} -  {(w_{\mathrm{BCK }})^{2}}=0.
\]
The dual quadric is
 \begin{equation}G:\qquad
{\left(\dfrac{\sqrt{(\cos\alpha)^2+t^2}}{\sqrt{1+t^2}}\right)^2}  {(x'_{\mathrm{BCK }})^{2}}+
{\left(\dfrac{t}{\sqrt{1+t^2}}\right)^2}   {(z'_{\mathrm{BCK }})^{2}} - {(w'_{\mathrm{BCK }})^{2}}=0.
\end{equation}
Then, one can see, for the sole pair of lines in the pencil of $G$ and $G_0$, we have
\begin{equation}G_{\mathrm{spec}}:\qquad
  (\cos\alpha)^2{(x'_{\mathrm{BCK }})^{2}}  - {(w'_{\mathrm{BCK }})^{2}}=0.
\end{equation}
This belong to the lines $\pm(\cos\alpha){x'_{\mathrm{BCK }}}  + {w'_{\mathrm{BCK }}}=0$;
 which correspond to the points $(\pm\cos\alpha,0)$.

In the case of $\partial\DW^{\mathbb R}_{\mathrm{BCK}}(S_{\beta})$ ($\beta\in[0,\pi/2)$), the quadric is
\[Q:\qquad
 \frac{(x_{\mathrm{BCK }})^{2}}{\left( \cos\beta\right)^2}+
 2{\left(z_{\mathrm{BCK }}  \right)^{2}}+ 2z_{\mathrm{BCK }}w_{\mathrm{BCK }}=0.
\]
The dual quadric in the dual projective coordinates  $x'_{\mathrm{BCK }},z'_{\mathrm{BCK }},w'_{\mathrm{BCK }}$ is
 \begin{equation}G:\qquad
  {\left( \cos\beta\right)^2}{(x'_{\mathrm{BCK }})^{2}}+
  2z'_{\mathrm{BCK }} w'_{\mathrm{BCK }} -2(w'_{\mathrm{BCK }})^2  =0.
\end{equation}
Then, one can see, for the sole pair of lines in the pencil of $G$ and $G_0$, we have
\begin{align*}G_{\mathrm{spec}}:\qquad&
  (\cos\beta)^2  (z'_{\mathrm{BCK }})^2-2z'_{\mathrm{BCK }}w'_{\mathrm{BCK }} +(2-(\cos\beta)^2) (w'_{\mathrm{BCK }})^2\equiv
 \\
  \qquad
  &(z'_{\mathrm{BCK }}-w'_{\mathrm{BCK }})( (\cos\beta)^2z'_{\mathrm{BCK }}+((\cos\beta)^2-2)w'_{\mathrm{BCK }} )=0.
\end{align*}
This belongs to the lines $ -{z'_{\mathrm{BCK }}}  + {w'_{\mathrm{BCK }}}=0$ and
$ \frac{(\cos\beta)^2}{(\cos\beta)^2-2}{z'_{\mathrm{BCK }}}  + {w'_{\mathrm{BCK }}}=0$;
which correspond to the points $(0,-1)$ and  $\left(0,\frac{(\cos\beta)^2}{(\cos\beta)^2-2}\right)$.

Having checked these cases, projective invariance takes care for the rest.
\qedremark
\end{remark}

\begin{remark}
In fact, point ellipses can be considered as pairs of imaginary lines.
In that way they, in the situation above, they lead to imaginary focus points.
(This leads to potentially three pairs of foci, real or imaginary.)
This viewpoint is present in the theory of hyperbolic conics from the very beginning, see Story \cite{Sto}.

If we had adapted  the kind of analytic definition for the foci in the fashion of the remark above, from the beginning,
 then the synthetical identification of the foci of this section would have been not necessary.
\qedremark
\end{remark}

\begin{remark}
In the case of the conformal range of $A$, the dual quadric $G$ can be defined even if $A$ is normal.
Indeed, $G$ can be defined as $G_{\mathrm{spec}}$, which should be the product of equations of lines
 on the dual projective space corresponding the $h$-eigenpoint.
Ultimately, the dual quadric is the most convenient way to numerically code the conformal range of $A$;
 just like in the case of the numerical range.
This is not surprising if one is familiar with the Kippenhahnian view of the numerical range.
\qedremark
\end{remark}

\snewpage
\section{Comparison of the elliptical range theorems}
\plabel{sec:com}
The elliptic range theorem for the numerical range and for the Davis--Wielandt shell
 are quite close to each other.
Regarding their shape, there are $2\times2$ cases depending on (non)normality and on (non)parabolicity.
In the case of the numerical range, the four types are (i) point, (ii) segment, (iii) disk, (iv) proper elliptical disk.
In the case of the  Davis--Wielandt shell, the four types are
 (i) asymptotic $h$-point, (ii) asymptotically closed $h$-line, (iii) asymptotically closed horosphere, (iv) asymptotically closed $h$-tube.
These cases correspond to each other.
In contrast, the $2\times 5$ cases for the shape of the conformal range are more complicated.

In higher dimensions, the Davis--Wielandt shell contains more or equal information than
 either the numerical range or the conformal range.
In dimension $2$, the Davis--Wielandt shell and the numerical range contain the same
 amount of information, while the conformal range contains a reduced amount of information.
 Indeed, in the case of the numerical range, the normal form \eqref{eq:canon2} can be
 be recovered immediately; $\lambda_1$ and $\lambda_2$ as the foci, $t$ from the length of the minor axis.
This means that the numerical range is unique up to unitary conjugation of $2\times2$ matrices.
Obviously, the Davis-Wielandt shell also cannot contain more information than up to unitary conjugation.
Thus they are equivalent in the $2\times 2$ case.
However, there is a loss of information in the case of conformal range, as it folds the
real-elliptic / quasielliptic and complex-parabolic / quasihyperbolic cases together.
(This is why we have $2\times 5$ and not $2\times7$ many cases for the shape.)

Therefore, in the $2\times2$ case, one can argue that the conformal range is both more complicated and primitive than
 the numerical range and  the Davis--Wielandt shell.
The main question is, however, not the information contained (as the matrix $A$ itself contains even more),
 but that how the ranges can be utilized.
In that regard, for the conformal range, see \cite{L2}.
\snewpage

\end{document}